\documentclass[reqno]{amsart}
\usepackage{amsmath, amsthm, amssymb, amstext}
\usepackage[foot]{amsaddr}
\usepackage[left=2cm,right=2cm,top=1.5cm,bottom=1.5cm,includeheadfoot]{geometry}
\usepackage{hyperref,xcolor}
\hypersetup{
	pdfborder={0 0 0},
	colorlinks,
}
\usepackage{todonotes}
\DeclareUnicodeCharacter{211D}{$\mathbb{R}$}
\usepackage[backend=biber,style=numeric]{biblatex}
\addbibresource{references.bib} 
\usepackage{enumitem}
\setlength{\parindent}{1.2em}

\usepackage{comment}
\newtheorem{theorem}{Theorem}
\newtheorem{remark}[theorem]{Remark}
\newtheorem{lemma}[theorem]{Lemma}
\newtheorem{proposition}[theorem]{Proposition}
\newtheorem{corollary}[theorem]{Corollary}
\newtheorem{definition}[theorem]{Definition}



              %
              %
              %
              %
            %
          %
          %
         %
         %
         %
         %
         %
         %
         %
         %
         %
         %

\newcommand{\be} {\begin{equation}}
\newcommand{\ee} {\end{equation}}
\newcommand{\bea} {\begin{eqnarray}}
\newcommand{\eea} {\end{eqnarray}}
\newcommand{\Bea} {\begin{eqnarray*}}
\newcommand{\Eea} {\end{eqnarray*}}

\newcommand{\al} {\alpha}
\newcommand{\ba} {\beta}

\newcommand{\De} {\Delta}
\newcommand{\la} {\lambda}

\newcommand{\e} {\epsilon}
\newcommand{\M}{m_{\alpha}(d_1,d_2)}
\newcommand{\R}{{\mathbb R}}

\newcommand{\dx}{\,{\rm d}x}
\newcommand{\dy}{\,{\rm d}y}

\newcommand{\F}{{\mathcal F}}
\newcommand{\ra} {\rightarrow}

\newcommand{\2} {{2^*_{\mu,s}}}
\newcommand{\tw}{2_{\mu,*}}

\newcommand{\Aa}{\mathcal{A}}
\newcommand{\s}{\frac{s}{2}}
\newcommand{\tri}{(-\Delta)}
\newcommand{\h}{\mathcal{H}}
\newcommand{\Pa}{ P_{\alpha}}
\newcommand{\dpq}{\delta_p +\delta_q}
\newcommand{\dg}{\tilde{d_1}}
\newcommand{\df}{\tilde{d_2}}
\newcommand{\teo}{t_{\epsilon,0}}
\newcommand{\we}{w_{\epsilon}}
\newcommand{\shl}{S_{HL}}
\newcommand{\f}{\Phi_{\alpha}}
\newcommand{\J}{\mathcal{J}_{\alpha}}
\newcommand{\cq}{\int_{\R^N}(I_{\mu}*|u|^{2^*_{\mu,s}})|v|^ {2^*_{\mu,s}} }
\newcommand{\cqt}{\int_{\R^N}(I_{\mu}*|u|^{3})|v|^ {3} }
\newcommand{\cpq}{\int_{\R^N}(I_{\mu}*|u|^{p})|v|^{q} }
\newcommand{\D}{(d_1,d_2)}
\newcommand{\cnq}{\int_{\R^N}(I_{\mu}*|u_n|^{p})|v_n|^{q} }
\newcommand{\cqn}{\int_{\R^N}(I_{\mu}*|u_n|^{3})|v_n|^{3} }

\newcommand{\tcqn}{\int_{\R^N}(I_{\mu}*|\tilde{u}_n|^{3})|\tilde{v}_n|^{3} }
\newcommand{\uvn}{(u_n,v_n)}

\newcommand{\twe}{t_{\epsilon,\alpha}}

\numberwithin{theorem}{section} \numberwithin{equation}{section}

\title{Normalized solution to Kirchhoff-fractional system involving  critical Choquard nonlinearity}
\author{divya goel, Shilpa Gupta, Asmita Rai}
\address{Divya Goel \newline
	Department of Mathematical Sciences, Indian Institute of Technology (BHU), Varanasi, 221005, India.}  
\email{divya.mat@iitbhu.ac.in}

\address{Shilpa Gupta  \newline
	Department of Mathematics and Statistics, Indian Institute of Technology Kanpur, Kanpur, 208016, India}
\email{shilpagupta890@gmail.com}
\address{Asmita Rai \newline
	Department of Mathematical Sciences, Indian Institute of Technology (BHU), Varanasi, 221005, India.}  
\email{asmita.rai65@gmail.com}
\begin{document}

\begin{abstract}
In this article, we explore  the fractional Kirchhoff-Choquard system given by
\begin{equation*}
\begin{cases}
(a+b\int_{\mathbb{R}^N}|\tri^{\frac{s}{2}} u|^2\dx)\tri^su=\lambda_1u+(I_{\mu}*|v|^{\2})|u|^{\2-2}u +\alpha p (I_{\mu}*|v|^{q})|u|^{p-2}u  ~~\text{in}~\mathbb{R}^N,\\
(a+b\int_{\mathbb{R}^N}|\tri^{\frac{s}{2}} v|^2\dx)\tri^sv=\lambda_2v+
(I_{\mu}*|u|^{\2})|v|^{\2-2}u +\alpha q(I_{\mu}*|u|^{p})|v|^{q-2}v ~~\text{in}~\mathbb{R}^N,\\
\int_{\mathbb{R}^N}|u|^2=d_1^2,~~\int_{\R^N}|v|^2=d_2^2.
\end{cases}
\end{equation*}
where  $ N> 2s,~ s \in (0,1),~ \mu \in (0, N),~\alpha \in\R$. Here, $ I_{\mu}:\R^N \to \R$ denotes the Riesz potential.  We denote by $2_{\mu,*}:=\frac{2N-\mu}{N}$ and $\frac{2N-\mu}{N-2s}:=\2$, the lower and upper Hardy–Littlewood–Sobolev critical exponents, respectively, and assume that $2_{\mu,*} < p, q< \2$. Our primary focus is on the existence of normalized solutions for the case $\alpha>0$ in two scenarios: the $ L^2-$subcritical case characterized by $2\tw<p + q < 4 + \frac{4s-2\mu}{N}$  and $L^2$-supercritical associated with $4+\frac{8s-2\mu}{N}< p + q < 2\2$.\\

\noindent \textbf{Key words:} Kirchhoff-type system, Fractional coupled Choquard system, Normalized ground states, critical exponents, Variational methods, Pohozaev manifold
\medskip

\noindent \textbf{2020 Mathematics Subject Classification:} 35B33, 35J50, 35J60, 35R11, 45K05
\end{abstract}
\maketitle
\section{Introduction}
The motivation of the problem studied in the paper is to find a stationary wave solution to the following model
\begin{equation*}   \begin{cases}
\displaystyle \left(M(\|\tri^{\s}\phi_1\|^2_2) \right) (-\Delta)^s \phi_1 = \iota \frac{\partial \phi_1}{\partial t} +(I_{\mu}*|\phi_2|^{2^*_{\mu,s}})|\phi_1|^{2^*_{\mu,s}-2}\phi_1 +\alpha p (I_{\mu}*|\phi_2|^{q})|\phi_1|^{p-2}\phi_1 \; \text{in} \; \R^N,\\
 \left(M(\|\tri^{\s}\phi_2\|^2_2) \right) (-\Delta)^s \phi_2 = \iota \displaystyle\frac{\partial \phi_2}{\partial t} +(I_{\mu}*|\phi_1|^{2^*_{\mu,s}})|\phi_2|^{2^*_{\mu,s}-2}\phi_2 +\alpha q (I_{\mu}*|\phi_1|^{p})|\phi_2|^{q-2}\phi_2 \; \text{in} \; \R^N,\\
 \phi_1 (x,t),~\phi_2 (x,t) \ra 0 \text{ as } |x|\ra \infty,
\end{cases}
\end{equation*}
where $M(t):=a+bt$ is known as Kirchhoff function. The function $\phi_1,~\phi_2$ are the
wave function components, and the mass of each represents the
number of particles of each component in the mean-field models of Bose-Einstein condensation, see \cite{bagnato2015bose,frantzeskakis2010dark} and references therein. The real numbers $\la_1,~\la_2$ and $\al$ denote
the intraspecies and interspecies scattering lengths, describing respectively the interaction
between particles of the same component or of different components. Precisely, the
positive sign of  $\la_i$ and $\al$ is for attractive interaction, while the negative sign is for repulsive interaction. The standing wave solution of the above problem is identified with the following ansatz
\begin{equation*}
    \phi_1= e^{\iota \la_1 t} u, \quad 
    \phi_2 = e^{\iota \la_2 t} v  \text{ for } \la_1, \la_2 \in \R.
\end{equation*}
This ansatz  leads to the following
elliptic system for the densities  $u$ and $v$
\begin{equation*}
\begin{cases}
(a+b\int_{\mathbb{R}^N}|\tri^{\frac{s}{2}} u|^2\dx)\tri^su=\lambda_1u+(I_{\mu}*|v|^{\2})|u|^{\2-2}u +\alpha p (I_{\mu}*|v|^{q})|u|^{p-2}u  ~~\text{in}~\mathbb{R}^N,\\
(a+b\int_{\mathbb{R}^N}|\tri^{\frac{s}{2}} v|^2\dx)\tri^sv=\lambda_2v+
(I_{\mu}*|u|^{\2})|v|^{\2-2}u +\alpha q(I_{\mu}*|u|^{p})|v|^{q-2}u ~~\text{in}~\mathbb{R}^N,\\
\int_{\mathbb{R}^N}|u|^2=d_1^2,~~\int_{\R^N}|v|^2=d_2^2,
\end{cases}
\end{equation*}
where $ N> 2s,~ s \in (0,1),~ \mu \in (0, N),~\alpha \in\R$, $\lambda_1,~\lambda_2$ appears as Lagrange multiplier, $2_{\mu,*} :=\frac{2N-\mu}{N}  < p, ~q <\frac{2N-\mu}{N-2s}=:\2$, $I_{\mu}:=|x|^{-\mu}$ is a Riesz potential of order $\mu \in (0,N)$ 
and the operator $\tri^s$ is fractional Laplacian defined by 
\begin{equation*}
\tri^su(x)=P.V.\int_{\R^N}\frac{u(x)-u(y)}{|x-y|^{N+2s}}\dy
\end{equation*}
where $ P.V. $ represents the Cauchy principal value of the integral. For more details on the operator, one can refer \cite{di2012hitchhikers}. 

The study of doubly nonlocal systems with fixed frequencies $\lambda_{1}$ and $\lambda_{2}$ has been extensively developed over the past two decades. For instance, Guo, Luo, and Zou \cite{guo2017critical} investigated the existence of ground state solutions for a critical system involving the fractional Laplacian with prescribed frequencies $\lambda_{1}$ and $\lambda_{2}$. Related contributions can be found in several other works \cite{ambrosio2020multiplicity,fiscella2018p}, where various aspects of such systems have been explored under different settings. On the other hand, when the $L^{2}$-norm of the unknown functions $u$ and $v$ is fixed, the parameters $\lambda_{1}$ and $\lambda_{2}$ arise naturally as Lagrange multipliers. In this framework, any solution obtained with prescribed mass is referred to as a \emph{normalized solution}. From a physical perspective, the prescribed mass condition embodies the principle of conservation of mass, which underlines the particular significance of normalized solutions in the study of these systems.
There is a lot of work on Normalized solution from last decade.  
We introduce some results about the existence of normalized solutions to the
semilinear Schrödinger equation
\begin{equation*} 
(-\Delta)^s u = \lambda u + h(u) \quad \text{in } \mathbb{R}^N. 
\end{equation*}
 In the local case ($s=1$),  $ h(u) = |u|^{p-2}u$, the associated energy functional takes the form:
\begin{equation*}
E(u) = \frac{1}{2} \int_{\mathbb{R}^N} |\nabla u|^2 \, dx - \frac{1}{p} \int_{\mathbb{R}^N} |u|^p \, dx.
\end{equation*}
For $2+\frac{4}{N}<p<2^* :=\frac{2N}{N-2}$, Jeanjean~\cite{jeanjean1997existence} constructed a variational framework involving a mountain pass geometry on a constraint manifold, and through this structure, combined with compactness arguments, proved the existence of normalized solutions. Later, Jeanjean and Lu~\cite{jeanjean2020mass} extended these results to show infinitely many radial solutions using a minimax principle. 

In the recent past,  Soave~\cite{soave2020normalized1} studied normalized solutions in the presence of Sobolev critical growth and mixed nonlinearities of the form  
$h(u) = \alpha |u|^{q-2}u + |u|^{p-2}u,$ 
with $ \alpha \in \mathbb{R},~ 2 < q \le 2 + \frac{4}{N} \le p < 2^* $. His work comprehensively classified the problem in terms of $ L^2$-subcritical, critical, and supercritical regimes. He interplays with $L^2$ subcritical and $L^2$ supercritical terms and proved the multiplicity of solutions. Subsequently, in a follow-up paper~\cite{soave2020normalized2}, Soave also addressed the critical case. Due to the lack of compactness in the Sobolev embedding $ H^1(\mathbb{R}^N) \hookrightarrow L^{2^*}(\mathbb{R}^N) $, this problem is more challenging. He navigated these difficulties by drawing from techniques used in~\cite{brezis1983positive,jeanjean1997existence}. He obtained a constrained Palais–Smale sequence with an additional property by studying the geometry of the corresponding Pohozaev manifold, and he proved the compactness of this special constrained Palais–Smale sequence under some energy level.

Motivated by these developments, several researchers have extended the theory to incorporate the fractional Laplacian, thereby broadening the scope of the normalized solution framework. The initial results on the existence of such solutions under the condition that the associated energy functional is bounded from below were established in \cite{bhattarai2017fractional,feng2019existence}. A significant advancement was made by Yang \cite{yang2020normalized}, who, for the first time, proved the existence of normalized solutions even when the energy functional is unbounded from below. Continuing along this direction, Zhen and Zhang \cite{zhen2022normalized} employed Jeanjean’s variational technique to obtain existence results for normalized solutions in the presence of mixed nonlinearities.
The work on the nonlinear Schrödinger system, focused on the existence of normalized ground solutions under mass constraints with coupled nonlinearity, is gaining attention among researchers these days. In \cite{bartsch2016normalized}, Bartsch, Jeanjean, and Soave studied the following system 
\begin{equation}\label{a}
    \begin{cases}
        -\Delta u + \lambda_1 u = \mu_1 |u|^{l_1 - 2} u + \nu \alpha |u|^{\alpha - 2} |v|^{\beta} u, & \text{in } \mathbb{R}^N, \\[6pt]
        -\Delta v + \lambda_2 v = \mu_2 |v|^{l_2 - 2} v + \nu \beta |v|^{\beta - 2} |u|^{\alpha} v, & \text{in } \mathbb{R}^N, \\[6pt]
        \displaystyle\int_{\mathbb{R}^N} |u|^2 \, dx = d_1^2,\quad 
        \displaystyle\int_{\mathbb{R}^N} |v|^2 \, dx = d_2^2.
    \end{cases}
\end{equation}
with  $l_1=4,~l_2=4$, $\al=2,~\beta=2$ and $N=3$ proved the existence
of positive normalized solutions for different ranges of the coupling parameter $\nu > 0$, without any assumption on the masses. Later \cite{bartsch2019multiple}, Bartsch and Soave proved the existence of infinitely many normalised solutions of \eqref{a} 
 by using the Krasnoselskii genus approach for $d_1=d_2$ and $\mu_1=\mu_2$. For the Sobolev critical nonlinearity, that is, $l_1=l_2=2^*$, and $\alpha+\beta<2^*$,  Bartsch-Li-Zou \cite{bartsch2023existence} proved the existence and asymptotic properties of normalized ground states of \eqref{a}. 
 Zhang and Han \cite{zhang2024normalized} deal with the system in the Sobolev critical case of coupled nonlinearities, i.e., when $\alpha+\beta=2^*$. However, their main focus was on the $L^2$-subcritical case, i.e., $2\le l_1,~l_2<2+\frac{4}{N}$. For more details on normalized solutions for systems, one can refer to \cite{MENG2025113845}.

Of course, a great deal of work has focused on the normalized solution of  Schr\"{o}dinger systems involving the fractional Laplacian.  In particular, we highlight that  Lui and Li \cite{liu2023mass}  obtained the
existence of normalized solutions  for the  following system    under the assumption   when $N>2,~\frac{1}{2}\le s<1,$ and $2+4s/N<p,~q,~r_1+r_2<2^*_s:=2N/(N-2s)$ 
   \begin{equation}\label{as}
\begin{cases}
(-\Delta)^s u = \lambda_1 u + \mu_1 |u|^{p-2} u + \beta r_1 |u|^{r_1-2} u |v|^{r_2}, & \text{in } \mathbb{R}^N, \\[6pt]
(-\Delta)^s v = \lambda_2 v + \mu_2 |v|^{q-2} v + \beta r_2 |u|^{r_1} |v|^{r_2-2} v, & \text{in } \mathbb{R}^N,\\[4pt]
\displaystyle\int_{\mathbb{R}^N} |u|^2 \dx = d_1^2, \quad \int_{\mathbb{R}^N} |v|^2 \dx = d_2^2. 
\end{cases}
\end{equation}
 Authors also established the non-existence in the case when $p=q=rd_1+r_2=2^*_s.$

Recently, Chen and Yang \cite{chennormalized} studied the following fractional Choquard system with the Sobolev critical exponent
   \begin{equation}\label{as1}
\begin{cases}
(-\Delta)^s u + \lambda_1 u = (I_\mu * |u|^p)|u|^{p-2}u + \frac{\beta r_1}{2 s} |u|^{r_1-2}u |v|^{r_2}, & \text{in } \mathbb{R}^N, \\[4pt]
(-\Delta)^s v + \lambda_2 v = (I_\mu* |v|^q)|v|^{q-2}v + \frac{\beta r_2}{2 s} |u|^{r_1} |v|^{r_2-2} v, & \text{in } \mathbb{R}^N, \\
\displaystyle\int_{\mathbb{R}^N} |u|^2 \dx = d_1^2, \quad \int_{\mathbb{R}^N} |v|^2 \dx = d_2^2,
\end{cases}
\end{equation}
  $2_{\mu,*}<p,~q<\2$ and $r_1+r_2=2_s^*$. They focus on ground state and mountain pass type solutions for the Sobolev critical coupling term. For the subcritical Choquard growth, we refer the readers to \cite{chennormalized,guo2024normalized2,guo2024normalized,wang2021existence} and the references therein. However, for the critical coupled nonlinearity, so far we have found only one paper \cite{zhang2023normalized}  dealing with double-coupled Choquard nonlinearity. Precisely, Zhang, Zhang, and Zhong \cite{zhang2023normalized}  extended the results of  \cite{chennormalized} for the coupled critical Choquard nonlinearity for $s=1$ and obtained the ground state solutions for $p+q\leq 4+\frac{4-2\mu}{N}$. While for the $L^2$ super-critical case, the authors prove the existence of a mountain pass type solution. 
\\
 In \cite{yang2021normalized}, Yang studied the following  Kirchhoff system, 
\begin{equation}
\begin{cases}
-(a_1+b_1\int_{\R^N}|\nabla u|^2)\Delta u+\lambda_1u=\mu_1|u|^{p-2}u+r_1\beta|u|^{r_1-2}|v|^{r_2}u&\text{in}~\R^N,\\[4pt]
-(a_2+b_2\int_{\R^N}|\nabla v|^2)\Delta v+\lambda_2v=\mu_2|v|^{q-2}v+r_2\beta|v|^{r_2-2}|u|^{r_1}v&\text{in}~\R^N,\\
\displaystyle\int_{\R^N}|u|^2=d_1^2~~\int_{\R^N}|v|^2=d_2^2.
\end{cases}
\end{equation}
He derives the existence of a solution for the case when $2+\frac{8}{N}<r_1+r_2<p,~q<2^*$. For the case where $r_1+r_2=2^*$, Zhang and Zhang \cite{zhang2025normalized} prove the existence of a solution, but only for the case when $2\le p,~q<2+\frac{8}{N}$ for any $N>5$. For more work one can refer \cite{cao2017existence,KongChen2022}. 

   To the best of our knowledge, all the works which have been done on the Kirchhoff system are mostly of  $s=1$, except for a few. In \cite{li2024existence}, researchers extended the results of 
\cite{yang2021normalized} for the fractional case, but prove the existence of normalized solutions only when nonlinearity has subcritical Sobolev growth. In this article, we proved the existence of ground state solutions for any $N>2s$ with perturbation of $L^2$-subcritical growth.  While for $L^2$- supercritical growth we prove the existence of mountain pass theorem. These types of results for the Kirchhoff operator are proved in \cite{li2022normalized} for a single equation. However, for systems of equations, this question has remained open. In this paper, we answer this problem positively.  Taking motivation from the above papers here we took the coupled perturbation and coupled critical nonlinearity in the sense of Hardy-Littlewood-Sobolev exponent. Precisely, we studied the following problem 
   \begin{equation*}    
   \begin{cases}
\displaystyle \left(M(\|\tri^{\s}u\|^2_2) \right) (-\Delta)^s u = \lambda_1u +(I_{\mu}*|v|^{2^*_{\mu,s}})|u|^{2^*_{\mu,s}-2}u +\alpha p (I_{\mu}*|v|^{q})|u|^{p-2}u \; \text{in} \; \R^N,\\[4pt]
 \left(M(\|\tri^{\s}v\|^2_2) \right) (-\Delta)^s v = \lambda_2v +(I_{\mu}*|u|^{2^*_{\mu,s}})|v|^{2^*_{\mu,s}-2}v +\alpha q (I_{\mu}*|u|^{p})|v|^{q-2}v \; \text{in} \; \R^N,\\[4pt]
\displaystyle\int_{\mathbb{R}^N}|u|^2=d_1^2,~~\int_{\R^N}|v|^2=d_2^2,
\tag{\(F_d\)}
\label{G}
  \end{cases}
\end{equation*}
where  $N>2s,~s \in (0,1),~\mu \in (0,N), ~\al>0,~$ $2_{\mu,*} :=\frac{2N-\mu}{N}  < p, ~q <\frac{2N-\mu}{N-2s}=:\2$ and $\lambda_1,~\lambda_2 \in \R$ and $M(t):=a+bt,~a,~b >0$. 
This article is the first to prove the existence of normalized solutions in the double critical regime (involving both Kirchhoff's nonlocal term and the HLS critical exponent for the nonlinearity). Here, we used the unified techniques to prove the boundedness and strong convergence of the Palais-Smale sequences.  Further to achieve the goal, we did the 
analysis of the fine energy estimates, which includes the minimizers of the best constant $S_{HL}$  (See Lemma \ref{lemma_5.6}). Subsequently, we used the minimization technique and the mountain pass theorem to prove the existence result. We like to mention that the results we prove in this article are new for $s=1$.

With this introduction, we would like to state our main results 
 \begin{theorem}\label{AAA}
Let $22_{\mu,*}<p+q<4+\frac{4s-2\mu}{N}$ and $0<\mu<min\{\alpha_*,\alpha^*\,\alpha_*^*\}$. Then
\begin{itemize}
	\item[(i)] $\J|_{S\D}$ admits a critical point  $(\tilde{u},\tilde{v})$ at an energy level $m_{\alpha}\D<0$. This critical point is a local minimizer of $\J$ on the set 
	$$\mathcal{B}_{R^*}:=\{(u,v)\in S\D:([u]_s^2+[v]_s^2)^{1/2}<R^* \},$$ 
for some fixed $R^*>0$. Moreover, this solution is a ground state of $\J|_{S\D}$, and any ground
state of $\J|_{S\D}$ is a local minimizer of $\J$ on $\mathcal{B}_{R^*}$;
	\item[(ii)] The critical point $(\tilde{u},\tilde{v})$ solves \eqref{G} for some $\tilde{\lambda}_1,~\tilde{\lambda}_2<0$, and it is positive and radially decreasing.
    \item[(iii)] If $(\tilde{u},\tilde{v})\in S\D$ is a ground state for $\J|_{S\D}$, then as $\alpha\to 0^+$ we have $m_{\alpha}\D\to0,~([\tilde{u}]_s^2+[\tilde{v}]_s^2)\to0$.
\end{itemize} 	
 \end{theorem}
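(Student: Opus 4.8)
The plan is to obtain $(\tilde u,\tilde v)$ as a constrained local minimizer, in the spirit of Soave's fibering analysis adapted to the fractional Kirchhoff--Choquard functional $\J$. The central object is the fiber map $g_{(u,v)}(t):=\J\big(t\star(u,v)\big)$ associated with the mass--preserving dilation $t\star(u,v):=\big(t^{N/2}u(t\cdot),\,t^{N/2}v(t\cdot)\big)$, which keeps $t\star(u,v)\in S\D$ while acting on the Gagliardo seminorm by $[t\star u]_s^2=t^{2s}[u]_s^2$. Under this scaling one computes
\begin{equation*}
g_{(u,v)}(t)=\tfrac{a}{2}\,t^{2s}A+\tfrac{b}{4}\,t^{4s}B-C\,t^{2s\cdot\2}-\alpha\,D\,t^{\gamma},\qquad \gamma=\tfrac{N(p+q)}{2}+\mu-2N,
\end{equation*}
where $A=[u]_s^2+[v]_s^2$, $B=[u]_s^4+[v]_s^4$, and $C,D>0$ collect the critical Choquard integral and the coupling integral $\cpq$. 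The hypothesis $2\tw<p+q<4+\frac{4s-2\mu}{N}$ is precisely what yields $0<\gamma<2s$, so the coupling term is $L^2$--subcritical; since $\gamma$ is strictly smaller than each of $2s$, $4s$ and $2s\cdot\2$, we get $g_{(u,v)}(t)\to0^-$ as $t\to0^+$ with $g_{(u,v)}$ decreasing near the origin, producing a strict negative local minimum. First I would convert this into a statement on balls: using the fractional Gagliardo--Nirenberg inequality to estimate the coupling term $\alpha\cpq$ by $C\alpha\rho^{\gamma/s}$ with $\gamma/s<2$, and the Hardy--Littlewood--Sobolev inequality to bound the critical term by $C\rho^{2\cdot\2}$ (here $\rho^2=[u]_s^2+[v]_s^2$), I would exhibit a radius $R^*>0$, for $\alpha$ in the admissible range, such that $\J$ is bounded below on $\mathcal{B}_{R^*}$ and
\begin{equation*}
m_{\alpha}\D:=\inf_{\mathcal{B}_{R^*}}\J<0<\inf_{\partial\mathcal{B}_{R^*}}\J.
\end{equation*}
The strict gap is the key: it confines every minimizing sequence to the interior of the ball.

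Next I would extract from the minimization problem a Palais--Smale sequence for $\J|_{S\D}$ at the level $m_{\alpha}\D$, additionally (asymptotically) on the Pohozaev constraint, by applying Ekeland's variational principle on $\mathcal{B}_{R^*}$ (legitimate since the infimum is interior and strictly below the boundary value) and, to tame translations, first restricting to symmetric decreasing functions. As $\mathcal{B}_{R^*}$ is bounded in seminorm and the masses are prescribed, the sequence is bounded in the fractional Sobolev space, so up to a subsequence $(u_n,v_n)\rightharpoonup(\tilde u,\tilde v)$ with $(\tilde u,\tilde v)\in\overline{\mathcal{B}_{R^*}}$. \emph{The main obstacle is upgrading this weak convergence to strong convergence in the presence of the Hardy--Littlewood--Sobolev critical term, where the embedding into the critical Choquard space is not compact.} To resolve it I would use a Brezis--Lieb type splitting for the nonlocal terms together with the sign $m_{\alpha}\D<0$: the energy of any nonvanishing concentrating (bubbling) part is bounded below by a positive constant determined by the best constant $\shl$ and its extremals (Lemma \ref{lemma_5.6}), which cannot be accommodated at the negative level $m_{\alpha}\D$ unless the defect is zero. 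This forces $[u_n-\tilde u]_s+[v_n-\tilde v]_s\to0$, hence $(\tilde u,\tilde v)\in S\D$ realizes $m_{\alpha}\D$ and is an interior minimizer. Because its energy equals $m_{\alpha}\D$ and the sublevel geometry pins ground states inside $\mathcal{B}_{R^*}$, it is a ground state and every ground state is such a local minimizer.

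For part (ii), interiority of $(\tilde u,\tilde v)$ means it is an honest critical point of $\J|_{S\D}$, so the Lagrange multiplier rule produces $\tilde\lambda_1,\tilde\lambda_2\in\R$ for which $(\tilde u,\tilde v)$ solves \eqref{G}. Testing the $u$-- and $v$--equations against $\tilde u$ and $\tilde v$ respectively, and combining with the Pohozaev identity encoded in $g_{(\tilde u,\tilde v)}'(1)=0$, I would express each multiplier through the seminorm, critical and coupling integrals; the negativity of $m_{\alpha}\D$ together with the $L^2$--subcritical sign of the coupling contribution then forces $\tilde\lambda_1,\tilde\lambda_2<0$. Positivity and radial monotonicity come from the rearrangement already exploited for compactness: passing to $(|\tilde u|,|\tilde v|)$ leaves the Choquard integrals unchanged and does not increase $[\,\cdot\,]_s$, while the Schwarz symmetrization $(\tilde u^*,\tilde v^*)$ does not increase the seminorms (fractional P\'olya--Szeg\H{o}), preserves the masses, and does not decrease the Choquard integrals (Riesz rearrangement inequality); hence the minimizer may be taken nonnegative and radially decreasing, and the strong maximum principle for $\tri^s$, applied with $\tilde\lambda_i<0$, upgrades nonnegativity to strict positivity.

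Finally, for part (iii) I would read off the asymptotics from the same fiber geometry. Fixing an admissible pair and minimizing the leading part $\tfrac{a}{2}At^{2s}-\alpha D t^{\gamma}$ of $g_{(u,v)}$ over $t$ gives an upper bound $m_{\alpha}\D\le -c\,\alpha^{\,2s/(2s-\gamma)}$ for some $c>0$, so $m_{\alpha}\D\to0^-$ as $\alpha\to0^+$. Conversely, the lower estimate on $\mathcal{B}_{R^*}$ yields $m_{\alpha}\D\ge\tfrac{a}{2}\rho_\alpha^2-C\alpha\rho_\alpha^{\gamma/s}\ge -C'\alpha^{\,2s/(2s-\gamma)}$, where $\rho_\alpha^2=[\tilde u]_s^2+[\tilde v]_s^2$. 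Squeezing $m_{\alpha}\D$ between two quantities of the same order $\alpha^{\,2s/(2s-\gamma)}\to0$ then forces $\rho_\alpha\to0$, that is $[\tilde u]_s^2+[\tilde v]_s^2\to0$. The only delicate point here is matching the exponents in the two bounds, which is exactly guaranteed by $0<\gamma<2s$.
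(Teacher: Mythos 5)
Your overall architecture (fiber map, confinement to a ball where $\inf\J<0$ is attained away from the boundary, Ekeland, then a compactness threshold tied to $\shl$) is the same as the paper's, but the decisive step of the compactness argument is missing. When $(u_n,v_n)\rightharpoonup(\tilde u,\tilde v)$ weakly but not strongly, the Kirchhoff term prevents the clean splitting $c=\J(\tilde u,\tilde v)+(\text{bubble energy})$: the weak limit solves a \emph{modified} system with coefficients $a+bD_i$, where $D_i=\lim_n[u_n]_s^2>[\tilde u]_s^2$, and the energy identity one actually gets is $c\ge E_{\alpha}(\tilde u,\tilde v)+\frac{ab\shl^3}{2}+\frac{b^3\shl^6}{12}+\frac{2}{3}\bigl(\frac{b^2\shl^4}{4}+a\shl\bigr)^{3/2}$ with $E_{\alpha}(u,v)=(\frac{a}{2}+\frac{D_1b}{4})[u]_s^2+(\frac{a}{2}+\frac{D_2b}{4})[v]_s^2-\frac13\cq-\alpha\cpq$. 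Your claim that the positive bubble energy ``cannot be accommodated at the negative level $m_{\alpha}\D$'' tacitly assumes $E_{\alpha}(\tilde u,\tilde v)\ge 0$ (or at least bounded below by minus the threshold), which is not automatic: $E_{\alpha}$ is not $\J$ evaluated at a point of $S\D$, the limit may have smaller mass, and the coupling term can drive $E_{\alpha}$ negative. The paper closes this by bounding $E_{\alpha}(\tilde u,\tilde v)\ge f\bigl(([\tilde u]_s^2+[\tilde v]_s^2)^{1/2}\bigr)$ with $f(t)=\frac{b}{24}t^4-\frac{\alpha}{6}(6-(\dpq))B_1t^{\dpq}$ and checking $\min f>-\bigl(\frac{ab\shl^3}{2}+\cdots\bigr)$, which is exactly where the third smallness condition $\alpha<\alpha_*^*$ enters. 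Your proposal never uses this condition and supplies no substitute, so the exclusion of the dichotomy is unproved. (Also, the threshold constant here comes from Step~3 of Proposition~\ref{a3}, not from Lemma~\ref{lemma_5.6}, which concerns the mountain-pass estimate in the supercritical regime.)

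A second, smaller gap is the sign of the multipliers in part (ii). Testing the equations and combining with the Pohozaev identity yields only $\tilde\lambda_1 d_1^2+\tilde\lambda_2 d_2^2=-\alpha\bigl(p+q-(\dpq)\bigr)\cpq<0$, i.e.\ a sign for a \emph{linear combination}; it does not force each $\tilde\lambda_i<0$ separately. The paper instead argues componentwise by contradiction: if $\tilde\lambda_1\ge 0$, then $(a+bD_1)\tri^s\tilde u\ge 0$ with $\tilde u\ge 0$, and a maximum-principle/Liouville argument for $\tri^s$ forces $\tilde u\equiv 0$, contradicting nontriviality. You need this (or an equivalent) argument for each component.
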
  
\begin{theorem}\label{AA2}
	Let $\alpha>0$, $4+\frac{8s-2\mu}{N}<p+q<2\2=6$. If one of the following conditions holds:
    \begin{itemize}
    \item[(i)]$p>\frac{3}{2},~q>\frac{3}{2}$ and either $4s<N<6s$,   $\alpha<\tilde{\alpha}$ or $\frac{22s-4(p+q)s}{6-(p+q)}< N\le 4s,$
    \item[(ii)] $p>\frac{3}{2},~q<\frac{3}{2}$ and $\frac{16s-4ps}{6-(p+q)}<N<\frac{18s-4ps}{6-(p+q)}$,
    \item[(iii)] $p<\frac{3}{2},~q>\frac{3}{2}$ and $\frac{16s-4qs}{6-(p+q)}<N<\frac{18s-4qs}{6-(p+q)}$,
    \item[(iv)] $p>\frac{3}{2}$, $q=\frac{3}{2}$ and $\frac{8s(4-p)}{9-2p}<N\le4s$,
    \item[(v)]  $p=\frac{3}{2}$, $q>\frac{3}{2}$ and $\frac{8s(4-q)}{9-2q}<N\le4s$. 
    \end{itemize}
    Then, the functional $\mathcal{J}_{\alpha}$ restricted to the set $S(d_1,d_2)$ possesses a critical point of mountain pass type at a level $\sigma_{\alpha}$ satisfying
	$$\sigma_{\alpha}\D\in \left(0,\frac{ab\shl^3}{2}+\frac{b^3\shl^6}{12}+\frac{2}{3}\left(\frac{b^2\shl^4}{4}+a\shl \right)^{\frac{3}{2}} \right).$$ This critical point corresponds to a positive radial solution of the equation \eqref{G}.
\end{theorem}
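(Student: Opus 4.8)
The plan is to find the critical point by a constrained mountain pass argument built on the mass-preserving dilation $t\star(u,v):=\big(e^{Nt/2}u(e^{t}\cdot),e^{Nt/2}v(e^{t}\cdot)\big)$, which fixes both $L^2$-norms, acts on the seminorms by $[t\star u]_s^2=e^{2st}[u]_s^2$, and scales the two Choquard terms by $e^{2s\2 t}$ (critical) and $e^{\delta t}$ (perturbation) with $\delta:=\tfrac{N(p+q)}{2}-2N+\mu$. The assumption $4+\tfrac{8s-2\mu}{N}<p+q<2\2$ is precisely $4s<\delta<2s\2$, so along each fiber the perturbation scales strictly faster than the Kirchhoff term $e^{4st}$ and strictly slower than the critical term. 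I would first record that, for fixed $(u,v)$, the fiber map $t\mapsto\J(t\star(u,v))$ tends to $0$ as $t\to-\infty$, stays positive on an intermediate range when $[u]_s^2+[v]_s^2$ is small, and tends to $-\infty$ as $t\to+\infty$ because the critical exponent $2s\2$ dominates with a negative coefficient. This produces a mountain pass geometry for $\J|_{S\D}$ (restricted, by Palais' symmetric criticality, to the radial subspace of $S\D$), and the associated minimax value $\sigma_{\alpha}\D$ is strictly positive.

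To extract a workable Palais–Smale sequence I would pass to Jeanjean's augmented functional $(t,u,v)\mapsto\J(t\star(u,v))$ on $\R\times S\D$ and run a minimax principle whose deformation is performed on the product space. This yields a sequence $\uvn\subset S\D$ with $\J\uvn\to\sigma_{\alpha}\D$, $\J|_{S\D}'\uvn\to0$, together with the asymptotic Pohozaev relation $\Pa\uvn\to0$, where
\[
\Pa(u,v)=as\big([u]_s^2+[v]_s^2\big)+bs\big([u]_s^4+[v]_s^4\big)-2s\int_{\R^N}(I_{\mu}*|v|^{\2})|u|^{\2}-\delta\alpha\,\cpq
\]
is the derivative of the fiber map at $t=0$. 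Boundedness is then clean: the combination $\tfrac{\delta}{s}\J\uvn-\tfrac1s\Pa\uvn$ annihilates the perturbation term and equals
\[
a\Big(\tfrac{\delta}{2s}-1\Big)\big([u_n]_s^2+[v_n]_s^2\big)+b\Big(\tfrac{\delta}{4s}-1\Big)\big([u_n]_s^4+[v_n]_s^4\big)+\Big(2-\tfrac{\delta}{s\2}\Big)\int_{\R^N}(I_{\mu}*|v_n|^{\2})|u_n|^{\2},
\]
whose three coefficients are all strictly positive exactly because $4s<\delta<2s\2$; since the left-hand side converges, $[u_n]_s^2+[v_n]_s^2$ is bounded, and the mass constraint then bounds $\uvn$ in the energy space.

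The main obstacle is strong convergence, which can fail because of the Hardy–Littlewood–Sobolev-critical term. The decisive point is the strict bound $\sigma_{\alpha}\D<c^*$ stated in the theorem, where $c^*=\tfrac{ab\shl^3}{2}+\tfrac{b^3\shl^6}{12}+\tfrac23\big(\tfrac{b^2\shl^4}{4}+a\shl\big)^{3/2}$ is the critical level associated with the limiting ($\alpha=0$) autonomous Kirchhoff–Choquard problem: maximizing its fiber map reduces, through the quadratic-in-$e^{2st}$ optimization forced by the Kirchhoff term, to a quadratic equation whose positive root carries exactly the radical $\big(\tfrac{b^2\shl^4}{4}+a\shl\big)^{1/2}$. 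To verify $\sigma_{\alpha}\D<c^*$ I would test the minimax along a path built from a rescaled, truncated extremal $U_{\var}$ of $\shl$ (Lemma \ref{lemma_5.6}), inserted with the same profile in both components, and estimate $\max_{t}\J\big(t\star(U_{\var},U_{\var})\big)$ as $\var\to0$: the critical part reproduces $c^*$ up to controlled truncation remainders, while the $\alpha$-term, being of lower scaling order with a favorable sign, contributes a strictly negative leading correction. The conditions (i)–(v) on $p,q,N$ are exactly those under which this perturbative gain dominates the remainders of the fractional bubble, forcing the strict inequality.

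Granting $\sigma_{\alpha}\D<c^*$, I would extract a weak radial limit $(\tilde u,\tilde v)$; the perturbation term converges by the compactness of the radial embeddings in the subcritical range, and a Brezis–Lieb splitting of the critical term together with the energy threshold excludes nonconvergence, forcing $\uvn\to(\tilde u,\tilde v)$ strongly. Hence $(\tilde u,\tilde v)$ is a nontrivial normalized critical point of $\J|_{S\D}$ at level $\sigma_{\alpha}\D$, solving \eqref{G} with Lagrange multipliers $\tilde\lambda_1,\tilde\lambda_2$. Finally, replacing each component by its modulus does not increase the seminorms nor change the constraint or the functional, so the solution may be taken nonnegative, and the strong maximum principle for $\tri^s$ upgrades it to a positive radial solution. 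I expect the fine energy estimate of the previous paragraph — balancing the truncation error of the fractional Choquard bubble against the perturbative gain under (i)–(v) — to be the principal technical difficulty.
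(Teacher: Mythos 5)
Your proposal follows essentially the same route as the paper: the $L^2$-preserving fiber maps, a Jeanjean--Soave minimax on the augmented functional over $\R\times S_r\D$ producing a Palais--Smale sequence with $\Pa\uvn\to 0$, boundedness from a positive-coefficient combination of $\J$ and $\Pa$ (the paper uses $\J-\tfrac14\Pa$, which kills the quartic term instead of the perturbation; either works since $\dpq>4$), a strict energy estimate below the threshold $c^*=\frac{ab\shl^3}{2}+\frac{b^3\shl^6}{12}+\frac{2}{3}\bigl(\frac{b^2\shl^4}{4}+a\shl\bigr)^{3/2}$ via truncated extremals of $\shl$, and a compactness dichotomy. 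Your identification of $c^*$ as the root of the quadratic optimization forced by the Kirchhoff term is exactly what drives the paper's Lemma \ref{lemma_5.6} and Step 3 of Proposition \ref{a3}.

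Two points are genuinely under-developed. First, the entire content of hypotheses (i)--(v) and of the threshold $\tilde\alpha$ lives inside the energy estimate you only sketch: one must compare the decay rate of $\int_{\R^N}(I_\mu*|\we|^p)|\we|^q/\|\we\|_2^{p+q-(\dpq)}$ (which depends on the position of $N$ relative to $4s$ and of $p,q$ relative to $3/2$, producing the case analysis of Lemma \ref{Ak3}(v)) against the $O(\epsilon^{(N-2s)/2})$ remainder of the critical part, and one also needs the strict monotonicity $m_{\alpha}\D<m_{\alpha}(\dg,\df)$ (Lemma \ref{a12}) to reduce to equal masses when testing with a single profile in both components; without carrying this out the theorem's hypotheses are never used. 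Second, your closing compactness step glosses over the Kirchhoff-specific obstruction: $[u_n]_s^4$ does not split additively under Brezis--Lieb, so the weak limit solves a system with coefficients $a+bD_i$, $D_i=\lim_n[u_n]_s^2\ge[u]_s^2$, and satisfies only a perturbed Pohozaev identity. The paper excludes the noncompact alternative not by ``the energy threshold'' alone but by a sign argument: if convergence failed, the limit would satisfy $\J(\tilde u,\tilde v)\le\sigma_\alpha\D-c^*<0$, while the (perturbed) Pohozaev identity together with $\dpq>4$ forces $\J(\tilde u,\tilde v)>0$. That contradiction is the step your outline is missing.
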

\begin{remark}
Using the  techniques and ideas of this article, one can easily prove the Theorems \ref{AAA} and \ref{AA2} for the  following problem 
     \begin{equation*}    
   \begin{cases}
\displaystyle \left(M(\|\tri^{\s}u\|^2_2) \right) (-\Delta)^s u = \lambda_1u + |v|^{r_2}|u|^{r_1-2}u +\alpha p |v|^{q}|u|^{p-2}u \; \text{in} \; \R^N,\\[4pt]
 \left(M(\|\tri^{\s}v\|^2_2) \right) (-\Delta)^s v = \lambda_2v +|u|^{r_1}|v|^{r_2-2}v +\alpha q |u|^{p}|v|^{q-2}v \; \text{in} \; \R^N,\\[4pt]
\displaystyle\int_{\mathbb{R}^N}|u|^2=d_1^2,~~\int_{\R^N}|v|^2=d_2^2,
  \end{cases}
\end{equation*}
where  $N>2s,~s \in (0,1), ~\al>0,~$ $2 < p,~ q <\frac{2N}{N-2s}=:2^*_s,~r_1+r_2= 2^*_s$ and $\lambda_1,~\lambda_2 \in \R$ and $M(t):=a+bt,~a,~b >0$. 
\end{remark}

The following are the notations that are used in the article
 \begin{align*}
  \left(\int_{\R^N}|u|^p\dx\right)^{\frac{1}{p}}:=\|u\|_p,  
 \end{align*}
 \begin{align*}
     \tilde{\alpha}:=\frac{b}{2^{p+q}(\dpq) B_{N,p,q,s,\mu}d_1^{p+q-(\dpq)}\big((b\shl^3)^{\frac{\dpq-4}{2}}+(a\shl^3)^{\frac{\dpq-4}{4}}\big)}
 \end{align*}
 	\begin{align*}
 \alpha_*:=\frac{1}{B_1} \left(\omega_1^{{\frac{4-(\dpq)}{2\2-4}}}- \omega_1^{\frac{2\2-(\dpq)}{2\2-4}}   \right)\left(\frac{a}{2}\left( \frac{b(2\shl)^{\2}}{8\2}  \right)^{\frac{2-(\dpq)}{2\2-4}}+\frac{(\frac{b}{8})^{\frac{2\2-(\dpq)}{2\2-4 }} }{\left(2(2\shl ) ^{\2}\right)^{{\frac{\dpq-4}{ 2\2-4} } }}  \right),
 \end{align*}
 \begin{equation}\label{al_2}
	\alpha^*:=\left(\dfrac{b(4-(\dpq))}{4(2\2 -(\dpq))}(2\shl)^{\2 }  \right)^{\frac{4-(\dpq)}{2(\2-2)}}\left( \dfrac{b}{(\dpq)(2\2 -(\dpq))B_1}\right),
	\end{equation}
    and
    \begin{equation*}
        \alpha_*^*:=\frac{\left(\frac{b}{\dpq}\right)^{\frac{\dpq}{4}}}{B_1\big(6-(\dpq))}\left(\frac{24}{4-(\dpq)}\left(\frac{ab\shl^3}{2}+\frac{b^3\shl^6}{12}+\frac{2}{3}\left(\frac{b^2\shl^4}{4}+a\shl \right)^{\frac{3}{2}}\right)\right)^{1-\frac{\dpq}{4}},
    \end{equation*}
 where, $\omega_1:=\left(\frac{2(4-(\dpq))}{\2(\2-1)(2\2-(\dpq))}\right)$ and $B_1=B_{N,p,q,s,\mu}(d_1^2+d_2^2)^{\frac{p+q-(\dpq)}{2}}$.

\newpage
 
\textbf{Structure of the Paper.} The layout of the paper is organized as follows:
\begin{itemize}
    \item In Section~\ref{S1}, we revisit some classical foundational results, and introduce the variational setting related to the problem \eqref{G}.
    \item In Section~\ref{S3}, we examine the Palais-Smale condition and provide a detailed compactness analysis.
    \item Section~\ref{S4} contains the complete proof of Theorem~\ref{AAA}.
    \item Finally, Section~\ref{S5} is devoted to establishing the proof of Theorem~\ref{AA2}.
\end{itemize}
\maketitle
\section{Preliminaries and Variational Framework}\label{S1}
This section discusses key results and the variational framework that will serve as fundamental tools in the subsequent sections.\\
The space $H^s(\R^N)$ represents the usual fractional Sobolev space, which is a Hilbert space equipped with the standard inner product defined as
$$\langle u,v \rangle_{H^s(\R^N)}=\Aa(u,v)+\int_{\R^N}u v\dx$$
and norm assigned to it \cite{di2012hitchhikers} as follows 
\begin{equation*}
\|u\|_{H^s(\R^N)}=\Big([u]_s^2+\int_{\R^N}|u|^2\dx \Big)^{\frac{1}{2}},
\end{equation*}
where
$$\Aa(u,v):=\int_{\R^N} \int_{\R^N} \frac{(u(x)-u(y))(v(x)-v(y))}{|x-y|^{N+2s}}\dx\dy$$
and
$$[u]_s^2:=\mathcal{A}(u,u)=\int_{\R^N}\int_{\R^N}\frac{|u(x)-u(y)|^2}{|x-y|^{N+2s}}\dx\dy.$$

Consider the  product space $\mathcal{H}:=H^s(\R^N)\times H^s(\R^N)$, which is a normed space  with the norm 
\begin{equation*}
\|(u,v)\|_{\h}=(\|u\|_{H^s(\R^N)}+\|v\|_{H^s(\R^N)})^{\frac{1}{2}}.
\end{equation*}
\begin{proposition}\label{AK2}(Hardy-Littlewood-Sobolev inequality)\cite{lieb2001analysis}
	 Let $p,q>1$ and $0<\mu < N$ with $ \frac{1}{p}+\frac{\mu}{N}+\frac{1}{q}=2,$ $ f \in L^s(\mathbb{R}^N) $ and $ h\in L^r(\mathbb{R}^N) $. Then there exists a sharp constant $C(N,\mu)$, independent of $f$ and $h$, such that 
	\begin{equation*}
	\int_{\mathbb{R}^N} \int_{\mathbb{R}^N} \dfrac{f(x)h(y)}{|x-y|^{\mu}}\dy\dx \leq C(N,\mu)\|f\|_{L^p(\mathbb{R}^N)}\|h\|_{L^q(\mathbb{R}^N)}.
	\end{equation*}
\end{proposition}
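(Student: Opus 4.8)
The plan is to prove the inequality in two stages: first establish it with \emph{some} finite constant, and then argue for the existence of a \emph{best} (sharp) constant. Throughout I read the hypotheses as $f\in L^p(\mathbb{R}^N)$ and $h\in L^q(\mathbb{R}^N)$ (the symbols $L^s,L^r$ in the statement are evidently typographical, since the right-hand side uses $\|f\|_{L^p}$ and $\|h\|_{L^q}$). The starting observation is that the bilinear form
\begin{equation*}
B(f,h):=\int_{\mathbb{R}^N}\int_{\mathbb{R}^N}\frac{f(x)h(y)}{|x-y|^{\mu}}\dy\dx
\end{equation*}
can be written as $B(f,h)=\int_{\mathbb{R}^N} f\,(|\cdot|^{-\mu}*h)\dx$, so that by $L^p$--$L^{p'}$ duality the claimed bound is equivalent to the boundedness of the Riesz-type potential $T_{\mu}h:=|\cdot|^{-\mu}*h$ from $L^q(\mathbb{R}^N)$ into $L^{p'}(\mathbb{R}^N)$, where $\tfrac1p+\tfrac1{p'}=1$. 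The relation $\tfrac1p+\tfrac1q+\tfrac{\mu}{N}=2$ is exactly the one making this mapping dimensionally consistent, and it also makes $B$ invariant under the common dilation $f\mapsto f(\lambda\cdot)$, $h\mapsto h(\lambda\cdot)$ and under common translations.

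Step 1 (inequality with a finite constant). I would prove $T_\mu:L^q\to L^{p'}$ bounded by a layer-cake / maximal-function argument. For fixed $x$, split $T_\mu h(x)=\int_{|z|<R}+\int_{|z|\ge R}$ at a radius $R>0$. The near part is controlled, using $\mu<N$ and a dyadic decomposition, by $c\,R^{N-\mu}\,Mh(x)$, where $Mh$ is the Hardy--Littlewood maximal function; the far part is bounded via H\"older by $c\,R^{N/q'-\mu}\|h\|_q$, the kernel lying in $L^{q'}$ at infinity precisely because the exponent relation forces $\mu q'>N$ (indeed $\tfrac{\mu}{N}-\tfrac1{q'}=\tfrac1{p'}>0$). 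Choosing $R$ to balance the two terms gives the pointwise bound $T_\mu h(x)\lesssim (Mh(x))^{1-\theta}\|h\|_q^{\theta}$ with $(1-\theta)p'=q$, and since $q>1$ the maximal operator is bounded on $L^q$, whence $\|T_\mu h\|_{p'}\lesssim\|h\|_q$. Undoing the duality yields $B(f,h)\le C\|f\|_p\|h\|_q$ with an explicit though non-optimal $C=C(N,\mu,p,q)$.

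Step 2 (existence of a sharp constant). Define $C(N,\mu)$ as the supremum of $B(f,h)$ over $\|f\|_p=\|h\|_q=1$; Step 1 shows it is finite. The key reduction is the Riesz rearrangement inequality: since $|x-y|^{-\mu}$ is a symmetric decreasing function of $x-y$, one has $B(f,h)\le B(f^*,h^*)$ while $\|f^*\|_p=\|f\|_p$ and $\|h^*\|_q=\|h\|_q$, so the supremum is unchanged if we restrict to nonnegative, radially symmetric, nonincreasing $f,h$. This removes the translation degeneracy and compresses the mass, after which a concentration-compactness argument (using the dilation invariance to normalize the scale of a maximizing sequence) produces a maximizing pair and shows the supremum is attained, certifying that $C(N,\mu)$ is genuinely the sharp constant.

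The main obstacle is precisely Step 2: the functional $B$ is invariant under the full noncompact group of translations and dilations, so maximizing sequences may spread out or concentrate, and mere boundedness does not give compactness. Rearrangement defeats translations and partially controls concentration, but in the borderline conformal case $p=q=\tfrac{2N}{2N-\mu}$ the residual (in fact, conformal) invariance still obstructs naive compactness; here one follows Lieb's route --- stereographic projection to the sphere $S^N$ turns $B$ into a conformally invariant form on which a \emph{competing symmetries} iteration between spherical rearrangement and the conformal map forces the maximizing sequence to converge to an explicit optimizer, thereby pinning down both attainment and the explicit value of $C(N,\mu)$. For the present paper only the inequality itself and the existence of a best constant are used, so Step 1 together with the rearrangement reduction already suffices, and the sharp-constant identification may be cited from \cite{lieb2001analysis}.
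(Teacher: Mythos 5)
The paper offers no proof of this proposition at all: it is quoted directly from the cited reference (Lieb--Loss, \emph{Analysis}), so there is no in-paper argument to compare against, and your proposal must be judged on its own merits. Your Step 1 is a correct and complete route to the inequality with a finite constant: the duality reduction to the Riesz potential $T_\mu h=|\cdot|^{-\mu}*h$, the Hedberg splitting with the near part controlled by $R^{N-\mu}Mh(x)$ (the dyadic sum converges since $\mu<N$) and the far part by H\"older (the exponent relation does give $\mu q'>N$, since $\mu/N-1/q'=1/p'>0$), and the balancing $(1-\theta)p'=q$ followed by boundedness of $M$ on $L^q$ (using $q>1$) all check out. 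This is a genuinely different route from the one in the cited text, which proceeds via the layer-cake representation and symmetric-decreasing rearrangement; the maximal-function argument is shorter and more elementary if one grants the Hardy--Littlewood maximal theorem, while the rearrangement route is what sets up the sharp-constant analysis. Two further remarks. First, for the proposition as literally stated, your Step 2 is more than is required: once finiteness is established, the sharp constant exists automatically as the infimum of admissible constants (equivalently the supremum of $B(f,h)$ over $\|f\|_p=\|h\|_q=1$); \emph{attainment} of maximizers --- the hard part, where rearrangement, concentration compactness, and in the conformal case stereographic projection and competing symmetries enter --- is neither claimed in the statement nor used anywhere in the paper, and you correctly defer it to the citation, so its sketchiness is not a gap. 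Second, a point worth flagging explicitly: the sharp constant depends on $p$ and $q$ as well as on $N$ and $\mu$, as your own notation $C(N,\mu,p,q)$ in Step 1 concedes; the paper's notation $C(N,\mu)$ is accurate only in the conformal case $p=q=\tfrac{2N}{2N-\mu}$, which happens to be the case the paper actually exploits later through the constant $S_{HL}$. You were also right to read $L^s(\mathbb{R}^N)$ and $L^r(\mathbb{R}^N)$ in the hypothesis as typographical slips for $L^p(\mathbb{R}^N)$ and $L^q(\mathbb{R}^N)$.
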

\begin{proposition}\cite{mukherjee2016fractional}
	There exists a constant $\shl>0 $ such that
	\begin{equation}\label{Ai}
	\shl=\inf_{u\in H^s(\R^N)\backslash\{0\}}\frac{[u]_s^2}{(\int_{\R^N}(I_{\mu}*|u|^{\2})|u|^{\2})^{\frac{1}{\2}}}.
	\end{equation}
\end{proposition}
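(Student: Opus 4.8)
The plan is to show that the quotient in \eqref{Ai} is bounded below by a strictly positive constant, uniformly over all $u\in H^s(\R^N)\setminus\{0\}$; since the quotient is finite and positive for any fixed test function, this simultaneously gives finiteness and strict positivity of the infimum $\shl$. The two ingredients are the Hardy--Littlewood--Sobolev inequality of Proposition~\ref{AK2} and the fractional Sobolev embedding $H^s(\R^N)\hookrightarrow L^{2_s^*}(\R^N)$ with $2_s^*=\frac{2N}{N-2s}$, for which there is a best constant $S>0$ satisfying $\|u\|_{2_s^*}^2\le S^{-1}[u]_s^2$ (see \cite{di2012hitchhikers}).

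First I would estimate the Choquard energy from above. Writing the convolution as a double integral and applying Proposition~\ref{AK2} with $f=h=|u|^{\2}$ and conjugate exponent $r=\frac{2N}{2N-\mu}$ (which satisfies $\frac{2}{r}+\frac{\mu}{N}=2$) gives
\begin{equation*}
\int_{\R^N}(I_{\mu}*|u|^{\2})|u|^{\2}\le C(N,\mu)\,\big\||u|^{\2}\big\|_{r}^{2}.
\end{equation*}
The crucial arithmetic is that $\2\cdot r=\frac{2N-\mu}{N-2s}\cdot\frac{2N}{2N-\mu}=\frac{2N}{N-2s}=2_s^*$, so that $\big\||u|^{\2}\big\|_r=\|u\|_{2_s^*}^{\2}$ (using $2_s^*/r=\2$). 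Raising to the power $1/\2$ and then invoking the Sobolev inequality yields
\begin{equation*}
\Big(\int_{\R^N}(I_{\mu}*|u|^{\2})|u|^{\2}\Big)^{1/\2}\le C(N,\mu)^{1/\2}\,\|u\|_{2_s^*}^{2}\le C(N,\mu)^{1/\2}\,S^{-1}[u]_s^2.
\end{equation*}
For $u\neq 0$ the denominator in \eqref{Ai} is strictly positive, so dividing produces the $u$-independent lower bound
\begin{equation*}
\frac{[u]_s^2}{\big(\int_{\R^N}(I_{\mu}*|u|^{\2})|u|^{\2}\big)^{1/\2}}\ \ge\ \frac{S}{C(N,\mu)^{1/\2}}>0 ,
\end{equation*}
and taking the infimum gives $\shl\ge S\,C(N,\mu)^{-1/\2}>0$. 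Finiteness is immediate by testing with any fixed $u\in C_c^\infty(\R^N)\setminus\{0\}$.

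The argument is essentially routine, so I do not anticipate a genuine obstacle; the only point requiring care is the exponent bookkeeping, namely verifying that the HLS conjugate exponent $r$ combines with $\2$ to reproduce \emph{exactly} the Sobolev critical exponent $2_s^*$. This matching is precisely what justifies calling $\2$ the upper Hardy--Littlewood--Sobolev critical exponent, and it is what allows the HLS and Sobolev inequalities to chain together. (Attainment of the infimum is a separate, subtler question not asserted by the statement, so I would not address it here.)
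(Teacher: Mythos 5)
Your argument is correct and is exactly the standard proof behind the cited result: chain the Hardy--Littlewood--Sobolev inequality (with the conjugate exponent $r=\tfrac{2N}{2N-\mu}$, so that $\2\cdot r=2_s^*$) with the fractional Sobolev embedding to get a uniform positive lower bound on the quotient, finiteness being clear from any fixed test function. Your exponent bookkeeping checks out, and your lower bound $\shl\ge S\,C(N,\mu)^{-1/\2}$ is in fact sharp, since $\tfrac{N-2s}{2N-\mu}=\tfrac{1}{\2}$ makes it precisely the identity $S=\shl\left(C(N,\mu)\right)^{\frac{N-2s}{2N-\mu}}$ recorded in Lemma~\ref{lulem13}.
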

Using (3.3) of \cite{ghimenti2016nodal} and \eqref{Ai},  we have 
\begin{align}\label{Ai2}
\notag \cq &\le \left(\int_{\R^N}(I_{\mu}*|u|^{\2})|u|^{\2}\right)^{\frac{1}{2}}\left(\int_{\R^N}(I_{\mu}*|v|^{\2})|v|^{\2}\right)^{\frac{1}{2}}\\
\notag
&\le \left( \frac{[u]_s^{\2}[v]_s^{\2}}{\shl^{\2}}\right)\\
&\le \frac{1}{\shl^{\2}}\left( \frac{[u]_s^2+[v]_s^2}{2}\right)^{\2}.
\end{align}
\begin{proposition}
\cite[Lemma 2.4]{feng2018stability} Let $\tw <p<\2$. Then 
\begin{equation}\label{Ai6}
\int_{\R^N}(I_{\mu}*|u|^p)|u|^p\le 
B_{N,p,s,\mu}[u]_s^{2\delta_p}\|u\|_2^{2p-2\delta_p}.
\end{equation}
where $$\delta_p :=\frac{N(p-2)+\mu}{2s} \text{  and  } \delta_q :=\frac{N(q-2)+\mu}{2s},$$    for some $ B_{N,p,s,\mu}>0$.
\end{proposition}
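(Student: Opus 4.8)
The plan is to derive \eqref{Ai6} by chaining the Hardy--Littlewood--Sobolev inequality (Proposition~\ref{AK2}) with the fractional Gagliardo--Nirenberg interpolation inequality, so that the nonlocal double integral is first collapsed to a single Lebesgue norm and then interpolated between the mass $\|u\|_2$ and the seminorm $[u]_s$.

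First I would apply Proposition~\ref{AK2} with $f=h=|u|^p$. By symmetry the two HLS exponents coincide, and the constraint $\frac{2}{t}+\frac{\mu}{N}=2$ forces $t=\frac{2N}{2N-\mu}$, giving
\begin{equation*}
\int_{\R^N}(I_{\mu}*|u|^p)|u|^p \le C(N,\mu)\,\big\||u|^p\big\|_{L^t(\R^N)}^2 = C(N,\mu)\,\|u\|_r^{2p},\qquad r:=pt=\frac{2pN}{2N-\mu}.
\end{equation*}
Next I would verify that $r$ lands in the admissible Sobolev window $[2,2^*_s]$ with $2^*_s:=\frac{2N}{N-2s}$: the map $p\mapsto r$ is increasing and sends the endpoints $p=\tw$ and $p=\2$ to $r=2$ and $r=2^*_s$ respectively, so the hypothesis $\tw<p<\2$ places $r$ strictly inside $(2,2^*_s)$.

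With $2<r<2^*_s$ I would interpolate: writing $\frac{1}{r}=\frac{\lambda}{2}+\frac{1-\lambda}{2^*_s}$ and using H\"older gives $\|u\|_r\le \|u\|_2^{\lambda}\|u\|_{2^*_s}^{1-\lambda}$, after which the fractional Sobolev embedding $H^s(\R^N)\hookrightarrow L^{2^*_s}(\R^N)$ (the diagonal case behind \eqref{Ai}) yields $\|u\|_{2^*_s}\le C[u]_s$ and hence the Gagliardo--Nirenberg bound $\|u\|_r\le C[u]_s^{\theta}\|u\|_2^{1-\theta}$ with $\theta=1-\lambda=\frac{N}{s}\big(\tfrac12-\tfrac1r\big)$. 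Raising this to the power $2p$ and substituting $r=\frac{2pN}{2N-\mu}$, a direct computation gives $p\theta=\frac{N(p-2)+\mu}{2s}=\delta_p$ and $2p(1-\theta)=2p-2\delta_p$, which is exactly the pair of exponents in \eqref{Ai6}; absorbing $C(N,\mu)$ together with the interpolation and embedding constants into a single $B_{N,p,s,\mu}$ completes the argument.

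The only real obstacle is exponent bookkeeping: I must confirm that the HLS-dictated integrability $r=\frac{2pN}{2N-\mu}$ stays below the critical Sobolev exponent and that the interpolation parameter $\theta$ matches $\delta_p/p$ exactly. Both are elementary once the endpoint values of $r$ are computed, and the fractional Gagliardo--Nirenberg inequality itself is standard given the critical embedding encoded in \eqref{Ai}.
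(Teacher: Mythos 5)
Your argument is correct and is precisely the standard derivation behind the cited result (the paper itself only references \cite[Lemma 2.4]{feng2018stability} without proof): HLS with $f=h=|u|^p$ forces $r=\frac{2pN}{2N-\mu}\in(2,2^*_s)$ on the stated range of $p$, and the fractional Gagliardo--Nirenberg interpolation then produces exactly $2p\theta=2\delta_p$ and $2p(1-\theta)=2p-2\delta_p$. The exponent bookkeeping checks out, including the fact that $\theta\in(0,1)$ if and only if $2_{\mu,*}<p<\2$, so no further justification is needed.
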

Then, again  using \cite{ghimenti2016nodal} and \eqref{Ai6}, we obtain	
	\begin{equation}\label{Ai4}
	\cpq \le B_{N,p,q,s,\mu}\left( [u]_s^2+[v]_s^2\right)^{\frac{\dpq}{2}}\left(\|u\|_2^2+\|v\|_2^2\right)^{\frac{p+q-(\dpq)}{2}}.
	\end{equation}
We say that $(u,v) \in \h$ is a weak solution of \eqref{G} if 
\begin{align*}
\left(a+b[u]_s^2\right)\Aa(u,\phi)+\left(a+b[v]_s^2\right)\Aa(v,\xi)-\lambda_1\int_{\R^N}u\phi \dx-\lambda_2 \int_{\R^N}v\xi \dx
-\int_{\R^N}(I_{\mu}*|v|^{\2})|u|^{\2-2}u\phi\\ -\int_{\R^N}(I_{\mu}*|u|^{\2})|v|^{\2-2}v\xi
-\alpha p\int_{\R^N}(I_{\mu}*|v|^{q})|u|^{p-2}u\phi-\alpha q \int_{\R^N}(I_{\mu}*|u|^{p})|v|^{q-2}v\xi=0,
\end{align*} 
for all $(\phi,\xi) \in \h$.\par 
The energy functional $\J :\h \to \R$ associated with the problem \eqref{G} as
\begin{align*}
\J(u,v)=\frac{a}{2}\left([u]_s^2+[v]_s^2\right)+\frac{b}{4}\left([u]_s^4+[v]_s^4\right)
-\frac{1}{\2}\cq-\alpha\cpq,
\end{align*}
on the constraint set
$$S(d_1,d_2):=\{(u,v)\in \h:\|u\|_2^2=d_1^2,\|v\|_2^2=d_2^2 \}.$$
Define $S_r(d_1,d_2):=S\D\cap\h_r$, where 
\begin{equation*}
    \h_r:=\{(u(x),v(x))\in\h:~u(x)=u(|x|), \ v(x)=v(|x|)\}.
\end{equation*}
Furthermore, solutions to \eqref{G} satisfy the Pohozaev identity
\begin{align}\label{A1}
\Pa(u,v):=a\left([u]_s^2 + [v]_s^2\right) +b\left([u]_s^4+[v]_s^4\right) -2\cq
-\alpha(\delta_p +\delta_q) \cpq=0.
\end{align}
Consider the Pohozaev manifold: 
$$\mathcal{P} (d_1,d_2):=\{(u,v)\in S(d_1,d_2):\Pa(u,v)=0\},$$
and
$$\M :=\inf_{\mathcal{P}(d_1,d_2)}\J(u,v), \ m_{r,\alpha}(d_1,d_2):=\inf_{\mathcal{P}(d_1,d_2) \cap\h_r}\J(u,v).$$
A normalized ground state of \eqref{G} is a solution $(u,v)\in \mathcal{P}(d_1,d_2)$ of \eqref{G} that achieves $\M $.

The  $L^2$-invariant scaling is defined as $k\star u(x):=e^{\frac{N}{2}k}u(e^kx)$ and $k\star(u,v):=(k\star u,k\star v)$. For $\alpha \in \R$ and $(u,v)\in S(d_1,d_2)$, we consider the map $\f:\R \to \R$ given by
\begin{equation*}
\begin{aligned}
\notag\f(k):=\J(k \star (u,v))=&\frac{a}{2}e^{2sk}([u]_s^2 +[v]_s^2) +\frac{b}{4}e^{4sk}([u]_s^4 + [v]_s^4)
-\frac{1}{\2}e^{2s\2 k}\cq \\&-\alpha e^{s(\delta_p + \delta_q)k}\cpq.
\end{aligned}
\end{equation*}
\begin{corollary}\label{a0}
	Let $\alpha \in \R^+$ and $(u,v)\in S(d_1,d_2)$, then $k\star(u,v)\in \mathcal{P}(d_1,d_2)$ if and only if $k\in \R$ is a critical point for $\f$ and the map $(u,v)\mapsto k\star(u,v)$ is of class $\mathcal{C}^1$.
\end{corollary}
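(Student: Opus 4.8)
The plan is to reduce the entire statement to the single identity $\f'(k)=s\,\Pa(k\star(u,v))$, after which the claimed equivalence is immediate. First I would record how the homogeneous ingredients of $\J$ transform under the $L^2$-invariant scaling $k\star u(x)=e^{\frac{N}{2}k}u(e^kx)$. Performing the change of variables $\xi=e^kx,\ \eta=e^ky$ in each integral yields
\begin{align*}
[k\star u]_s^2 &= e^{2sk}[u]_s^2, \qquad \|k\star u\|_2=\|u\|_2,\\
\int_{\R^N}(I_\mu*|k\star u|^{\2})|k\star v|^{\2} &= e^{2s\2 k}\cq,\\
\int_{\R^N}(I_\mu*|k\star u|^{p})|k\star v|^{q} &= e^{s(\delta_p+\delta_q)k}\cpq .
\end{align*}
The exponent bookkeeping uses only $\2=\frac{2N-\mu}{N-2s}$ and $\delta_p+\delta_q=\frac{N(p+q-4)+2\mu}{2s}$; one checks $N\2+\mu-2N=2s\2$ and $\frac{N(p+q)}{2}+\mu-2N=s(\delta_p+\delta_q)$. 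These relations are exactly what produce the explicit form of $\f$ already displayed, and the $L^2$-invariance $\|k\star u\|_2=\|u\|_2$ shows that $k\star(u,v)\in S(d_1,d_2)$ for every $k\in\R$, so membership in $\mathcal{P}(d_1,d_2)$ along the orbit is governed solely by the condition $\Pa=0$.

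Next I would differentiate $\f$ directly. Since every summand of $\f(k)$ is a constant times a single exponential $e^{ck}$, differentiation pulls down the factor $c$, giving
\[
\f'(k)=s\Big[a\,e^{2sk}([u]_s^2+[v]_s^2)+b\,e^{4sk}([u]_s^4+[v]_s^4)-2\,e^{2s\2 k}\cq-\alpha(\delta_p+\delta_q)\,e^{s(\delta_p+\delta_q)k}\cpq\Big].
\]
Re-inserting the scaling laws above, the bracket is precisely $\Pa(k\star(u,v))$, whence $\f'(k)=s\,\Pa(k\star(u,v))$. Because $s>0$ and $k\star(u,v)$ automatically lies on $S(d_1,d_2)$, we obtain the chain $k\star(u,v)\in\mathcal{P}(d_1,d_2)\iff \Pa(k\star(u,v))=0\iff \f'(k)=0$, which is the asserted equivalence between Pohozaev points along the scaling orbit and critical points of $\f$.

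For the regularity claim I would note that, for each fixed $k$, the map $(u,v)\mapsto k\star(u,v)$ is \emph{linear}, and the first scaling law bounds its operator norm on $\h$ by a constant depending only on $k$; a bounded linear map is of class $\mathcal{C}^\infty$, in particular $\mathcal{C}^1$, while $\f$ itself is a finite linear combination of exponentials in $k$ whose coefficients are continuous functionals of $(u,v)$, hence smooth in $k$ and continuously differentiable in $(u,v)$. I do not anticipate a genuine obstacle here: the whole content is the scaling computation. The only point demanding care is the exponent arithmetic — verifying that the critical Choquard term and the coupled term scale with exactly the powers $2s\2$ and $s(\delta_p+\delta_q)$, so that the four derivative terms collapse to the common prefactor $s$ times $\Pa$ evaluated along the orbit; a single sign or exponent slip there would destroy the clean identity $\f'(k)=s\,\Pa(k\star(u,v))$ on which the corollary rests.
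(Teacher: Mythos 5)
Your proposal is correct and coincides with the argument the paper leaves implicit: the corollary is exactly the identity $\f'(k)=s\,\Pa(k\star(u,v))$ obtained from the scaling laws $[k\star u]_s^2=e^{2sk}[u]_s^2$, $\|k\star u\|_2=\|u\|_2$, and the exponents $2s\2$ and $s(\delta_p+\delta_q)$ for the two Choquard terms, which the paper records only through the displayed formula for $\f$ and the remark that $\mathcal{P}(d_1,d_2)=\{(u,v)\in S(d_1,d_2):(\f)'(0)=0\}$. Your exponent bookkeeping checks out, so nothing further is needed.
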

By a straightforward computation, we obtain
$$\mathcal{P} \D =\{(u,v) \in S\D :(\f)'(0)=0 \}.$$
We split $\mathcal{P}\D$ into three disjoint sets as follows:
$$\mathcal{P}^{+}\D :=\{(u,v) \in \mathcal{P} \D : (\f)''(0)>0\},$$
$$\mathcal{P}^{0}\D :=\{(u,v) \in \mathcal{P} \D : (\f)''(0)=0\},$$
$$\mathcal{P}^{-}\D :=\{(u,v) \in \mathcal{P} \D : (\f)''(0)<0\}.$$
\begin{lemma}\label{lulem13}
	\cite{mukherjee2016fractional}
	The constant $S_{HL}$ is achieved if  and only if 
	\begin{align*}
	u=C\left(\frac{b}{b^2+|x-a|^2}\right)^{\frac{N-2s}{2}}
	\end{align*} 
	where $C>0$ is a fixed constant, $a\in \mathbb{R}^N$ and $b\in (0,\infty)$ are parameters. Moreover,
	\begin{align*}
	S=	S_{HL} \left(C(N,\mu)\right)^{\frac{N-2s}{2N -\mu}}.
	\end{align*}
\end{lemma}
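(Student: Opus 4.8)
The strategy is to sandwich the Choquard-type quotient defining $\shl$ between the sharp Hardy–Littlewood–Sobolev (HLS) inequality and the sharp fractional Sobolev inequality, and then to observe that both are saturated by one and the same family of functions. First I would apply Proposition~\ref{AK2} in the conformal diagonal case, taking $f=h=|u|^{\2}$ and the exponent $t=\frac{2N}{2N-\mu}$ (which satisfies $\frac{2}{t}+\frac{\mu}{N}=2$). Since $t\cdot\2=2^*_s:=\frac{2N}{N-2s}$, this gives
\[
\int_{\R^N}(I_{\mu}*|u|^{\2})|u|^{\2}\dx\le C(N,\mu)\,\big\||u|^{\2}\big\|_{t}^{2}=C(N,\mu)\,\|u\|_{2^*_s}^{2\2}.
\]
Raising to the power $1/\2=\frac{N-2s}{2N-\mu}$ and inserting the sharp fractional Sobolev inequality $S\|u\|_{2^*_s}^{2}\le[u]_s^{2}$, I would obtain
\[
\Big(\int_{\R^N}(I_{\mu}*|u|^{\2})|u|^{\2}\dx\Big)^{1/\2}\le C(N,\mu)^{\frac{N-2s}{2N-\mu}}\|u\|_{2^*_s}^{2}\le \frac{C(N,\mu)^{\frac{N-2s}{2N-\mu}}}{S}\,[u]_s^{2},
\]
so that $\shl\ge S\,C(N,\mu)^{-\frac{N-2s}{2N-\mu}}$; this already yields the asserted identity $S=\shl\,C(N,\mu)^{\frac{N-2s}{2N-\mu}}$ once equality is shown to be attainable.

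Next I would analyse the equality cases of the two steps separately. Equality in the conformal HLS inequality forces, by Lieb's classification of HLS extremals, $|u|^{\2}=c\,(\gamma^2+|x-a|^2)^{-\frac{2N-\mu}{2}}$ up to a positive constant, a translation and a dilation; since $\frac{2N-\mu}{2\2}=\frac{N-2s}{2}$, this is equivalent to $|u|=c'\,(\gamma^2+|x-a|^2)^{-\frac{N-2s}{2}}$. Equality in the Sobolev step forces $u$ to be an extremal of the fractional Sobolev inequality, and by the classification of such extremals (Cotsiolis–Tavoularis, Frank–Lieb) these are exactly the bubbles $C\big(\frac{b}{b^2+|x-a|^2}\big)^{\frac{N-2s}{2}}$. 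The decisive point is that these two extremal families coincide; hence a single bubble saturates \emph{both} inequalities simultaneously, the infimum in \eqref{Ai} is attained, and substituting the bubble back confirms the constant relation.

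For the converse (``only if''), if $u$ attains $\shl$ then the full chain above holds with equality, and since $A\le B\le C$ with $A=C$ forces $A=B=C$, equality must hold individually in the Sobolev step; thus $u$ is a Sobolev extremal and therefore has the stated form. The main obstacle is therefore not the inequality bookkeeping but the two deep classification results it rests on—chiefly the uniqueness (up to the conformal symmetries) of extremals for the \emph{fractional} Sobolev inequality, which for $s\in(0,1)$ is genuinely nontrivial. A secondary technical point to watch is the function space: the bubbles lie in the homogeneous space $\dot H^s(\R^N)$ and need not belong to $L^2(\R^N)$ when $N\le 4s$, so ``achieved'' is understood in $\dot H^s$, the value of the scale-invariant quotient being unchanged when passing between $H^s$ and $\dot H^s$. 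Once the two extremal sets are matched through the exponent identity $\frac{2N-\mu}{2\2}=\frac{N-2s}{2}$, both the characterization and the constant identity follow.
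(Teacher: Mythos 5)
Your argument is correct and is essentially the standard proof of this result, which the paper itself does not reprove but simply imports from \cite{mukherjee2016fractional}; the cited reference establishes the lemma by exactly this sandwich of the sharp Hardy--Littlewood--Sobolev inequality (in the diagonal conformal case $t=\tfrac{2N}{2N-\mu}$) with the sharp fractional Sobolev inequality, together with the observation that Lieb's HLS extremals and the fractional Sobolev bubbles coincide via the exponent identity $\tfrac{2N-\mu}{2\cdot\2}=\tfrac{N-2s}{2}$. Your remark about working in the homogeneous space $\dot H^s(\R^N)$ when $N\le 4s$ is a genuine and correctly handled subtlety rather than a gap.
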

Consider the family of minimizers $\{U_\e\}_{\e>0}$ of $S$ defined as 
\begin{align*}
U_\e = \e^{-\frac{(N-2s)}{2}} S^{\frac{(N-\mu)(2s-N)}{4s(N-\mu+2s)}}(C(N,\mu))^{\frac{2s-N}{2(N-\mu+2s)}} u^*(x/\e)
\end{align*}
where $u^*(x)= \overline{u}(x/S^{1/2s}),\; \overline{u} (x)= \frac{\tilde{u}(x)}{|\tilde{u}|_{2^*_s}}$ and $\tilde{u}(x)= \alpha(\beta^2+|x|^2)^{\frac{-(N-2s)}{2}}$ with $ \al \in \mathbb{R}\setminus\{0\}$ and  $\ba >0$ are fixed constants. Then  from Lemma \ref{lulem13}, for $\e>0, ~U_\e$ satisfies 
\begin{align*}
(-\De)^s u = (I_{\mu}* |u|^{\2})|u|^{\2-2}u \text{  in } \R^N. 
\end{align*}
Consider a radially decreasing cut-off function $\eta\in C_0^{\infty}(\R^N) $ such that $\eta\equiv1$ in $B_1(0)$ and $\eta\equiv0$ in $\R^N\backslash B_2(0)$. Define \begin{equation}\label{a102}w_{\epsilon}:=\eta(x)U_{\epsilon}(x),
\end{equation}
we have $w_{\epsilon}\in H^s(\R^N)$.
 \begin{lemma}\label{Ak3}
        Let $s\in(0,1), N>2s, \tw<p,q<\2=3$ and $\mu\in (0,N)$, then the following holds
        \begin{itemize}
            \item[(i)] \begin{equation*}
[\we]_s^2\le \shl^{\frac{3}{2}}+O(\epsilon^{N-2s}),
\end{equation*}
            \item[(ii)]\begin{align*}
\int_{\R^N}(I_{\mu}*|\we|^{\2})|\we|^{\2}\le
\shl^{\frac{3}{2}}+O(\epsilon^{N}),\\
\int_{\R^N}(I_{\mu}*|\we|^{\2})|\we|^{\2}\ge \shl^{\frac{3}{2}}-O(\epsilon^N),
\end{align*}
            \item[(iii)]\begin{equation*}
\|\we\|_2^2=\begin{cases}
\epsilon^{2s}+O(\epsilon^{N-2s})&N>4s,\\
\epsilon^{2s}|\log\epsilon|+O(\epsilon^{2s})&N=4s,\\
\epsilon^{N-2s}+O(\epsilon^{2s})&N<4s.\\
\end{cases}
\end{equation*}
\item[(iv)]
\begin{equation*}
    \int_{\R^N}(I_{\mu}*|w_{\epsilon}|^p)|w_{\epsilon}|^q\le
    \begin{cases}
        \epsilon^{\frac{(N-2s)(6-(p+q))}{2}}&p>\frac{3}{2},~q>\frac{3}{2},\\
        \epsilon^{\frac{(N-2s)(9-2p)}{4}}|\log\epsilon|^{\frac{3(N-2s)}{2N}}&p>\frac{3}{2},~q=\frac{3}{2},\\
        \epsilon^{\frac{(N-2s)(9-2q)}{4}}|\log\epsilon|^{\frac{3(N-2s)}{2N}}&p=\frac{3}{2},~q>\frac{3}{2},\\
        \epsilon^{\frac{(N-2s)(3-p+q)}{2}}&p>\frac{3}{2},~q<\frac{3}{2},\\
        \epsilon^{\frac{(N-2s)(3+p-q)}{2}}& p<\frac{3}{2},~q>\frac{3}{2}.
    \end{cases}
\end{equation*}
\item[(v)]  
\begin{equation*}
\dfrac{\displaystyle\int_{\R^N}(I_{\mu}*|w_{\epsilon}|^p)\,|w_{\epsilon}|^q}
      {\|w_{\epsilon}\|_2^{\,p+q-(\dpq)}}
\;\le\;
\begin{cases}
\textbf{Case 1: } 4s<N<6s, \\[4pt]
\quad
\begin{cases}
 \text{Constant}~~, & p>\tfrac{3}{2},\; q>\tfrac{3}{2}, \\[4pt]
\epsilon^{\frac{(N-2s)(2\min\{p,q\}-3)}{4}}, & \big(p>\tfrac{3}{2},\,q<\tfrac{3}{2}\big)\;\text{or}\;\big(p<\tfrac{3}{2},\,q>\tfrac{3}{2}\big), \\[4pt]
|\log \epsilon|^{\frac{3(N-2s)}{2N}}, & \big(p>\tfrac{3}{2},\,q=\tfrac{3}{2}\big)\;\text{or}\;\big(p=\tfrac{3}{2},\,q>\tfrac{3}{2}\big),
\end{cases} \\[12pt]

\textbf{Case 2: } N=4s, \\[4pt]
\quad
\begin{cases}
|\log \epsilon|^{\tfrac{p+q-6}{2}}, & p>\tfrac{3}{2},\; q>\tfrac{3}{2}, \\[4pt]
\epsilon^{s(2\min\{p,q\}-3)}\,|\log \epsilon|^{\tfrac{6-(p+q)}{2}}, & \big(p>\tfrac{3}{2},\,q<\tfrac{3}{2}\big)\;\text{or}\;\big(p<\tfrac{3}{2},\,q>\tfrac{3}{2}\big), \\[4pt]
|\log \epsilon|^{\tfrac{-3+\max\{p,q\}}{2}}, & \big(p>\tfrac{3}{2},\,q=\tfrac{3}{2}\big)\;\text{or}\;\big(p=\tfrac{3}{2},\,q>\tfrac{3}{2}\big),
\end{cases}\\[12pt]
\textbf{Case 3: } 2s<N<4s, \\[4pt]
\quad
\begin{cases}
\epsilon^{\tfrac{(N-2s)(4s-N)(6-(p+q))}{4s}}, & p>\tfrac{3}{2},\; q>\tfrac{3}{2}, \\[4pt]
\epsilon^{\tfrac{(N-2s)}{4s}\big(18s+N(p+q)-4s\max\{p,q\}-6N\big)}, & \big(p>\tfrac{3}{2},\,q<\tfrac{3}{2}\big)\;\text{or}\;\big(p<\tfrac{3}{2},\,q>\tfrac{3}{2}\big), \\[4pt]
\epsilon^{\tfrac{(N-2s)(9-2\max\{p,q\})(4s-N)}{8s}}\,|\log \epsilon|^{\frac{3(N-2s)}{2N}}, & \big(p>\tfrac{3}{2},\,q=\tfrac{3}{2}\big)\;\text{or}\;\big(p=\tfrac{3}{2},\,q>\tfrac{3}{2}\big).
\end{cases}
\end{cases}
\end{equation*}
 \end{itemize}
\end{lemma}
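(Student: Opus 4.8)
The plan is to reduce every item to an explicit integral of the bubble $U_\epsilon$ and to read off the asymptotics from its scaling and polynomial decay. Recall from the definition of $U_\epsilon$ that $U_\epsilon(x)=\epsilon^{-(N-2s)/2}U(x/\epsilon)$ up to the fixed normalizing constants, where $U:=U_1$ is bounded near the origin and satisfies $U(x)\sim|x|^{-(N-2s)}$ as $|x|\to\infty$, and that $U_\epsilon$ solves $\tri^sU_\epsilon=(I_\mu*|U_\epsilon|^{\2})|U_\epsilon|^{\2-2}U_\epsilon$. Since $\2=3$ here, the identity $\frac{2N-\mu}{N-2s}=3$ forces $\mu=6s-N$, which I will use throughout to simplify exponents. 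The cutoff $\eta$ confines $\we$ to $B_2(0)$, so after the change of variables $x=\epsilon y$ each integral becomes one over the dilated ball $B_{2/\epsilon}(0)$ with integrand governed by $U$, and the asymptotics are dictated by whether the resulting radial integrals converge, diverge logarithmically, or diverge polynomially as $\epsilon\to0$.

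For (i) and (ii) I would use the standard truncation estimates for the fractional bubble: scale invariance gives $[U_\epsilon]_s^2=\int_{\R^N}(I_\mu*|U_\epsilon|^{\2})|U_\epsilon|^{\2}=\shl^{3/2}$ (as $U_\epsilon$ minimizes $\shl$ and solves the critical Choquard equation), and it remains to bound the error from replacing $U_\epsilon$ by $\we=\eta U_\epsilon$; controlling the Gagliardo seminorm of $(1-\eta)U_\epsilon$ and the associated cross and convolution terms through the tail of $U_\epsilon$ produces the $O(\epsilon^{N-2s})$ error in (i) and the two-sided $O(\epsilon^N)$ bounds in (ii), exactly as in the fractional Choquard analogue of the Brezis--Nirenberg computation. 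For (iii), changing variables gives $\|\we\|_2^2=\epsilon^{2s}\int_{\R^N}\eta(\epsilon y)^2U(y)^2\dy$, and since $U(y)^2\sim|y|^{-2(N-2s)}$ the radial integral $\int_1^{2/\epsilon}r^{N-1-2(N-2s)}\,dr$ converges, diverges like $|\log\epsilon|$, or diverges like $\epsilon^{-(4s-N)}$ precisely according to whether $N>4s$, $N=4s$, or $N<4s$, giving the three cases.

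Item (iv) is the technical heart. Writing the cross term as a double integral and substituting $x=\epsilon\xi$, $y=\epsilon\zeta$ collects the prefactor $\epsilon^{2N-\mu-(p+q)(N-2s)/2}=\epsilon^{(N-2s)(6-(p+q))/2}$ (using $\mu=6s-N$) and leaves the truncated integral
\[
I_\epsilon:=\int_{B_{2/\epsilon}(0)}\int_{B_{2/\epsilon}(0)}\frac{U(\xi)^p\,U(\zeta)^q}{|\xi-\zeta|^{\mu}}\,d\xi\,d\zeta.
\]
The decay $U^p\sim|\xi|^{-p(N-2s)}$ places the Hardy--Littlewood--Sobolev admissibility boundary for the pair $(U^p,U^q)$ exactly at $p=\tfrac32$ (resp.\ $q=\tfrac32$): when $p,q>\tfrac32$ both tails are integrable and $I_\epsilon$ stays bounded, giving the clean power; when one exponent drops below $\tfrac32$ the corresponding tail diverges at a polynomial rate $\epsilon^{-(N-2s)(3/2-q)}$ (resp.\ in $p$), producing the extra power of $\epsilon$; and exactly at $p=\tfrac32$ or $q=\tfrac32$ the borderline divergence produces the logarithmic factor. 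Matching these rates against the prefactor in each regime reproduces the five cases.

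Finally (v) follows by dividing the bound from (iv) by $\|\we\|_2^{\,p+q-(\dpq)}$ and inserting the three regimes of (iii). A short computation with $\mu=6s-N$ shows $p+q-(\dpq)=\frac{(N-2s)(6-(p+q))}{2s}$, so when $N>4s$ and $p,q>\tfrac32$ the $L^2$-power is exactly $\epsilon^{(N-2s)(6-(p+q))/2}$, which cancels the numerator and leaves a constant; all remaining entries of the table arise from this same cancellation now carrying the logarithmic and polynomial corrections of (iii) and (iv). The main obstacle throughout is the precise determination of the divergence rate of $I_\epsilon$, together with its logarithmic corrections at $p=\tfrac32$ and $q=\tfrac32$, in (iv); once these are pinned down, (v) is an error-prone but routine combination of the five numerator cases with the three denominator regimes of (iii), and the rest is standard scaling and tail estimation.
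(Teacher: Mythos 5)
Your reductions for (i)--(iii), and the bookkeeping in (v) (including the identity $p+q-(\dpq)=\tfrac{(N-2s)(6-(p+q))}{2s}$ and the scaling prefactor $\e^{(N-2s)(6-(p+q))/2}$ in (iv)), are correct; the paper itself disposes of (i)--(iii) by citing Propositions 2.7--2.8 of \cite{giacomoni2018doubly}. The genuine problem is your mechanism for (iv). The stated table is \emph{not} obtained by a direct tail analysis of the truncated double integral $I_\e$: the paper first applies the symmetrization inequality $\int(I_\mu*|u|^p)|v|^q\le\big(\int(I_\mu*|u|^p)|u|^p\big)^{1/2}\big(\int(I_\mu*|v|^q)|v|^q\big)^{1/2}$ (the same (3.3) of \cite{ghimenti2016nodal} already used for \eqref{Ai2} and \eqref{Ai4}) and then cites the single-function asymptotics of $\int(I_\mu*|\we|^p)|\we|^p$ from \cite[eq.\ (4.9)]{he2022normalized}. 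It is for these \emph{diagonal} quantities that the threshold sits exactly at $p=\2/2=\tfrac32$; every entry of (iv) is literally the geometric mean of two diagonal estimates, and the logarithm power $|\log\e|^{3(N-2s)/(2N)}$ is the square root of $|\log\e|^{3(N-2s)/N}$, the fingerprint of $\|\we\|_{N/(N-2s)}^{3}$ arising when HLS is applied to the diagonal term at exponent $\tfrac32$. A direct radial tail analysis can only produce powers of $|\log\e|$ equal to $1$ or $\tfrac12$, never $\tfrac{3(N-2s)}{2N}$, so your route cannot ``reproduce the five cases'' as claimed.

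Concretely, your asserted facts about $I_\e$ are false. Splitting $\{|\xi|,|\zeta|\le 2/\e\}$ into the region $|\xi-\zeta|\lesssim|\zeta|$ and its complement shows that, for $p>\tfrac32$, the cross integral diverges only when $q(N-2s)+\mu<N$, i.e.\ $q<\tfrac{2(N-3s)}{N-2s}$, which is strictly below $\tfrac32$ whenever $\mu=6s-N>0$; at $q=\tfrac32$ the integral converges and no logarithm appears, and when it does diverge the rate is $R^{\,2(N-3s)-q(N-2s)}$ with $R=2/\e$, whose exponent is smaller than your claimed $(N-2s)(3-2q)/2$ by exactly $\mu/2$. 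Because the true divergence is slower, a correctly executed direct computation would still yield (sharper) upper bounds that imply the lemma; but as written your argument asserts rates and borderline logarithms that your own method does not produce, and the log cases of the table would be left unproved. To repair it, either insert the Cauchy--Schwarz step for the Riesz potential and reduce to the diagonal estimates (the paper's route), or carry out the direct tail analysis honestly and then check case by case that the resulting bounds dominate the stated ones.
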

\begin{proof}
For parts \textit{(i)–(iii)}, the conclusions follow directly from Propositions 2.7 and 2.8 of  \cite{giacomoni2018doubly}. \textit{(iv)} is obtained by combining identity \eqref{AK2} with \cite[equation (4.9)]{he2022normalized}. Finally, the proof of $(v)$ is achieved by applying the results from \textit{(iii)} and \textit{(iv)}.
            
\end{proof} 
\maketitle
\section{Technical Results}\label{S3}
In this section, we discuss the Palais–Smale sequences and conduct a compactness analysis. To address the challenges posed by the nonlocal terms arising from the Kirchhoff operator, we construct a suitably perturbed Pohozaev manifold.
\qquad
\begin{lemma}\label{a1}\cite{moroz2013groundstates} Let $(u_n,v_n) \rightharpoonup (u,v)$ in $\h$, for $2_{\mu,*}<p,q<\2$. Then 
\begin{equation*}
\cnq=\cpq +o_n(1).
\end{equation*}
\end{lemma}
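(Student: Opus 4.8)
The statement is the sequential continuity of the Hardy--Littlewood--Sobolev bilinear energy $(f,g)\mapsto\int_{\R^N}(I_\mu*f)g$ evaluated at the pair $\big(|u_n|^p,|v_n|^q\big)$, so the plan is to reduce it to \emph{strong} Lebesgue convergence of the two factors and then invoke the boundedness of the HLS form, rather than to split off a Brezis--Lieb remainder. Fix the exponents $r=t=\frac{2N}{2N-\mu}$, which satisfy $\frac1r+\frac\mu N+\frac1t=2$, so that Proposition \ref{AK2} applies to $\cnq$ with $f=|u_n|^p\in L^r(\R^N)$ and $g=|v_n|^q\in L^t(\R^N)$. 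The structural point that drives everything is that with this choice the underlying Lebesgue exponents are $pr=\frac{2Np}{2N-\mu}$ and $qt=\frac{2Nq}{2N-\mu}$, and the strict bounds $\tw<p,q<\2$ translate \emph{exactly} into $pr,\,qt\in(2,2^*_s)$; both factors therefore sit strictly inside the Sobolev range, which is precisely what allows the weak limit to be recovered.

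The first step is to upgrade $(u_n,v_n)\rightharpoonup(u,v)$ to $u_n\to u$ in $L^{pr}(\R^N)$ and $v_n\to v$ in $L^{qt}(\R^N)$. In the class in which this lemma is applied --- the radial space $\h_r$, where the Palais--Smale sequences of the following sections are taken --- this is immediate from the compactness of the embedding $H^s_{\mathrm{rad}}(\R^N)\hookrightarrow L^{\theta}(\R^N)$ for every $\theta\in(2,2^*_s)$, applicable here precisely because $pr,qt$ are strictly subcritical. Once $u_n\to u$ and $v_n\to v$ strongly in these spaces, the continuity of the Nemytskii maps $w\mapsto|w|^p\colon L^{pr}\to L^r$ and $w\mapsto|w|^q\colon L^{qt}\to L^t$ gives $|u_n|^p\to|u|^p$ in $L^r(\R^N)$ and $|v_n|^q\to|v|^q$ in $L^t(\R^N)$.

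It then remains to pass to the limit in the bilinear form, which I would do by the telescoping decomposition
\begin{align*}
\cnq-\cpq
&=\int_{\R^N}\big(I_\mu*(|u_n|^p-|u|^p)\big)|v_n|^q+\int_{\R^N}(I_\mu*|u|^p)\big(|v_n|^q-|v|^q\big).
\end{align*}
By Proposition \ref{AK2}, the first integral is bounded by $C(N,\mu)\,\big\||u_n|^p-|u|^p\big\|_{L^r}\,\big\||v_n|^q\big\|_{L^t}$ and the second by $C(N,\mu)\,\big\||u|^p\big\|_{L^r}\,\big\||v_n|^q-|v|^q\big\|_{L^t}$. Since $\big\||v_n|^q\big\|_{L^t}$ stays bounded (the sequence is bounded in $\h$) and both Nemytskii differences tend to $0$ by the previous step, the right-hand side is $o_n(1)$, giving $\cnq=\cpq+o_n(1)$.

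The one genuinely substantive step is the first: converting weak $H^s$-convergence into strong $L^{pr}$- and $L^{qt}$-convergence, i.e. ruling out loss of mass at infinity. This is exactly where the strict subcriticality $\tw<p,q<\2$ (equivalently $pr,qt\in(2,2^*_s)$) is essential and why the compactness of the radial embedding is invoked: at the endpoints $\tw$ and $\2$ the exponents $pr,qt$ would degenerate to $2$ and $2^*_s$, where the embedding is no longer compact. Accordingly I would present the reduction to strong $L^{pr},L^{qt}$ convergence as the heart of the argument, with the HLS continuity in the last paragraph being purely mechanical.
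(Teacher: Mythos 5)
Your argument is correct where it applies, but it is a genuinely different route from the paper's: the paper offers no proof at all for this lemma, merely a citation to Moroz--Van Schaftingen, and what that source provides is of a different shape --- a nonlocal Brezis--Lieb splitting for bounded, a.e.\ convergent sequences, valid without any symmetry, which (writing $\tilde u_n = u_n - u$, $\tilde v_n = v_n - v$) yields $\cnq = \tcnq + \cpq + o_n(1)$ rather than the full convergence $\cnq \to \cpq$; to conclude from it one must still kill the remainder $\tcnq$, which again needs compactness. Your proof gets the full convergence directly and self-containedly: the exponent bookkeeping is right ($r=t=\tfrac{2N}{2N-\mu}$ satisfies the HLS relation, and $pr,qt\in(2,2^*_s)$ is exactly equivalent to $\tw<p,q<\2$), the Nemytskii continuity $L^{pr}\to L^r$ is standard, and the telescoping HLS estimate with the uniform bound on $\||v_n|^q\|_{L^t}$ is routine once the strong $L^{pr}$/$L^{qt}$ convergence is in hand.

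The instructive point of the comparison is that your restriction to the radial class is not a convenience but a necessity, and it exposes an imprecision in the paper's statement. As literally stated --- arbitrary weak convergence in $\h$ --- the lemma is false: take $u,v$ nontrivial and $u_n = u(\cdot - ne_1)$, $v_n = v(\cdot - ne_1)$; then $(u_n,v_n)\rightharpoonup(0,0)$ in $\h$, while by translation invariance $\cnq$ equals the fixed positive number $\int_{\R^N}(I_\mu*|u|^p)|v|^q$ for every $n$, so it cannot tend to $0$. Hence any proof must invoke a compactness mechanism, and you have correctly located it in the embedding $H^s_{\mathrm{rad}}(\R^N)\hookrightarrow L^\theta(\R^N)$, $\theta\in(2,2^*_s)$, which is consistent with every use of the lemma in the paper (Proposition 3.2 takes its Palais--Smale sequences in $S_r(d_1,d_2)$, and Step 5 applies the lemma to differences of radial functions). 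In short: the cited Brezis--Lieb route buys symmetry-free validity of a weaker splitting statement; your route buys the statement the paper actually invokes, at the price of radiality. One caveat worth recording against your proof as written: the compactness of the radial embedding requires $N\ge 2$, whereas the paper's standing hypotheses ($N>2s$, and $\2=3$, i.e.\ $N<6s$) do admit $N=1$ for $s\in(\tfrac16,\tfrac12)$, a case in which radial symmetry yields no compactness and which would have to be excluded or treated separately.
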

\begin{proposition}\label{a3}
	Assume $\2=3$, and either
   $ 2\tw<p+q<4+\frac{4s-2\mu}{N}$ or $\frac{8s-2\mu}{N}<p+q<6=2\2.$
  Also consider, a sequence $\{(u_n,v_n)\} \subset S_r(d_1,d_2)$, that forms a Palais-Smale sequence for the functional $\J$, restricted to the set ${S\D}$ at energy level $c \ne 0$ such that 
	\begin{equation*}
	c<\frac{ab\shl^3}{2}+\frac{b^3\shl^6}{12}+\frac{2}{3}\left(\frac{b^2\shl^4}{4}+a\shl \right)^{\frac{3}{2}},
	\end{equation*} 
\begin{equation*}
	\Pa(u_n,v_n)\to 0 ~~as~~ n\to \infty,
	\end{equation*}
	and $u_n^- \to 0,v_n^- \to 0$ a.e. in $\R^N$.
    
	Then, up to a subsequence, one of the following alternatives must hold:
	\begin{itemize}
		\item[(i)] The sequence $\{\uvn$\} converges weakly, but not strongly, in $\h$ to some nontrivial limit $(u,v)$, that satisfies the system   
		\begin{equation}\label{A_1}
		\begin{cases}
		(a+bD_1)\tri^{s}u=\lambda_1u+(I_{\mu}*|v|^{\2})|u|^{\2-2}u +\alpha p (I_{\mu}*|v|^{q})|u|^{p-2}u  ~~\text{in}~\mathbb{R}^N,\\
		(a+bD_2)\tri^{s}v=\lambda_2v+
		(I_{\mu}*|u|^{\2})|v|^{\2-2}u +\alpha q(I_{\mu}*|u|^{p})|v|^{q-2}u ~~\text{in}~\mathbb{R}^N,
		\end{cases}
		\end{equation}
		for some $\lambda_1,\lambda_2 <0$, $D_1:=\lim\limits_{n\to \infty}[u_n]_s^2 >0$ and $D_2:= \lim\limits_{n\to \infty}[v_n]_s^2 >0.$ Moreover, the limits satisfy the inequality
        \begin{equation*}
            c-\frac{ab\shl^3}{2}+\frac{b^3\shl^6}{12}+\frac{2}{3}\left(\frac{b^2\shl^4}{4}+a\shl \right)^{\frac{3}{2}}  \ge E_{\alpha}(u,v),
        \end{equation*}
        where
        \begin{equation*}
        E_{\alpha}(u,v):=\left(\frac{a}{2}+\frac{D_1b}{4}\right)[u]_s^2+\left(\frac{a}{2}+\frac{bD_2}{4}\right)[v]_s^2-\frac{1}{3}\cq-\alpha \cpq.
        \end{equation*} 
		\item [(ii)]  The sequence $\{\uvn\}$ converges strongly in $\mathcal{H}$ to some $(u, v) \in \mathcal{H}$, and in this case, $(u, v) \in S(d_1, d_2)$, $\J(u, v) = c$, and $(u, v)$ is a solution of the system \eqref{G} for some $\lambda_1, \lambda_2 < 0$.
	\end{itemize}
	
\end{proposition}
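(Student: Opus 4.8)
The plan is a concentration--compactness analysis: bound the sequence, split off the weak limit (which solves the frozen system \eqref{A_1}) and a critical bubble, and bound the bubble energy below by the fine estimate.

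\emph{Boundedness and the limit equation.} Writing $A_n=[u_n]_s^2+[v_n]_s^2$ and $B_n=[u_n]_s^4+[v_n]_s^4$, I combine $\J(u_n,v_n)\to c$ with $\Pa(u_n,v_n)\to0$. In the supercritical regime $4<\dpq<6$, eliminating $\cpq$ between the two relations leaves an identity with strictly positive coefficients on $A_n$, $B_n$, $\cq$, so boundedness is immediate. In the subcritical regime $\dpq<2$, eliminating $\cq$ gives $2aA_n+\tfrac b2B_n=6c+\alpha(6-\dpq)\cpq+o_n(1)$; bounding $\cpq\lesssim A_n^{\dpq/2}$ via \eqref{Ai4} (with $\dpq/2<1$) and using $B_n\ge\tfrac12A_n^2$ makes the quartic term dominate. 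Along a subsequence I set $D_1=\lim[u_n]_s^2$, $D_2=\lim[v_n]_s^2$ and $(u_n,v_n)\rightharpoonup(u,v)$. The Lagrange rule gives multipliers $\lambda_{i,n}\to\lambda_i$ (bounded by testing with $(u_n,0)$ and $(0,v_n)$); the radial compactness $H^s_r\hookrightarrow\hookrightarrow L^t$, $2<t<2^*_s$, handles the subcritical coupling (Lemma \ref{a1}), and passing to the limit in the weak formulation---where the Kirchhoff coefficients converge to $a+bD_1$, $a+bD_2$---shows that $(u,v)$ solves \eqref{A_1}; moreover $u,v\ge0$ since $u_n^-,v_n^-\to0$.

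\emph{Energy splitting.} With $w_n^1=u_n-u$, $w_n^2=v_n-v$, put $K_i=\lim[w_n^i]_s^2$, $m_i=\lim\|w_n^i\|_2^2$ and $C_\infty=\lim\int_{\R^N}(I_\mu*|w_n^1|^3)|w_n^2|^3$, so that $D_1=[u]_s^2+K_1$, $D_2=[v]_s^2+K_2$. A coupled Brezis--Lieb splitting of the critical term, the Brezis--Lieb identities for the seminorms, and the full convergence of the subcritical term give
$$c-E_\alpha(u,v)=\tfrac a2(K_1+K_2)+\tfrac b4(D_1K_1+D_2K_2)-\tfrac13C_\infty,$$
while subtracting the Nehari identity of $(u,v)$ from the tested Palais--Smale relation yields the residual identities $(a+bD_i)K_i=C_\infty+\lambda_i m_i$.

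\emph{Dichotomy.} A concentration--compactness argument rules out $(u,v)=0$: since a critical Choquard bubble carries vanishing $L^2$ mass while the masses $d_i$ are positive, a trivial limit would force the residual to spread, killing $C_\infty$ and, through the Pohozaev relation, forcing $c=0$, against $c\neq0$. Hence $(u,v)\neq0$; then both components are nontrivial (a vanishing one forces the other to vanish in \eqref{A_1}), and a maximum principle---a nonnegative nontrivial $w$ with $\tri^sw\ge0$ must vanish---gives $\lambda_1,\lambda_2<0$ (consistently, combining the Nehari and Pohozaev identities of \eqref{A_1} yields $\lambda_1\|u\|_2^2+\lambda_2\|v\|_2^2=-\alpha\tfrac{(N-2s)(6-p-q)}{2s}\cpq\le0$). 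Now, if $C_\infty=0$, the residual identities together with $\lambda_i<0$ and $a+bD_i\ge a>0$ force $K_i=m_i=0$, so $(u_n,v_n)\to(u,v)$ strongly, $(u,v)\in S\D$ solves \eqref{G} and $\J(u,v)=c$: this is alternative (ii). If $C_\infty>0$, the residual is a concentrating bubble with $m_i=0$, so $(a+bD_i)K_i=C_\infty$; inserting \eqref{Ai2} and $D_i\ge K_i$ into the decomposition yields $K_1+K_2\ge b\shl^3+\sqrt{b^2\shl^6+4a\shl^3}$ and, after simplification,
$$c-E_\alpha(u,v)=\tfrac a4(K_1+K_2)+\tfrac16C_\infty\ \ge\ \frac{ab\shl^3}{2}+\frac{b^3\shl^6}{12}+\frac23\Big(\frac{b^2\shl^4}{4}+a\shl\Big)^{3/2},$$
which is the inequality in (i) and gives weak-but-not-strong convergence.

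\emph{Main obstacle.} The decisive difficulty is the loss of compactness of the critical Choquard coupling $\cq$: its defect $C_\infty$ must be captured by a coupled Brezis--Lieb lemma and shown to cost at least the threshold. The Kirchhoff term makes this delicate because the bubble inherits the nonlocal coefficient $a+bD_i=a+b([u]_s^2+K_i)$, which couples it to the regular part; disentangling this coupling to recover the explicit threshold, together with the careful bookkeeping of the $L^2$-mass defects $m_i$ that keeps the multiplier signs and the energy split clean, is the heart of the proof---precisely the content of the fine estimate (Lemma \ref{lemma_5.6}).
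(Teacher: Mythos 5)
Your overall architecture --- boundedness from combining $\J(u_n,v_n)\to c$ with $\Pa(u_n,v_n)\to0$, Lagrange multipliers, passage to the frozen limit system \eqref{A_1}, a coupled Brezis--Lieb splitting of the critical Choquard term, and a dichotomy on the residual defect $C_\infty$ --- is the same as the paper's. The genuine gap is at the decisive step, the exclusion of the trivial weak limit. You argue that if $(u,v)=(0,0)$ then ``a critical Choquard bubble carries vanishing $L^2$ mass, so the residual must spread, killing $C_\infty$ and forcing $c=0$.'' This does not work: the $L^2$ norm of a concentrating critical profile tends to $0$ only when $N>4s$ (compare Lemma \ref{Ak3}(iii)); the residual carrying the full masses $d_i^2$ does not prevent it from concentrating, since mass may simultaneously escape in the subcritical Lebesgue sense; and the critical term $\cq$ is controlled only by the Gagliardo seminorms via \eqref{Ai2}, so Lions-type spreading does not kill it. The tell-tale sign is that your argument never uses the hypothesis $c<\frac{ab\shl^3}{2}+\frac{b^3\shl^6}{12}+\frac{2}{3}\left(\frac{b^2\shl^4}{4}+a\shl \right)^{3/2}$, without which the conclusion is simply false. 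The paper spends this hypothesis exactly here (Step 3): if $u\equiv0$ or $v\equiv0$, the subcritical coupling vanishes by Lemma \ref{a1}, the limits $A=\lim a([u_n]_s^2+[v_n]_s^2)$ and $B=\lim b([u_n]_s^4+[v_n]_s^4)$ satisfy $c=\frac{A}{3}+\frac{B}{12}$, and \eqref{Ai2} forces $\zeta:=\left(\frac{A+B}{2}\right)^{1/3}\ge\frac{b\shl^2}{2}+\sqrt{\frac{b^2\shl^4}{4}+a\shl}$, whence $c\ge\frac{2a\shl}{3}\zeta+\frac{b\shl^2}{6}\zeta^2$ is at least the threshold, a contradiction.

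A second, smaller gap: in the case $C_\infty>0$ you assert that the $L^2$ defects vanish, $m_i=0$, to get $(a+bD_i)K_i=C_\infty$; nothing you wrote rules out $m_i>0$, since the weak limit only satisfies $\|u\|_2^2\le d_1^2$, $\|v\|_2^2\le d_2^2$. The paper avoids needing this: subtracting the Pohozaev identity $\tilde P_{\alpha}(u,v)=0$ of the limit system from $\Pa(u_n,v_n)\to0$ gives directly
\begin{equation*}
2C_\infty=\lim_{n\to\infty}\left((a+bD_1)[\tilde u_n]_s^2+(a+bD_2)[\tilde v_n]_s^2\right)\ge\lim_{n\to\infty}\left(a([\tilde u_n]_s^2+[\tilde v_n]_s^2)+b([\tilde u_n]_s^4+[\tilde v_n]_s^4)\right),
\end{equation*}
with no reference to the $L^2$ defects, and then the same $\zeta$-algebra bounds the residual energy from below by the threshold, yielding the inequality in alternative (i). You should replace both steps accordingly; also note that the fine estimate of Lemma \ref{lemma_5.6} is not part of this compactness argument but is used later to verify that the mountain-pass level lies below the threshold.
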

\begin{proof} The proof is divided into five steps.
	
	\textbf{Step 1:} $\{(u_n,v_v)\}$ is bounded in $\h$\par 
    
If $4+\frac{8s-2\mu}{N}< p+q<6$, then $\delta_p + \delta_q > 4$.
	Given that $\Pa(u_n,v_n)\to 0$ so, we have
	\begin{equation*}
	c+o_n(1)=\frac{a}{4}([u_n]_s^2+[v_n]_s^2)+\frac{1}{6}\cqn+\alpha\left(\frac{\dpq}{4}-1\right)\cnq.
	\end{equation*}
	
	Since $\delta_p + \delta_q \ge 4$, we get $\int_{\mathbb{R}^N}(I_{\mu}*|u_n|^p)|v_n|^q$, $\int_{\mathbb{R}^N}(I_{\mu}*|u_n|^3)|v_n|^3$ and $\frac{a}{4}([u_n]_s^2+[v_n]_s^2)$ are bounded. It results that, $\{(u_n,v_n)\}$ is bounded in $\h$.\par 
	
	If $2\tw<p+q<4+\frac{4s-2\mu}{N}$, then $0<\delta_p + \delta_q <2$. We have
	$$\frac{a}{3}([u_n]_s^2+[v_n]_s^2)+\frac{b}{12}([u_n]_s^4+[v_n]_s^4)=c+o_n(1)+\alpha\left(1-\frac{\dpq}{6}\right)\cnq.$$
	From \eqref{Ai4} it follows that
	\begin{align*}
	\frac{a}{3}([u_n]_s^2+[v_n]_s^2)+\frac{b}{12}([u_n]^4_s+[v_n]^4_s)\le c+o_n(1)
	+\alpha\left(1-\frac{\dpq}{6}\right) B_{N,p,q,s,\mu}(d_1^2+d_2^2)^{\frac{p+q-(\dpq)}{2}}([u_n]_s^2+[v_n]_s^2)^{\frac{\dpq}{2}},
	\end{align*}
	which gives $([u_n]_s^2+[v_n]_s^2)$ is bounded. Consequently, the sequence $\{(u_n,v_n)\}$ is bounded in $\h$.\par 
\textbf{Step 2:} There exist Lagrange multipliers $\lambda_{1,n},$ $ \lambda_{2,n}$\par 	Since $\{(u_n,v_n)\}$ forms a Palais-Smale sequence of $\J|_{S\D}$, according to the Lagrange's multipliers rule, there exists $\lambda_{1,n},$ $\lambda_{2,n} \in \R$, such that
	\begin{align}\label{A4}
	\notag\lambda_{1,n}\int_{\R^N}u_n\phi+\lambda_{2,n}\int_{\R^N}v_n\xi +o_n(1)=& (a+b[u_n]_s^2)\Aa(u_n,\phi)+(a+b[v_n]_s^2)\Aa(\xi,v_n) -\int_{\R^N}(I_{\mu}*|v_n|^3)|u_n|u_n\phi\\ \notag&-\int_{\R^N}(I_{\mu}*|u_n|^3)|v_n|v_n\xi
	-\alpha p \int_{\R^N}(I*|v_n|^q)|u_n|^{p-2}u_n\phi\\& -\alpha q\int_{\R^N}(I_{\mu}*|u_n|^p)|v_n|^{q-2}v_n\xi ,
	\end{align}
	for every $(\phi ,\xi) \in \h$, where $o_n(1)\to 0$ as $n \to \infty$. For $(\phi,\xi)=(u_n,v_n)$, we have
	\begin{align}\label{A5}
\lambda_{1,n}d_1^2+\lambda_{2,n}d^2_2+
o_n(1)= a([u_n]_s^2+[v_n]_s^2)+b([u_n]_s^4+[v_n]_s^4)-2\int_{\R^N}(I_{\mu}*|u_n|^3)|v_n|^3 	-\alpha (p+q) \int_{\R^N}(I*|v_n|^q)|u_n|^{p}.
\end{align}
By \eqref{A1} and \eqref{A5}, it follows that
\begin{equation*}
\lambda_{1,n}d_1^2+\lambda_{2,n}d^2_2=-\alpha(p+q-(\dpq)) \int_{\R^N}(I*|v_n|^q)|u_n|^{p}+o_n(1).
\end{equation*}
Since, $\{\uvn\}$ is bounded in $\h$, therefore both $\{\lambda_{1,n}\}$ and $\{\lambda_{2,n} \}$ are also  bounded in $\R$. Thus, there exist $(u, v)\in \h_r$, and $ \lambda_1, \lambda_2 \in \R$, such that $(u_n, v_n)  \rightharpoonup  (u, v) $ in $\h$, $(u_n,v_n) \to (u, v)$ a.e. in $\R^{2N}$ and $(\lambda_{1,n},\lambda_{2,n}) \to (\lambda_1,\lambda_2 )$ in $\R^2$. By the fact that $u_n^-\to 0,v_n^-\to 0$ a.e. in $\R^N$, we conclude  $u\ge 0,v\ge 0$. 

\textbf{Step 3:} $u\not\equiv0$ and $v\not\equiv 0 $\par
	Suppose, if possible, $u\equiv0$ or $v\equiv0$. Then, by Lemma \ref{a1}, we have 
    $$\int_{\R^N}(I_{\mu}*|v_n|^q)|u_n|^{p}\to 0.$$
    Therefore, we obtain
\begin{equation}\label{A7}
\J(u_n,v_n)=\frac{a}{2}([u_n]_s^2+[v_n]_s^2)+\frac{b}{4}([u_n]_s^4+[v_n]_s^4)
-\frac{1}{3}\cqn=c+o_n(1),
\end{equation}	
and 
\begin{equation}\label{A8}
 \Pa(u_n,v_n)=a([u_n]_s^2 + [v_n]_s^2) +b([u_n]_s^4+[v_n]_s^4) -2\cqn 
=o_n(1).
\end{equation}	
Let $A,B \ge 0$ be such that 
\begin{equation}\label{A9}
a([u_n]_s^2+[v_n]_s^2) \to A~\text{as}~n \to \infty
\end{equation}
and
\begin{equation}\label{A10}
b([u_n]_s^4+[v_n]_s^4) \to B~\text{as}~n \to \infty.
\end{equation}
Then $A$ and $B$ must be greater than 0, because if $A=0$ or $B=0$, we have $u_n\to 0$ and $v_n\to 0$ in $H^s(\R^N)$, which is a contradiction to our assumption that $c\ne0$.
From \eqref{A8}, we obtain that
\begin{equation}\label{A11}
\cqn \to \frac{A+B}{2}.
\end{equation}
Using \eqref{A9}, \eqref{A10} and \eqref{A11} in \eqref{A7} and taking $n\to \infty$, we have
\begin{equation}\label{A12}
\frac{A}{3}+\frac{B}{12}=c.
\end{equation}

Using \eqref{Ai2} and \eqref{A9}, one gets
\begin{align}\label{A13}
\notag \lim_{n\to\infty}a\shl\left(\int_{\R^N}(I_{\mu}*|u_n|^3)|v_n|^3\right)^{\frac{1}{3}}\le \frac{A}{2}\\
a\shl\left(\frac{A+B}{2} \right)^{\frac{1}{3}} \le \frac{A}{2}.
\end{align}
Similarly, using \eqref{Ai2} and \eqref{A10}, we have
\begin{equation}\label{A14}
b\shl^2\left(\frac{A+B}{2} \right)^{\frac{2}{3}} \le \frac{B}{2}.
\end{equation}
By \eqref{A13} and \eqref{A14} with $\zeta:=\left(\frac{A+B}{2} \right)^{\frac{1}{3}}$, we obtain
\begin{equation*}
\zeta^3-b\shl^2\zeta^2-a\shl\zeta \ge 0.
\end{equation*}
This gives us 
\begin{equation}\label{A16}
\zeta\ge\frac{b\shl^2}{2}+\sqrt{\frac{b^2\shl^4}{4}+a\shl}.
\end{equation}
Using \eqref{A12}-\eqref{A16}, we obtain
\begin{align*}
c&\ge \frac{2a\shl}{3}\left(\frac{A+B}{2} \right)^{\frac{1}{3}}+\frac{b\shl^2}{6}\left(\frac{A+B}{2} \right)^{\frac{2}{3}}\\
&=\frac{2a\shl}{3}\zeta+\frac{b\shl^2}{6}\zeta^2\\
&\ge \frac{ab\shl^3}{2}+\frac{b^3\shl^6}{12}+\frac{2}{3}\left(\frac{b^2\shl^4}{4}+a\shl \right)^{\frac{3}{2}}.
\end{align*}
This leads to a contradiction with our assumption on 
$c$, from which it follows that 
$u\not\equiv0$ and $v\not\equiv0$.
\par 
 
\textbf{Step 4:} $\lambda_1,\lambda_2 <0$.\par
 Suppose, if possible, $\lambda_1 \ge 0$. Since  $\uvn \rightharpoonup (u,v)$ weakly in $\h$, therefore we have
 \begin{align*}
 (a+bD_1)\tri^{s}u&=\lambda_1u+(I_{\mu}*|v|^{\2})|u|^{\2-2}u +\alpha p (I_{\mu}*|v|^{q})|u|^{p-2}u  ~~\text{in}~\mathbb{R}^N\\
 &\ge0.
 \end{align*}
Using the maximum principle \cite[Proposition 2.17]{silvestre2007regularity}, we have $u=0$, a contradiction.
So, we have $\lambda_1<0$. Similarly, $\lambda_2<0.$\par 

\textbf{Step 5:} Conclusion.\par 
 Since $\uvn \rightharpoonup (u,v)$ in $\h$ and $u\not\equiv 0,v\not\equiv 0$, $D_1:=\lim\limits_{n\to\infty}[u_n]_s^2 \ge [u]_s^2>0$ and $D_2:=
\lim\limits_{n \to \infty}[v_n]_s^2 \ge [v]_s^2 >0.$ Then, using \eqref{A4}, for every $(\phi,\xi) \in \h$, we get
\begin{align}\label{A6}
\notag (a+D_1b)\Aa(u,\phi)+(a+D_2b)\Aa(v,\xi) &-\int_{\R^N}(I_{\mu}*|v|^3)|u|u\phi-\int_{\R^N}(I_{\mu}*|u|^3)|v|v\xi
-p\alpha \int_{\R^N}(I*|v|^q)|u|^{p-2}u\phi\\&-q\alpha\int_{\R^N}(I_{\mu}*|u|^p)|v|^{q-2}v\xi =0.
\end{align}
That is, $(u,v)$ satisfies 
\begin{equation}\label{A50}
\begin{cases}
(a+bD_1)\tri^su=\lambda_1u+(I_{\mu}*|v|^{3})|u|u +\alpha p (I_{\mu}*|v|^{q})|u|^{p-2}u ) ~~\text{in}~\mathbb{R}^N,\\
(a+bD_2)\tri^2v=\lambda_2v+
(I_{\mu}*|u|^{3})|v|v +\alpha q(I_{\mu}*|u|^{p})|v|^{q-2}v )~~\text{in}~\mathbb{R}^N.
\end{cases}
\end{equation}
So the Pohozaev identity, corresponding to \eqref{A50}, is
\begin{align*}
\tilde{P}_{\alpha}(u,v):= (a+D_1b)[u]_s^2+(a+D_2b)[v]_s^2-2\int_{\R^N}(I_{\mu}*|v|^3)|u|^3-\alpha(\dpq)\cpq=0.
\end{align*}
Let  $\tilde{u}_n=u_n-u, ~\tilde{v}_n=v_n-v$, then $(\tilde{u}_n,\tilde{v}_n) \rightharpoonup (0,0)$  in $\h$ and $[u_n]_s^2=[u]_s^2+[\tilde{u}_n]_s^2+o_n(1)$, $[v_n]_s^2=[v]_s^2+[\tilde{v}_n]_s^2+o_n(1)$. By Brezis-Lieb lemma \cite{brezis1983relation}, we have
$$\int_{\R^N}(I_{\mu}*|u_n|^3)|v_n|^3=\cqt+\tcqn+o_n(1).$$
Since $\int_{\R^N}(I_{\mu}*|\tilde{u}_n|^p)|\tilde{v}_n|^q \to 0$ strongly, we have $\cnq=\cpq +o_n(1).$ So, $\Pa(u_n,v_n)$ can be written as
\begin{align*}{P}_{\alpha}(u_n,v_n)=(a+D_1b)[u_n]_s^2+(a+D_2b)[v_n]_s^2-\alpha(\dpq)\cnq-2 \cqn+o_n(1).
\end{align*}
From $\tilde{P}_{\alpha}(u,v)=0$, we obtain 
\begin{align}\label{A17}
\notag \lim\limits_{n\to\infty}2\tcqn&=\lim\limits_{n \to \infty}\left((a+bD_1)[\tilde{u}_n]_s^2+(a+bD_2)[\tilde{v}_n]_s^2 \right)\\
&\ge \lim\limits_{n \to \infty} \left( a([\tilde{u}_n]_s^2+[\tilde{v}_n]_s^2)+b([\tilde{u}_n]_s^4+[\tilde{v}_n]_s^4)\right).
\end{align}
Let $\tilde{A}:= a([\tilde{u}_n]_s^2+[\tilde{v}_n]_s^2)$ and $\tilde{B}:=b([\tilde{u}_n]_s^4+[\tilde{v}_n]_s^4)$. Using \eqref{Ai2} and \eqref{A17}, we deduce 
\begin{equation*}
a\shl\left(\frac{\tilde{A}+\tilde{B}}{2} \right)^{\frac{1}{3}}+b\shl^2\left(\frac{\tilde{A}+\tilde{B}}{2} \right)^{\frac{2}{3}}\le \left(\frac{\tilde{A}+\tilde{B}}{2} \right).
\end{equation*}
Consider $\tilde{\zeta}:=\left(\frac{A+B}{2} \right)^{\frac{1}{3}}$. Then, we have 
$$\tilde{\zeta}^3\ge a\shl\tilde{\zeta}+b\shl^2\tilde{\zeta}^2$$ 
and
\begin{align}
\notag \lim\limits_{n\to \infty} \left(\frac{\tilde{A}+\tilde{B}}{2} \right) \le \lim\limits_{n\to\infty} \int_{\R^N}(I_{\mu}*|\tilde{u}_n|^3)|\tilde{v}_n|^3 \le \frac{1}{\shl^3}\lim_{n\to\infty}\left(\frac{[\tilde{u}_n]_s^2+[\tilde{v}_n]_s^2}{2}\right)^3.
\end{align}
We get $\displaystyle\tcqn \ge {\tilde{\zeta}}^3$ and $\lim\limits_{n\to \infty}\left([\tilde{u}_n]_s^2+[\tilde{v}_n]_s^2\right)\ge 2\shl\tilde{\zeta}$ (where $\tilde{\zeta}\ge\frac{b\shl^2}{2}+\sqrt{\frac{b^2\shl^4}{4}+a\shl}$) or $\displaystyle\tcqn =0=\lim\limits_{n\to\infty}\left([\tilde{u}_n]_s^2+[\tilde{v}_n]_s^2\right)$.

\begin{itemize}
	\item[\textit{(i)}] If $\tcqn \ge \tilde{\zeta}^3$, then we have:
	\begin{align*}
c&=\lim\limits_{n\to\infty}\J(u_n,v_n)\\&=E_{\alpha}(u,v)+\lim\limits_{n\to\infty} \left(  \left(\frac{a}{2}+\frac{D_1b}{4}\right)[\tilde{u}_n]_s^2+\left(\frac{a}{2}+\frac{D_2b}{4}\right)[\tilde{v}_n]_s^2-\frac{1}{3}\tcqn \right)\\
	&\ge E_{\alpha}(u,v)+\lim\limits_{n\to\infty}\left(\frac{a}{3}\left([\tilde{u}_n]_s^2+[\tilde{v}_n]_s^2\right)+\frac{b}{12}\left([\tilde{u}_n]_s^4+[\tilde{v}_n]_s^4\right)\right)\\
	&\ge E_{\alpha}(u,v)+\frac{ab\shl^3}{2}+\frac{b^3\shl^6}{12}+\frac{2}{3}\left(\frac{b^2\shl^4}{4}+a\shl \right)^{\frac{3}{2}} 
	\end{align*} 
	where $E_{\alpha}(u,v):=(\frac{a}{2}+\frac{D_1b}{4})[u]_s^2+(\frac{a}{2}+\frac{bD_2}{4})[v]_s^2-\frac{1}{3}\int_{\R^N}(I_{\mu}*|v|^3)|u|^3-\alpha \cpq$. Alternative \textit{(i)} is satisfied in this case.  
	\item[\textit{(ii)}] If $\tcqn = 0$, then by relation \eqref{A17}, it follows that $[\tilde{u}]_s^2 = 0$ and $[\tilde{v}]_s^2 = 0$, which implies $(u_n,v_n) \to (u, v)$ in $D^s(\R^N)\times D^s(\R^N)$. 
     Testing \eqref{A4} and \eqref{A6} with $(\phi,\xi)=(u_n-u,v_n-v)$ yields
	$$(a+D_1b)[u_n-u]_s^2+(a+D_2b)[v_n-v]_s^2-\lambda_1\|u_n-u\|^2_2-\lambda_2\|v_n-v\|_2^2\to 0.$$ In this case, alternative \textit{(ii)} holds.
\end{itemize}
\end{proof}
\maketitle
\section{Proof of Theorem \ref{AAA}}\label{S4}
This section is devoted to the proof of Theorem~\ref{AAA}, which establishes the existence and asymptotic behavior of normalized ground states for problem \eqref{G}.  We begin by analyzing the Pohozaev manifold $\mathcal{P}\D$ and investigating the geometric properties of the associated energy functional.
\begin{lemma}\label{a5}
	Let $2\tw <p+q<4+\frac{4s-2\mu}{N}$ and $0<\alpha <\alpha^*$ then, in $\h,~ \mathcal{P}\D $ is a smooth submanifold and $\mathcal{P}^{0}\D =\emptyset$.
\end{lemma}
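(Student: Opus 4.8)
The plan is to prove the stronger statement $\mathcal{P}^{0}\D=\emptyset$ first and then to deduce the smoothness of $\mathcal{P}\D$ from it by a regular-value argument. Throughout set $T:=[u]_s^2+[v]_s^2$, $K:=[u]_s^4+[v]_s^4$, $C:=\cq$ and $P:=\cpq$, and recall that here $\2=3$ and that the assumption $2\tw<p+q<4+\frac{4s-2\mu}{N}$ is precisely the statement $0<\dpq<2$, so that $2-\dpq$, $4-\dpq$, $2\2-\dpq$ and $\2-2$ are all positive. Since $(\f)'(0)=s\,\Pa(u,v)$, a point $(u,v)\in\mathcal{P}^{0}\D$ must satisfy the two identities $(\f)'(0)=0$ and $(\f)''(0)=0$, i.e.
\begin{align*}
aT+bK-2C-\alpha\,(\dpq)\,P&=0,\\
2aT+4bK-4\2\,C-\alpha\,(\dpq)^2\,P&=0.
\end{align*}

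The heart of the matter is to eliminate $C$ and $P$ in turn. Subtracting $\dpq$ times the first identity from the second cancels $P$ and gives $(2-\dpq)aT+(4-\dpq)bK=2(2\2-\dpq)C$; subtracting $2\2$ times the first from the second cancels $C$ and gives $\alpha(\dpq)(2\2-\dpq)P=(2\2-2)aT+(2\2-4)bK$. Into these I then substitute the three available bounds: the elementary $K\ge\frac12T^2$, the Hardy--Littlewood--Sobolev estimate $C\le T^{\2}/(2\shl)^{\2}$ coming from \eqref{Ai2}, and the estimate $P\le B_1T^{\dpq/2}$ valid on $S\D$ from \eqref{Ai4}. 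Discarding the nonnegative $aT$ terms and dividing by the appropriate power of $T>0$, the first relation produces a lower bound $T^{\,\2-2}\ge \frac{(4-\dpq)b(2\shl)^{\2}}{4(2\2-\dpq)}$, while the second produces an upper bound $(\2-2)bT^{\,2-\frac{\dpq}{2}}\le \alpha(\dpq)(2\2-\dpq)B_1$.

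Raising the lower bound to the power $\frac{4-\dpq}{2(\2-2)}$ and inserting it into the upper bound --- legitimate because $2-\frac{\dpq}{2}>0$ --- forces $\alpha\ge\alpha^*$, where $\alpha^*$ is exactly the constant defined in \eqref{al_2}; this contradicts the hypothesis $\alpha<\alpha^*$, so $\mathcal{P}^{0}\D=\emptyset$. Note that $T>0$ is automatic, since $\|u\|_2=d_1$ and $\|v\|_2=d_2$ are positive on $S\D$. I expect the only real work in this part to be bookkeeping: tracking the signs of all coefficients (this is where $0<\dpq<2$ is essential) and verifying that the final threshold reassembles \eqref{al_2} factor by factor.

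To upgrade $\mathcal{P}^{0}\D=\emptyset$ into smoothness of $\mathcal{P}\D$, I will show that $0$ is a regular value of $\Psi(u,v):=\big(\Pa(u,v),\,\|u\|_2^2-d_1^2,\,\|v\|_2^2-d_2^2\big)$ along $\mathcal{P}\D$. By Corollary \ref{a0} the scaling map is $C^1$, so $w:=\frac{d}{dk}\big(k\star(u,v)\big)\big|_{k=0}$ is well defined and, because $\star$ preserves the $L^2$-norms, tangent to $S\D$; the group law for $\star$ yields $\langle\Pa'(u,v),w\rangle=\frac1s(\f)''(0)$. Since $\mathcal{P}^{0}\D=\emptyset$, this is nonzero at every point of $\mathcal{P}\D$, so $\Pa'$ does not vanish on a vector killed by both $d(\|u\|_2^2)$ and $d(\|v\|_2^2)$. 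As these two functionals are themselves independent (each involves only one of the nonzero components $u$, $v$), the three differentials are linearly independent, $\Psi$ is a submersion along $\mathcal{P}\D$, and hence $\mathcal{P}\D=\Psi^{-1}(0)$ is a smooth submanifold of $\h$ of codimension three.
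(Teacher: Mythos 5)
Your argument that $\mathcal{P}^{0}\D=\emptyset$ is exactly the paper's: the same two identities from $(\f)'(0)=(\f)''(0)=0$, the same elimination of $\cpq$ and $\cq$, the same three bounds ($[u]_s^4+[v]_s^4\ge\frac12([u]_s^2+[v]_s^2)^2$, \eqref{Ai2}, \eqref{Ai4}), and the same incompatible lower and upper bounds on $[u]_s^2+[v]_s^2$ forcing $\alpha\ge\alpha^*$. For the submanifold claim the paper simply defers to a cited lemma, whereas your regular-value sketch supplies a proof; the only caveat is that $w=\frac{d}{dk}\big(k\star(u,v)\big)\big|_{k=0}=\big(\tfrac N2u+x\cdot\nabla u,\ \tfrac N2v+x\cdot\nabla v\big)$ need not belong to $\h$ for arbitrary $(u,v)\in\h$, so to make the independence of the three differentials rigorous one should either approximate $w$, or argue in the standard contrapositive way: if $d\Pa(u,v)$ were a linear combination of $d(\|u\|_2^2)$ and $d(\|v\|_2^2)$, then $(u,v)$ would solve an Euler--Lagrange system whose own Pohozaev identity gives $(\f)''(0)=0$, contradicting $\mathcal{P}^{0}\D=\emptyset$.
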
	
\begin{proof}
First, we assert that $\mathcal{P}^{0}\D =\emptyset$. In this absence, $(u,v)\in \mathcal{P}^{0}\D $ exists. Given $\Pa(u,v) = 0$ and $(\f)''(0) = 0$, we obtain
\begin{equation*}
		a([u]_s^2+[v]_s^2)+b([u]_s^4+[v]_s^4)=2\cq+\alpha(\dpq)\cpq,
		\end{equation*}
		and
		\begin{equation*}
		2a([u]_s^2 +[v]_s^2)+4b([u]_s^4+[v]_s^4)=\2 4\cq+\alpha(\dpq)^2\cpq.
		\end{equation*}
		By solving the above equation and using \eqref{Ai2}, \eqref{Ai4} we have
		\begin{align*}
	 a(2-(\dpq))([u]_s^2+[v]_s^2)+b(4-(\dpq))([u]_s^4+[v]_s^4)
     &=2(2\2-(\dpq))\cq\\
	 &\le 2(2\2-(\dpq))\left(\frac{[u]_s^2+[v]_s^2}{2\shl}   \right)^{\2},
		\end{align*}
		and
		\begin{align*}
		4a([u]_s^2+[v]_s^2)+2b([u]_s^4+[v]_s^4)&=\alpha(\dpq)(2\2-(\dpq))\cpq\\
	&\le \alpha(\dpq)(2\2 -(\dpq))B_{N,p,q,s,\mu}(d_1^2+d_2^2)^{\frac{p+q-(\dpq)}{2}}([u]_s^2+[v]_s^2)^{\frac{\dpq}{2}}.
		\end{align*}
		Subsequently, lower and upper bounds  of $([u]_s^2+[v]_s^2)$ are provided by
		\begin{align*}
		([u]_s^2+[v]_s^2)\ge \left(\dfrac{b(4-(\dpq))}{4(2\2 -(\dpq))}(2\shl)^{\2}\right) ^{\frac{1}{\2-2}}
    	\end{align*}
	    \begin{align*}
		([u]_s^2+[v]_s^2) \le \Big( \dfrac{\alpha(\dpq)(2\2 -(\dpq))B_{N,p,q,s,\mu}(d_1^2+d_2^2)^{\frac{p+q-(\dpq)}{2}}}{b}\Big)^{\frac{2}{4-(\dpq)}}
		\end{align*}
        which implies
	\begin{align*}
		\alpha \ge \left(\dfrac{b(4-(\dpq))}{4(2\2 -(\dpq))}(2\shl)^{\2 }  \right)^{\frac{4-(\dpq)}{2(\2-2)}}\left( \dfrac{b}{(\dpq)(2\2 -(\dpq))B_{N,p,q,s,\mu}(d_1^2+d_2^2)^{\frac{p+q-(\dpq)}{2}}}\right)= \alpha^*,
	\end{align*}
		which is a contradiction to $\alpha < \alpha^*$. Hence, our assumption is wrong, consequently $\mathcal{P}^0\D=\emptyset
        .$ \\
		It is easy to verify that $\mathcal{P}\D$ is a sub-manifold of $\h$ of co-dimension 3, like in the proof of  \cite[Lemma 4.2]{hu2023normalized}.
	\end{proof}
 

\begin{lemma}
	Assume $2\tw <p+q <4+\frac{4s-2\mu}{N}$ and $\alpha \in(0,\alpha_{*})$. If $(u,v)\in \mathcal{P}(d_1,d_2)$ is a critical point for $\J|_{\mathcal{P}\D} $, then it also becomes a critical point for $\J|_{S\D}$. 
\end{lemma}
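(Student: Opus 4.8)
The plan is to run the classical natural-constraint argument: show that $\mathcal{P}\D$ is a natural constraint by proving that the Lagrange multiplier attached to the Pohozaev constraint vanishes. Since $(u,v)$ is a critical point of $\J|_{\mathcal{P}\D}$ and, by Lemma \ref{a5}, $\mathcal{P}\D$ is a smooth submanifold of codimension three cut out by the three independent functionals $\|u\|_2^2-d_1^2$, $\|v\|_2^2-d_2^2$ and $\Pa$, the Lagrange multiplier rule yields $\lambda_1,\lambda_2,\nu\in\R$ with
\begin{equation*}
\langle\J'(u,v),(\phi,\xi)\rangle=\lambda_1\int_{\R^N}u\phi\dx+\lambda_2\int_{\R^N}v\xi\dx+\nu\,\langle\Pa'(u,v),(\phi,\xi)\rangle\qquad\text{for all }(\phi,\xi)\in\h.
\end{equation*}
The goal is to prove $\nu=0$, since then this identity says exactly that $(u,v)$ is a critical point of $\J|_{S\D}$ with Lagrange multipliers $\lambda_1,\lambda_2$.

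To isolate $\nu$, I would test the identity against the generator of the $L^2$-invariant scaling, namely $(\phi_0,\xi_0):=\frac{d}{dk}\big|_{k=0}\big(k\star(u,v)\big)\in\h$, which is well defined because $k\mapsto k\star(u,v)$ is of class $\mathcal{C}^1$ (Corollary \ref{a0}). Three facts then collapse the tested identity. First, the scaling preserves the masses, $\|k\star u\|_2^2\equiv d_1^2$ and $\|k\star v\|_2^2\equiv d_2^2$; differentiating at $k=0$ gives $\int_{\R^N}u\phi_0\dx=\int_{\R^N}v\xi_0\dx=0$, so the two mass terms disappear. Second, by the chain rule $\langle\J'(u,v),(\phi_0,\xi_0)\rangle=\f'(0)$, which vanishes because $(u,v)\in\mathcal{P}\D$ is equivalent to $\f'(0)=0$ (Corollary \ref{a0}). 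Third, the scaling laws $[k\star u]_s^2=e^{2sk}[u]_s^2$ together with the homogeneities of the two nonlocal terms give the clean identity $\Pa(k\star(u,v))=\tfrac{1}{s}\f'(k)$; differentiating at $k=0$ yields $\langle\Pa'(u,v),(\phi_0,\xi_0)\rangle=\tfrac{1}{s}\f''(0)$.

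Putting the three facts together, the tested identity reduces to $0=\tfrac{\nu}{s}\,\f''(0)$. Since $\mathcal{P}^0\D$ is precisely the set where $\f''(0)=0$ and $\mathcal{P}^0\D=\emptyset$ by Lemma \ref{a5}, we have $\f''(0)\neq0$, hence $\nu=0$. Feeding $\nu=0$ back into the multiplier identity shows that $(u,v)$ is a critical point of $\J|_{S\D}$, as claimed. The routine content is the three scaling computations; the one point needing genuine care is justifying the identity $\Pa(k\star(u,v))=\tfrac1s\f'(k)$ and the $\mathcal{C}^1$-differentiability of $k\mapsto k\star(u,v)$ into $\h$, which together legitimize the passage $\langle\Pa'(u,v),(\phi_0,\xi_0)\rangle=\tfrac1s\f''(0)$. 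The only conceptual input beyond this bookkeeping is the non-degeneracy $\f''(0)\neq0$, which is exactly what the emptiness of $\mathcal{P}^0\D$ supplies, so the decisive estimate has already been carried out in Lemma \ref{a5}.
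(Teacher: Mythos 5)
Your strategy is the same as the paper's in substance: both proofs introduce a third Lagrange multiplier $\nu$ (the paper calls it $\gamma$) attached to the constraint $\Pa=0$, reduce everything to the single scalar identity $\nu\cdot\f''(0)=0$, and conclude $\nu=0$ from $\mathcal{P}^0\D=\emptyset$ (Lemma \ref{a5}). Where you differ is in how that scalar identity is extracted. You test the multiplier identity against the infinitesimal generator of the dilation flow, $(\phi_0,\xi_0)=\frac{d}{dk}\big|_{k=0}k\star(u,v)=\bigl(\tfrac N2 u+x\cdot\nabla u,\ \tfrac N2 v+x\cdot\nabla v\bigr)$, and invoke the chain rule twice. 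The paper instead reads the multiplier identity as the Euler--Lagrange system \eqref{A66} satisfied by $(u,v)$, writes down the Pohozaev identity \eqref{A19} for that system, and subtracts $\Pa(u,v)=0$ to obtain $\gamma\bigl(2a([u]_s^2+[v]_s^2)+4b([u]_s^4+[v]_s^4)-4\2\cq-\alpha(\dpq)^2\cpq\bigr)=0$, whose bracket is exactly $\f''(0)/s^2$.

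The difference is not cosmetic: your route has a genuine gap precisely at the step you flag as "needing genuine care." For a general $(u,v)\in\h$, the candidate derivative $x\cdot\nabla u$ need not belong to $H^s(\R^N)$, so $(\phi_0,\xi_0)$ is not an admissible test pair and the curve $k\mapsto k\star(u,v)$ is not known to be differentiable \emph{into} $\h$. The citation you offer does not repair this: Corollary \ref{a0} asserts that $(u,v)\mapsto k\star(u,v)$ is $\mathcal{C}^1$ in the pair $(u,v)$ for fixed $k$, not that $k\mapsto k\star(u,v)$ is $\mathcal{C}^1$ in $k$. (The scalar function $\f$ is of course smooth in $k$, because each term scales by an explicit exponential; what fails is the identification of $\f'(0)$ and $\f''(0)$ with pairings $\langle\J'(u,v),(\phi_0,\xi_0)\rangle$ and $\langle\Pa'(u,v),(\phi_0,\xi_0)\rangle$ via the chain rule.) To close the gap you would need either additional regularity and decay of the critical point $(u,v)$ (obtainable from elliptic regularity for the Euler--Lagrange system, but this must be established first) or a truncation/approximation argument — which is exactly the content of a rigorous Pohozaev identity. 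The paper's detour through \eqref{A66} and its Pohozaev identity is the standard device for sidestepping this issue, and that is the step your proposal is missing.
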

\begin{proof}
	If $(u,v)\in \mathcal{P}\D$ is a critical point for $\J|_{\mathcal{P}\D}$, by the Lagrange multiplier's rule, we obtain that there exists $\lambda_1,\lambda_2,\gamma \in \R$ such that for every
	$(\phi,\xi)\in \h$, we have 
	\begin{equation*}
	\langle (\J)'(u,v)(\phi,\xi)\rangle =\lambda_1\int_{\R^N}u\phi+\lambda_2\int_{\R^N}v\xi+ \gamma\langle(\mathcal{P}\D)'(u,v)(\phi,\xi)\rangle.
	\end{equation*}
 This means that $(u,v)$ solves the following equation
 \begin{align}\label{A66}
 \begin{cases}
\big((1-2\gamma)a+(1-4\gamma)b\int_{\R^N}|\tri^{\s}u|^2\big)\tri^su=&\lambda_1u+\alpha p(1-\gamma(\dpq))\int_{\R^N}(I_{\mu}*|v|^{q})|u|^{p-2}u\\
&+(1-2\2\gamma )\int_{\R^N}(I_{\mu}*|v|^{\2})|u|^{\2-2}u)\ \text{in}~\R^N,\\
  \big((1-2\gamma)a+(1-4\gamma)b\int_{\R^N}|\tri^{\s}v|^2\big)\tri^sv=&\lambda_2v
  +\alpha q(1-\gamma(\dpq))\int_{\R^N}(I_{\mu}*|u|^{p})|v|^{q-2}v\\&+(1-2\2\gamma )\int_{\R^N}(I_{\mu}*|u|^{\2})|v|^{\2-2}v)\ \text{in}~\R^N.
 \end{cases}
 \end{align}
 Pohozaev identity corresponding to \eqref{A66}, is
\begin{align}\label{A19}
\notag a(1-2\gamma)([u]_s^2 + [v]_s^2) +b(1-4\gamma)([u]_s^4+[v]_s^4) -\alpha(\delta_p +\delta_q)(1-\gamma(\dpq ))\cpq\\
-2(1-2\2\gamma )\cq =0.
\end{align}
Since $(u,v)\in \mathcal{P}\D$, and from \eqref{A19}, we have
\begin{align*}
\gamma\big(2a([u]_s^2+[v]_s^2)+4b([u]_s^4+[v]_s^4)-\24\cq -\alpha(\dpq)^2\cpq\big)=0.
\end{align*}
 From Lemma \ref{a5}, we deduce that $\mathcal{P}^{0}\D =\emptyset$. Hence, $\gamma=0$, which proves the lemma.
\end{proof}
To find the locations of critical points for $\J |_{S\D}$, with the of help \eqref{Ai2}, \eqref{Ai4}, we have
	\begin{align}\label{A20}
\J(u,v) &\ge \frac{a}{2}\Big([u]_s^2+[v]_s^2  \Big)+ \frac{b}{8}\Big([u]_s^2+[v]_s^2  \Big)^2-\frac{1}{\2(2\shl)^{\2 }} \Big([u]_s^2+[v]_s^2  \Big)^{\2}
\\
&-\alpha B_{N,p,q,s,\mu}(d_1^2+d_2^2)^{\frac{p+q-(\dpq)}{2}}\Big([u]_s^2+[v]_s^2  \Big)^{\frac{\dpq}{2}},
	\end{align}
$\forall(u,v)\in \h.$ For understanding the geometry of $\J|_{S\D}$, we introduce the function $g:\R^+\to \R$:
\begin{equation*}
g(t)=\frac{a}{2}t^2+\frac{b}{8}t^4-\frac{1}{\2(2\shl)^{\2 }}t^{2\2}-\alpha B_{N,p,q,s,\mu}(d_1^2+d_2^2)^{\frac{p+q-(\dpq)}{2}}t^{\dpq},
\end{equation*}	
where $(\dpq)<2$ and $\2>2$. We have that $g(0^+)=0^-$ and $g(+\infty)=-\infty$.
	\begin{lemma}\label{a7} \cite[Lemma 4.3]{li2022normalized}
	Let $\tilde{a},\tilde{b},\tilde{c},\tilde{d},\tilde{p},\tilde{q}>0$
	and 
	$f(t):=\tilde{a}t^2+\tilde{b}^4-\tilde{c}t^{\tilde{p}}-\tilde{d}t^{\tilde{q}}$ for $t\ge0$. If $\tilde{p}>4, \tilde{q}\in (0,2)$ and 
    \begin{equation}\label{A101}\left( \left(\frac{8(4-\tilde{q})}{\tilde{p}(\tilde{p}-2)(\tilde{p}-\tilde{q})}\right)^{\frac{4-\tilde{q}}{\tilde{p}-4}}-\left(\frac{8(4-\tilde{q})}{\tilde{p}(\tilde{p}-2)(\tilde{p}-\tilde{q})}    \right)^{\frac{\tilde{p}-\tilde{q}}{\tilde{p}-4}}   \right)\left(\frac{\tilde{a}}{\tilde{d}}\left( \frac{\tilde{b}}{\tilde{c}}  \right)^{\frac{2-\tilde{q}}{\tilde{p}-4}}+\frac{1}{\tilde{d}}\frac{\tilde{b}^{\frac{\tilde{p}-\tilde{q}}{\tilde{p}-4 }} }{c ^{\frac{4-\tilde{q}}{ \tilde{p}-4}  }}  \right)  >1.
    \end{equation}
    Then, $f(t)$ has a local strict minimum at the negative level and a global strict maximum at the positive level on $[0,+\infty)$.             
	\end{lemma}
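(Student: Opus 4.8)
The plan is to reduce the claim to elementary sign analysis of two one–variable auxiliary functions built from $f$, after correcting the evident typo so that $f(t)=\tilde{a}t^{2}+\tilde{b}t^{4}-\tilde{c}t^{\tilde{p}}-\tilde{d}t^{\tilde{q}}$. Since $\tilde{q}<2<4<\tilde{p}$ and $\tilde{a},\tilde{b},\tilde{c},\tilde{d}>0$, the lowest–order term near the origin is $-\tilde{d}t^{\tilde{q}}$, whence $f(0)=0$ and $f(t)<0$ for small $t>0$; the highest–order term at infinity is $-\tilde{c}t^{\tilde{p}}$, whence $f(t)\to-\infty$. The asserted shape (a strict local minimum at a negative level and a strict global maximum at a positive level) will follow once I (a) bound the number of positive critical points of $f$ by two, and (b) exhibit a point where $f>0$; step (b) is exactly where hypothesis \eqref{A101} is consumed.

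For (a) I would study $G(t):=f'(t)/t=2\tilde{a}+4\tilde{b}t^{2}-\tilde{p}\tilde{c}t^{\tilde{p}-2}-\tilde{q}\tilde{d}t^{\tilde{q}-2}$ on $(0,\infty)$, whose zeros are the positive critical points of $f$. Ordering its four power–terms by exponent ($\tilde{q}-2<0<2<\tilde{p}-2$) gives the sign pattern $(-,+,+,-)$, i.e.\ exactly two sign changes, so by the generalized Descartes' rule of signs (valid for real exponents via repeated use of Rolle's theorem) $G$ has either $0$ or $2$ positive zeros. Since $G(0^{+})=-\infty$ (because $\tilde{q}-2<0$) and $G(+\infty)=-\infty$ (because $\tilde{p}-2>0$), in the first case $G<0$ throughout, so $f$ is strictly decreasing and takes no positive value; in the second case there are simple zeros $0<t_{1}<t_{2}$ with $G<0$ on $(0,t_{1})$, $G>0$ on $(t_{1},t_{2})$, $G<0$ on $(t_{2},\infty)$. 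Then $f$ is strictly decreasing–increasing–decreasing, so $t_{1}$ is a strict local minimum with $f(t_{1})<f(0)=0$ and $t_{2}$ is a strict global maximum on $[0,\infty)$. It therefore only remains to rule out the monotone alternative.

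For (b) I divide by $t^{\tilde{q}}$: the inequality $f(t)>0$ is equivalent to $\Psi(t):=\tilde{a}t^{2-\tilde{q}}+\tilde{b}t^{4-\tilde{q}}-\tilde{c}t^{\tilde{p}-\tilde{q}}>\tilde{d}$. I would test this at the explicit point $\bar{t}:=\left(\kappa\,\tilde{b}/\tilde{c}\right)^{1/(\tilde{p}-4)}$, where $\kappa:=\frac{8(4-\tilde{q})}{\tilde{p}(\tilde{p}-2)(\tilde{p}-\tilde{q})}$, noting that $\kappa\in(0,1)$ by $\tilde{p}>4$ and $\tilde{q}<2$. At $\bar{t}$ one has $\tilde{c}\bar{t}^{\tilde{p}-\tilde{q}}=\kappa\,\tilde{b}\bar{t}^{4-\tilde{q}}$, so the top two terms collapse to $(1-\kappa)\tilde{b}\bar{t}^{4-\tilde{q}}$ and a direct substitution gives
\[
\Psi(\bar{t})=\tilde{a}\,\kappa^{\frac{2-\tilde{q}}{\tilde{p}-4}}\left(\frac{\tilde{b}}{\tilde{c}}\right)^{\frac{2-\tilde{q}}{\tilde{p}-4}}+\left(\kappa^{\frac{4-\tilde{q}}{\tilde{p}-4}}-\kappa^{\frac{\tilde{p}-\tilde{q}}{\tilde{p}-4}}\right)\frac{\tilde{b}^{\frac{\tilde{p}-\tilde{q}}{\tilde{p}-4}}}{\tilde{c}^{\frac{4-\tilde{q}}{\tilde{p}-4}}}.
\]
Because $\kappa<1$ and $\frac{2-\tilde{q}}{\tilde{p}-4}<\frac{4-\tilde{q}}{\tilde{p}-4}$ one has $\kappa^{(2-\tilde{q})/(\tilde{p}-4)}\ge\kappa^{(4-\tilde{q})/(\tilde{p}-4)}(1-\kappa)$, so $\Psi(\bar{t})$ is bounded below by $\tilde{d}$ times the left–hand side of \eqref{A101}; under \eqref{A101} this forces $\Psi(\bar{t})>\tilde{d}$, i.e.\ $f(\bar{t})>0$. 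This excludes the monotone case, so by (a) we are in the two–critical–point situation, and $f(t_{2})\ge f(\bar{t})>0$ while $f(t_{1})<0$, as claimed; strictness of the extrema comes from the simplicity of the two sign changes of $G$.

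I expect the only delicate point to be step (b). The test point $\bar{t}$ is deliberately chosen rather than optimal: the exact maximizer of $\Psi$ solves a transcendental equation for general $\tilde{p}$, so \eqref{A101} emerges as a clean sufficient condition, not a sharp one. The reduction succeeds precisely because one keeps the positive $\tilde{a}$–term as a free bonus and uses the elementary monotonicity of $\kappa\mapsto\kappa^{r}$ on $(0,1)$. Step (a) and the final assembly are routine once the generalized Descartes' rule is invoked.
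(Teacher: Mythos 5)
Your proof is correct. Note that the paper itself offers no argument for this lemma: it is imported verbatim (typos included, e.g.\ $\tilde{b}^4$ for $\tilde{b}t^4$ and the stray $c$ for $\tilde{c}$ in \eqref{A101}) by citation to Lemma 4.3 of the reference, so there is no in-paper proof to compare against. Your two-step scheme is sound and self-contained: the critical-point count via $G(t)=f'(t)/t$ with sign pattern $(-,+,+,-)$ and the boundary behaviour $G(0^+)=G(+\infty)=-\infty$ correctly forces the decreasing--increasing--decreasing profile once a positive value of $f$ is exhibited (if you prefer to avoid invoking the generalized Descartes rule, the same bound follows by noting that $t^{3-\tilde{q}}G'(t)=8\tilde{b}t^{4-\tilde{q}}+\tilde{q}(2-\tilde{q})\tilde{d}-\tilde{p}(\tilde{p}-2)\tilde{c}\,t^{\tilde{p}-\tilde{q}}$ has exactly one positive zero). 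The test-point computation is also right: with $\kappa=\frac{8(4-\tilde{q})}{\tilde{p}(\tilde{p}-2)(\tilde{p}-\tilde{q})}\in(0,1)$ one indeed gets $\kappa^{\frac{2-\tilde{q}}{\tilde{p}-4}}>\kappa^{\frac{4-\tilde{q}}{\tilde{p}-4}}(1-\kappa)$, so $\Psi(\bar{t})$ dominates $\tilde{d}$ times the left-hand side of \eqref{A101}, and the hypothesis yields $f(\bar{t})>0$; the final assembly ($f(t_1)<0$ strict local minimum, $f(t_2)\ge f(\bar{t})>0$ strict global maximum) is complete.
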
	
\begin{lemma}\label{a30}
		Assume $2\tw <p+q <4+\frac{4s-2\mu}{N}$ and $\alpha\in(0,\alpha_{*})$ then, the function $g$ has a local minimum at the negative level and a global strict maximum at the positive level. Moreover, depending on $d_1, d_2$ and $\mu$, there exist $0<R_*<R^*$, such that $g(R_*)=g(R^*)=0$ and $g(t)>0$, if and only if $t\in (R_*,R^*)$.
\end{lemma}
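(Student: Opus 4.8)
The plan is to reduce the statement to the abstract one-variable result in Lemma \ref{a7}, whose hypotheses are tailored precisely to produce the asserted geometry. First I would match $g$ with the model function $f$ from Lemma \ref{a7} by reading off the coefficients: take $\tilde a = \frac a2$, $\tilde b = \frac b8$, $\tilde c = \frac{1}{\2(2\shl)^{\2}}$, $\tilde d = \alpha B_{N,p,q,s,\mu}(d_1^2+d_2^2)^{\frac{p+q-(\dpq)}{2}} = \alpha B_1$, together with the exponents $\tilde p = 2\2$ and $\tilde q = \dpq$. With this dictionary I would verify the two structural hypotheses of Lemma \ref{a7}: since $\2 = 3$ we have $\tilde p = 2\2 = 6 > 4$; and evaluating $\dpq = \frac{N(p+q-4)+2\mu}{2s}$ at the two endpoints of the admissible range shows $\dpq = 0$ when $p+q = 2\tw$ and $\dpq = 2$ when $p+q = 4 + \frac{4s-2\mu}{N}$, so the strict inequalities $2\tw < p+q < 4 + \frac{4s-2\mu}{N}$ give exactly $\tilde q = \dpq \in (0,2)$.

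The key algebraic step is to show that the quantitative smallness condition \eqref{A101} of Lemma \ref{a7} is, under the above substitutions, equivalent to $\alpha < \alpha_*$. Since $\tilde d = \alpha B_1$ enters \eqref{A101} only through the factor $\frac{1}{\tilde d}$, dividing through isolates $\alpha$ linearly, and \eqref{A101} rearranges to $\alpha < \alpha_*$ once one identifies $\omega_1 = \frac{8(4-\tilde q)}{\tilde p(\tilde p - 2)(\tilde p - \tilde q)}$ (a direct simplification using $\tilde p = 2\2$) and recognizes the bracketed coefficient expression as exactly the one appearing in the definition of $\alpha_*$. This is the main obstacle: it is a purely computational but delicate bookkeeping exercise, and one must track the exponents $\frac{4-(\dpq)}{2\2-4}$, $\frac{2\2-(\dpq)}{2\2-4}$ and $\frac{2-(\dpq)}{2\2-4}$ carefully so that the powers of $\tilde b/\tilde c$ match the stated form of $\alpha_*$. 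Once this is done, Lemma \ref{a7} applies directly and yields that $g$ attains a strict local minimum at a negative level and a strict global maximum at a positive level on $[0,\infty)$.

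It remains to extract the two zeros $R_* < R^*$ and the sign characterization. For this I would analyze $g'$ directly. Writing $g'(t) = t^{\dpq - 1}\big(\psi(t) - \alpha B_1 \dpq\big)$ with $\psi(t) := a t^{2-\dpq} + \frac b2 t^{4-\dpq} - \frac{2}{(2\shl)^{\2}}t^{2\2 - \dpq}$, one checks that $\psi(0^+) = 0$, $\psi(t) \to -\infty$ as $t \to +\infty$, and that $\psi$ is unimodal (its derivative, after factoring out $t^{1-\dpq}$, reduces to a downward parabola in $t^2$ with a single positive root, using $2\2 - 2 = 4$). Hence the equation $\psi(t) = \alpha B_1 \dpq$ has exactly two positive roots $t_1 < t_2$ in the regime corresponding to $\alpha < \alpha_*$, so $g$ is strictly decreasing on $(0,t_1)$, strictly increasing on $(t_1,t_2)$, and strictly decreasing on $(t_2,\infty)$. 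Combining this monotonicity with $g(0^+) = 0^-$, $g(t_1) < 0 < g(t_2)$ and $g(+\infty) = -\infty$, the intermediate value theorem produces exactly one zero $R_* \in (t_1,t_2)$ and exactly one zero $R^* \in (t_2,\infty)$; strict monotonicity on each interval then forces $g < 0$ on $(0,R_*)$, $g > 0$ on $(R_*, R^*)$, and $g < 0$ on $(R^*, \infty)$, which is precisely the claimed characterization.
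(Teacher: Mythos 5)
Your proposal is correct and follows essentially the same route as the paper: reduce $g$ to the model function of Lemma \ref{a7} by matching coefficients and check that condition \eqref{A101} becomes $\alpha<\alpha_*$. You additionally supply two things the paper leaves implicit — the explicit bookkeeping showing \eqref{A101} is equivalent to $\alpha<\alpha_*$, and the monotonicity/sign analysis producing the zeros $R_*<R^*$ — and your choice $\tilde b=\tfrac b8$ correctly matches the $\tfrac b8 t^4$ term of $g$ (the paper writes $\tilde b=\tfrac b4$, apparently a slip).
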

\begin{proof}
	Take $\tilde{a}=\frac{a}{2},~ \tilde{b}=\frac{b}{4}, ~\tilde{c}=\frac{1}{\2(2\shl)^{\2}}, ~\tilde{d}=\alpha B_{N,p,q,s,\mu}(d_1^2+d_2^2)^{\frac{p+q-(\dpq)}{2}},~ \tilde{q}=\dpq$ and $ \tilde{p}=2\2$ in Lemma \ref{a7}, thus the conclusion follows provided $0<\alpha <\alpha_*$.
\end{proof}	
	\begin{lemma}\label{a9}
		Let $0<\alpha<min\{\alpha^*,\alpha_*\}$. For every $u\in S\D$, the function $\f$ has exactly two critcal points $k_1<k_2 \in \R $ and two zeros $\ell_1<\ell_2 \in  \R$ with $k_1<\ell_1<k_2<\ell_2$. Moreover:
		\begin{itemize}
			\item[(i)] $k_1\star (u,v)\in \mathcal{P}^{+}\D $ and $k_2 \star (u,v)\in \mathcal{P} ^-\D$ and if $k\star(u,v) \in \mathcal{P}\D $ then either $k=k_1$ or $k=k_2$;
			\item[(ii)] $\left([k\star u]_s^2+[k\star v]_s^2\right)^{\frac{1}{2}} \le R_{*}$ for every $k\le l_1$ and
			$$\J(k_1\star(u,v))= \min\{\J(k\star (u,v)):k\in \R ~\text{and}~\left([k\star u]_s^2+[k\star v]_s^2\right)^{\frac{1}{2}} \le R_{*}\}<0,$$
			\item[(iii)] We have
			$$\J(k_2\star(u,v)) =\min\{\J(k\star(u,v)):k\in \R \}>0,$$
			and $\F$ is strictly decreasing on $(k_2,\infty)$;
			\item[(iv)] The map $(u,v)\in S\D \mapsto k_1 \in \R$ and $(u,v)\in S\D \mapsto k_2 \in \R$ are of class $\mathcal{C}^1$.
		\end{itemize}
	\end{lemma}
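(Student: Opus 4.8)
The plan is to reduce the study of $\f$ to the scalar function analyzed in Lemma~\ref{a7}. Setting $t=e^{sk}$, which is an increasing $C^{\infty}$ bijection from $\R$ onto $(0,\infty)$, the fibre map becomes
\begin{equation*}
\tilde{h}(t):=\f\big(\tfrac{1}{s}\log t\big)=\tilde{a}\,t^{2}+\tilde{b}\,t^{4}-\tilde{c}\,t^{2\2}-\tilde{d}\,t^{\dpq},\qquad t>0,
\end{equation*}
with $\tilde{a}=\tfrac{a}{2}P$, $\tilde{b}=\tfrac{b}{4}Q$, $\tilde{c}=\tfrac{1}{\2}C$, $\tilde{d}=\alpha D$, where $P:=[u]_s^2+[v]_s^2$, $Q:=[u]_s^4+[v]_s^4$, $C:=\cq$ and $D:=\cpq$. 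Since $\2=3$ we have $2\2=6>4$ and $\dpq\in(0,2)$, so $\tilde{h}$ is exactly of the form treated in Lemma~\ref{a7} with $\tilde{p}=2\2$ and $\tilde{q}=\dpq$. Because the reparametrisation preserves critical points, their type, and the corresponding values (indeed $\f(k)=\tilde h(e^{sk})$), it suffices to check the structural hypothesis~\eqref{A101} for $\tilde h$.

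To verify~\eqref{A101} I would use the three a priori estimates uniformly in $(u,v)$. From $Q\ge\tfrac12P^2$ one gets $\tilde{b}\ge\tfrac{b}{8}P^2$; \eqref{Ai2} gives $\tilde{c}\le P^{\2}/(\2(2\shl)^{\2})$; and \eqref{Ai4} gives $\tilde{d}\le\alpha B_1P^{\dpq/2}$ with $B_1=B_{N,p,q,s,\mu}(d_1^2+d_2^2)^{(p+q-(\dpq))/2}$. Substituting these into the second bracket of~\eqref{A101} and using $\tilde{p}-4=2(\2-2)$, every power of $P$ cancels, so that second bracket is bounded below by $K/\alpha$ for an explicit $\alpha$-free constant $K$; since $\omega_1\in(0,1)$ here, the first bracket $\omega_1^{(4-\dpq)/(2\2-4)}-\omega_1^{(2\2-\dpq)/(2\2-4)}$ is strictly positive, and the product is bounded below by $\alpha_*/\alpha$ with $\alpha_*$ precisely the constant defined in the paper. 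Hence $\alpha<\alpha_*$ guarantees~\eqref{A101}, and Lemma~\ref{a7} yields that $\f$ has exactly two critical points $k_1<k_2$, a local strict minimum at a negative level and a global strict maximum at a positive level. As $\f(k)\to0^-$ when $k\to-\infty$ and $\f(k)\to-\infty$ when $k\to+\infty$, the map $\f$ has precisely two zeros $\ell_1<\ell_2$ interlacing as $k_1<\ell_1<k_2<\ell_2$.

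For (i), by Corollary~\ref{a0} one has $k\star(u,v)\in\mathcal{P}\D$ iff $k$ is a critical point of $\f$, so the only such $k$ are $k_1,k_2$; writing the fibre map of $k\star(u,v)$ as $\tau\mapsto\f(k+\tau)$ identifies its second-derivative test at $\tau=0$ with the sign of $\f''(k)$, and since $\mathcal{P}^0\D=\emptyset$ by Lemma~\ref{a5}, the local minimum forces $k_1\star(u,v)\in\mathcal{P}^+\D$ and the global maximum forces $k_2\star(u,v)\in\mathcal{P}^-\D$. For (iii), $k_2$ realises $\max_{k\in\R}\f(k)>0$, and the absence of critical points on $(k_2,\infty)$ gives strict decrease there. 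For (iv), I would apply the implicit function theorem to $\partial_k\f(k;u,v)=0$: the coefficients $P,Q,C,D$ depend in a $C^1$ manner on $(u,v)\in\h$, and the $k$-derivative at $k_i$ equals $\f''(k_i)\ne0$ (again by $\mathcal{P}^0\D=\emptyset$), so $k_1(\cdot),k_2(\cdot)$ are $C^1$ selections on $S\D$.

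Part (ii) is where the comparison with $g$ enters, and I expect it to be the most delicate point together with the threshold computation above. By~\eqref{A20}, $\f(k)=\J(k\star(u,v))\ge g(\rho(k))$ where $\rho(k):=e^{sk}\sqrt{P}$ is increasing with $\rho(\R)=(0,\infty)$. For $k\le\ell_1$ one has $\f(k)\le0$, hence $g(\rho(k))\le0$, so by Lemma~\ref{a30} $\rho(k)\in(0,R_*]\cup[R^*,\infty)$; but $\rho\big((-\infty,\ell_1]\big)=(0,\rho(\ell_1)]$ is a connected set accumulating at $0$, so it cannot reach $[R^*,\infty)$ without crossing $(R_*,R^*)$ where $g>0$, forcing $\rho(k)\le R_*$ for all $k\le\ell_1$. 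Writing the constraint set as $\{k:\rho(k)\le R_*\}=(-\infty,\bar{k}]$ with $\rho(\bar{k})=R_*$, we get $\f(\bar{k})\ge g(R_*)=0>\f(k_1)$; since on $(-\infty,\bar{k}]$ the map $\f$ decreases to $\f(k_1)$, then increases past $k_1$, and on any decreasing stretch beyond $k_2$ stays $\ge\f(\bar{k})\ge0$, the constrained minimum is attained exactly at $k_1$, giving $\J(k_1\star(u,v))<0$. The main obstacle throughout is obtaining the estimate~\eqref{A101} uniformly in $(u,v)$ — i.e. the cancellation of $P$ that collapses it to the single threshold $\alpha_*$ — and then threading the $g$-comparison so that the constrained minimiser is pinned precisely to $k_1$.
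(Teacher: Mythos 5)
Your proposal is correct and follows essentially the same route as the paper: identify the fibre map with the scalar function of Lemma~\ref{a7} (verifying \eqref{A101} via the bounds \eqref{Ai2} and \eqref{Ai4}), use the comparison with $g$ from \eqref{A20} together with Lemma~\ref{a30} to locate the critical points and zeros, and deduce (i)--(iv) from Corollary~\ref{a0}, the fact that $\mathcal{P}^{0}\D=\emptyset$, and the implicit function theorem. If anything, your explicit check of \eqref{A101} --- showing the powers of $[u]_s^2+[v]_s^2$ cancel so the condition collapses to the single threshold $\alpha<\alpha_*$ --- supplies a computation the paper merely asserts.
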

\begin{proof}
For $(u,v)\in S\D$, we have 
\begin{equation*}
\begin{aligned}
     \J(k \star (u,v))=\frac{a}{2}e^{2sk}([u]_s^2 +[v]_s^2) +\frac{b}{4}e^{4sk}([u]_s^4 + [v]_s^4)
-\frac{1}{\2}e^{2s\2 k}\cq \\-\alpha e^{s(\delta_p + \delta_q)k}\cpq.
\end{aligned}
\end{equation*}
If $\tilde{a}=\frac{a}{2}([u]_s^2 +[v]_s^2),\tilde{b}=\frac{b}{4}([u]_s^4 + [v]_s^4), \ \tilde{c}=\frac{1}{\2}\cq, \ \tilde{d}=\alpha \cpq, \ \tilde{p}=2\2$ and $\tilde{q}=\dpq$, then it satisfies condition \eqref{A101}.
	From \eqref{A20}, we obtain
	\begin{align*}
    \J(k\star(u,v)) &\ge g\left(([k\star u]_s^2+[k\star v]_s^2 )^{\frac{1}{2}}\right)\\
    &=g(e^{ks}([u]_s^2+[v]_s^2)^{\frac{1}{2}})
	\end{align*}
	then
	\begin{equation*}
	\f(k)>0,~~\forall ~k\in \left(\log\frac{R_*}{([u]_s^2+[v]_s^2)^{\frac{1}{2}}},\log\frac{R^*}{([u]_s^2+[v]_s^2)^{\frac{1}{2}}}  \right).
	\end{equation*}
    	As $\f(-\infty)=0^-$ and $\f(+\infty)=-\infty$, we see that $\f$ has exactly two critical points $k_1<k_2$ with $k_1$ as local minimum point on $\left(-\infty,\log\frac{R_*}{([u]_s^2+[v]_s^2)^{\frac{1}{2}}}  \right)$ at negative level and $k_2$ as global maximum point at a positive level. Using Corollary \ref{a0}, we have $(k_1\star(u,v)),(k_2\star(u,v))\in \mathcal{P}\D,$ and $(k\star(u,v))\in \mathcal{P}\D$ which implies $k\in \{k_1,k_2\}$. By minimality $(\f)''(0)=(\f)''(k_1)\ge 0$, and ``$=$" does not hold, as $\mathcal{P}^0\D =\emptyset$; so $(k_1\star(u,v)) \in \mathcal{P}^{+}\D .$ Similarly, we have $(k_2 \star(u,v))\in \mathcal{P}^{-}\D $. Because of the behavior at infinity and monotonicity, $\f$ has exactly two zeros $\ell_1<\ell_2$ with $k_1<\ell_1<k_2<\ell_2$.\par 
	Consider the $C^1$ function $\Psi(s,u,v):=(\f)'(k)$. By the fact that $\Psi(k_1,u,v)=0,~\partial_k\Psi(k_1,u,v)>0$, and it is impossible to pass with continuity from $\mathcal{P}^{+}\D $ to $\mathcal{P}^{-}\D $ (as $\mathcal{P}^{0}\D =\emptyset$), then by applying implicit function theorem on $\Psi(k,u,v)$, we get the desired result. 
\end{proof}	
	For $z>0$, define 
	$$\mathcal{B}_z:= \{(u,v)\in S\D :([u]_s^2+[v]_s^2)^{\frac{1}{2}} < z  \},$$ 
	and
	$$m_{\alpha}\D:= \inf_{u\in \mathcal{B}_{R_*}}\J(u,v).$$	
\begin{corollary}\label{a8}
		Let $s\in(0,1),~22_{\mu,*} < p+q <4+\frac{4s-2\mu}{N}$ and $0<\alpha<min\{\alpha^*,\alpha_*\}$. Then $\mathcal{P}^{+}\D  \subset \mathcal{B}_{R^*}$ and $\sup\limits_{\mathcal{P}^{+}\D }\J(u,v) \le 0\le \inf\limits_{\mathcal{P}^{-}\D }\J(u,v).$ 
\end{corollary}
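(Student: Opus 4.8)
The plan is to read both conclusions straight off the fiber-map description of Lemma \ref{a9}; the governing idea is that membership of a point in $\mathcal{P}^{+}\D$ or in $\mathcal{P}^{-}\D$ forces the scaling parameter $0$ to coincide with a definite one of the two fiber critical points $k_1<k_2$ produced there.

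First I would identify, for a given $(u,v)$, which critical point equals $0$. Fix $(u,v)\in\mathcal{P}^{+}\D$. Since $(u,v)\in\mathcal{P}\D$, Corollary \ref{a0} says $0$ is a critical point of $\f$, and by definition of $\mathcal{P}^{+}\D$ we have $(\f)''(0)>0$. By Lemma \ref{a9}, $\f$ has exactly the two critical points $k_1<k_2$, with $k_1\star(u,v)\in\mathcal{P}^{+}\D$ (local minimum) and $k_2\star(u,v)\in\mathcal{P}^{-}\D$ (global maximum). Because $\mathcal{P}^{0}\D=\emptyset$ by Lemma \ref{a5}, the second derivative never vanishes at a fiber critical point, so the sign $(\f)''(0)>0$ forces $0$ to be the minimizing critical point, i.e. $k_1=0$. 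Running the same argument on $\mathcal{P}^{-}\D$ gives $k_2=0$ there.

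Next I would deduce the inclusion and the energy bounds. For $(u,v)\in\mathcal{P}^{+}\D$ we have $k_1=0$, hence $0<\ell_1$ and Lemma \ref{a9}(ii) applies at $k=0$: since $0\star(u,v)=(u,v)$, it gives $\big([u]_s^2+[v]_s^2\big)^{1/2}\le R_*<R^*$, the strict inequality $R_*<R^*$ coming from Lemma \ref{a30}; therefore $(u,v)\in\mathcal{B}_{R^*}$ and $\mathcal{P}^{+}\D\subset\mathcal{B}_{R^*}$. The same part of Lemma \ref{a9} also gives $\J(u,v)=\J(k_1\star(u,v))<0$, so $\sup_{\mathcal{P}^{+}\D}\J\le0$. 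Dually, for $(u,v)\in\mathcal{P}^{-}\D$ we have $k_2=0$, and Lemma \ref{a9}(iii) gives $\J(u,v)=\J(k_2\star(u,v))>0$; taking the infimum yields $\inf_{\mathcal{P}^{-}\D}\J\ge0$, which completes the chain $\sup_{\mathcal{P}^{+}\D}\J\le0\le\inf_{\mathcal{P}^{-}\D}\J$.

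There is essentially no analytic difficulty here---the statement is a bookkeeping corollary of Lemma \ref{a9}. The one step demanding care is the identification: one must use $\mathcal{P}^{0}\D=\emptyset$ to exclude $(\f)''(0)=0$ and thereby match $0$ unambiguously with $k_1$ on $\mathcal{P}^{+}\D$ and with $k_2$ on $\mathcal{P}^{-}\D$, since otherwise the sign of $(\f)''(0)$ alone would not pin down which critical point sits at the origin.
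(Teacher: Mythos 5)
Your proof is correct and follows essentially the same route as the paper, whose own proof is just the two-line observation that Lemma \ref{a9} forces $k_1=0$ on $\mathcal{P}^{+}\D$ (hence $\J\le 0$ and $([u]_s^2+[v]_s^2)^{1/2}<R_*<R^*$) and $k_2=0$ on $\mathcal{P}^{-}\D$ (hence $\J\ge 0$). Your extra step identifying $0$ with $k_1$ or $k_2$ via the sign of $(\f)''(0)$ and the emptiness of $\mathcal{P}^{0}\D$ is exactly the detail the paper leaves implicit.
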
	
\begin{proof}
For all $(u,v)\in \mathcal{P}^{+}\D $, Lemma \ref{a9} implies that $k_1=0,~\J(u,v)\le 0$ and ${([u]_s^2+[v]_s^2)^{\frac{1}{2}}}<R_*$. Similarly, $(u,v)\in \mathcal{P}^{-}\D$ implies $k_2=0$ and $\J(u,v)\ge 0$.

\end{proof}

	\begin{corollary}\label{a10}
		Let $s\in(0,1), ~22_{\mu,*} < p+q <4+\frac{4s-2\mu}{N}$ and $0<\alpha<min\{\alpha^*,\alpha_*\}$. Then, $m_{\alpha}\D \in (-\infty,0)$ and 
		$$m_{\alpha}\D=\displaystyle\inf_{\mathcal{P}\D }\J=\inf_{\mathcal{P}^{+}\D }\J .$$
		Also, there exists a constant $\rho >0$ small enough such that 
		$m_{\alpha}\D<\inf_{\overline{\mathcal{B}}_{R_*}\backslash \mathcal{B}_{R_*-\rho}}\J.$ Here, $\overline{\mathcal{B}}_{R_*}$ denotes closure of $\mathcal{B}_{R_*}$.
	\end{corollary}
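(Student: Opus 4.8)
The plan is to read everything off from the fibering picture already set up: the pointwise bound $\J(u,v)\ge g\big(([u]_s^2+[v]_s^2)^{1/2}\big)$ of \eqref{A20}, the profile of $g$ recorded in Lemma~\ref{a30} (namely $g<0$ on $(0,R_*)$, $g(R_*)=0$, $g>0$ on $(R_*,R^*)$, with a negative local minimum and a positive global maximum), the fiber-map analysis of Lemma~\ref{a9}, and the sign dichotomy of Corollary~\ref{a8}.

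First I would settle $m_{\alpha}\D\in(-\infty,0)$. For the lower bound, any $(u,v)\in\mathcal{B}_{R_*}$ has $([u]_s^2+[v]_s^2)^{1/2}<R_*$, so by \eqref{A20} and continuity of $g$ on the compact interval $[0,R_*]$ we get $\J(u,v)\ge\min_{[0,R_*]}g>-\infty$; hence $m_{\alpha}\D>-\infty$. For the strict upper bound I would take an arbitrary $(u,v)\in S\D$ and pass to the first fiber critical point $k_1\star(u,v)$ supplied by Lemma~\ref{a9}: since $k_1<\ell_1$ and $k\mapsto e^{sk}([u]_s^2+[v]_s^2)^{1/2}$ is strictly increasing, Lemma~\ref{a9}(ii) forces $([k_1\star u]_s^2+[k_1\star v]_s^2)^{1/2}<R_*$, so $k_1\star(u,v)\in\mathcal{B}_{R_*}$ while $\J(k_1\star(u,v))<0$; this gives $m_{\alpha}\D<0$.

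Next I would prove $m_{\alpha}\D=\inf_{\mathcal{P}^{+}\D}\J$. The key inclusion is $\mathcal{P}^{+}\D\subseteq\mathcal{B}_{R_*}$: for $(u,v)\in\mathcal{P}^{+}\D$ the value $k=0$ is a critical point of $\f$, and since $\mathcal{P}^{0}\D=\emptyset$ (Lemma~\ref{a5}) it must coincide with the minimum-type point $k_1$, whence $k_1=0<\ell_1$ and the monotonicity above pins $([u]_s^2+[v]_s^2)^{1/2}<R_*$. This yields $\inf_{\mathcal{P}^{+}\D}\J\ge m_{\alpha}\D$. For the reverse, given $(u,v)\in\mathcal{B}_{R_*}$ the admissible set $\{k:e^{sk}([u]_s^2+[v]_s^2)^{1/2}\le R_*\}$ contains $k=0$, so the minimization property of Lemma~\ref{a9}(ii) gives $\J(k_1\star(u,v))\le\J(u,v)$ with $k_1\star(u,v)\in\mathcal{P}^{+}\D$; taking the infimum over $\mathcal{B}_{R_*}$ yields $\inf_{\mathcal{P}^{+}\D}\J\le m_{\alpha}\D$. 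To reach the full manifold I would use $\mathcal{P}^{0}\D=\emptyset$ again to write $\mathcal{P}\D=\mathcal{P}^{+}\D\cup\mathcal{P}^{-}\D$, and invoke Corollary~\ref{a8} to get $\inf_{\mathcal{P}^{-}\D}\J\ge0>m_{\alpha}\D=\inf_{\mathcal{P}^{+}\D}\J$; hence the infimum over $\mathcal{P}\D$ is realized on $\mathcal{P}^{+}\D$ and $\inf_{\mathcal{P}\D}\J=\inf_{\mathcal{P}^{+}\D}\J=m_{\alpha}\D$.

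Finally, for the gap estimate near the sphere $([u]_s^2+[v]_s^2)^{1/2}=R_*$, I would restrict \eqref{A20} to the shell $\overline{\mathcal{B}}_{R_*}\setminus\mathcal{B}_{R_*-\rho}$, where $([u]_s^2+[v]_s^2)^{1/2}\in[R_*-\rho,R_*]$, to obtain $\inf_{\overline{\mathcal{B}}_{R_*}\setminus\mathcal{B}_{R_*-\rho}}\J\ge\min_{[R_*-\rho,R_*]}g$. Since $g$ is continuous and $g(R_*)=0$, the right-hand side tends to $0$ as $\rho\to0^{+}$, while $m_{\alpha}\D<0$ is fixed; choosing $\rho$ small enough that $\min_{[R_*-\rho,R_*]}g\ge m_{\alpha}\D/2$ delivers $m_{\alpha}\D<\inf_{\overline{\mathcal{B}}_{R_*}\setminus\mathcal{B}_{R_*-\rho}}\J$. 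The main obstacle is the bookkeeping in the middle step, where one must pin the quantity $([u]_s^2+[v]_s^2)^{1/2}$ of every point of $\mathcal{P}^{+}\D$ \emph{strictly} below $R_*$ (sharper than the inclusion in $\mathcal{B}_{R^*}$ from Corollary~\ref{a8}) and match the two infima through the $L^2$-preserving scaling; once this is in place, the remaining assertions are immediate from the continuity of $g$ and the already-established sign information.
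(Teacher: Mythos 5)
Your proposal is correct and follows essentially the same route as the paper: the lower bound via $\J\ge g$ on $[0,R_*]$, the strict negativity via the fiber map, the identification $m_{\alpha}\D=\inf_{\mathcal{P}^{+}\D}\J=\inf_{\mathcal{P}\D}\J$ through $\mathcal{P}^{0}\D=\emptyset$ and the sign dichotomy of Corollary~\ref{a8}, and the shell estimate by continuity of $g$ with $g(R_*)=0$. Your observation that one needs the sharper inclusion $\mathcal{P}^{+}\D\subset\mathcal{B}_{R_*}$ (which the proof of Corollary~\ref{a8} actually establishes, despite its statement reading $\mathcal{B}_{R^*}$) is exactly what the paper's argument uses.
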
	
	\begin{proof}
		For $(u,v) \in \mathcal{B}_{R_*},$ we have $\J(u,v) \ge g\left(([u]_s^2+[v]_s^2)^{\frac{1}{2}}\right) \ge \min\limits_{t\in [0,R_*]}g(t)>-\infty$, and then $m_{\alpha}\D >-\infty$. Furthermore, for any $(u,v) \in S\D$, we have $([u]_s^2+[v]_s^2)^{\frac{1}{2}}<R_*$ and $\J(t\star(u,v))<0$ for $t<<-1$ and hence $m_{\alpha}\D <0$.\par 
		By Corollary \ref{a8}, we deduce $m_{\alpha}\D \le \inf\limits_{\mathcal{P}^{+}\D } \J$ as $\mathcal{P}^{+}\D \subset \mathcal{B}_{R_*}$. On the other hand, if we have $k_2\star (u,v) \in \mathcal{P}^{+}\D  \subset \mathcal{B}_{R_*}$, and  
		\begin{align*}
		\J(k_2\star(u,v)) =\min\{\J(k\star(u,v)):k\in \R ~and~ ([k\star u]_s^2+[k\star v]_s^2)^{\frac{1}{2}} <R_*  \} \le \J(u,v),
		\end{align*}
which implies that $\inf\limits_{\mathcal{P}^{0}\D }\J \le m_{\alpha}\D.$ Since $\J(u,v) \ge 0$ on $\mathcal{P}^{-}\D $ and we get $\inf\limits_{\mathcal{P}^{}\D } \le m_{\alpha}\D.$ Since $\J \ge 0$ on $\mathcal{P}^{-}\D $, we get $\inf\limits_{\mathcal{P}^{+}\D }\J=\inf\limits_{\mathcal{P}\D}\J$. Then, by continuity of $g$ there exists $\rho>0$ (independent of $d_1,d_2$ and $\alpha$), such that $g(t) \ge \frac{m_{\alpha}\D}{2}$, if $t \in [R_*-\rho , R_*]$. Hence 
		 \begin{align*}
		 	\J(u,v)\ge g(([u]_s^2+[v]_s^2)^{\frac{1}{2}})\ge \frac{m_{\alpha}\D}{2}>m_{\alpha}\D, 
		 \end{align*}
		 for every $(u,v) \in \overline{\mathcal{B}}_{R_*}\backslash \mathcal{B}_{R_*-\rho}.$			 
	\end{proof}

\textbf{Proof of Theorem \ref{AAA}:}
\textit{(i)} Let $\{(u_n,v_n)\}$ be minimizing sequence for $m_{\alpha}\D=\inf\limits_{u\in \mathcal{B}_{R_*}}\J$. Using \cite{lieb2001analysis,park2011fractional,servadei2013variational}, we infer the inequality $\J(|u_n|^*,|v_n|^*)\le \J(u_n,v_n)$, based on the properties:
	\begin{equation*}
	[|u_n|^*]_s \le [|u_n|]_s,~ \int_{\R^N}(I_{\mu}*(|u_n|^*)^p)(|v_n|^*)^q \ge \cnq,
	\end{equation*}
where $|u_n|^*$ and $|v_n|^*$ denotes the symmetric decreasing rearrangement of $|u_n|$ and $|v_n|$ respectively. This allows us to assume without loss of generality that $(u_n,v_n) \in S\D $ is nonnegative and radially decreasing for every $n$. 

By Lemma \ref{a9} and Corollary \ref{a8}, we obtain  $(k_n \star(u_n,v_n))\in \mathcal{P}^{+}\D ,$ and satisfies $([u_n]_s^2+[v_n]_s^2)^{\frac{1}{2}} <R_*$ along with energy minimization: 	
	\begin{align*}
	\J(k_n\star(u_n,v_n))&=\min\{\J(k\star(u_n,v_n)):k\in \R ~and~ ([k_n\star u]_s^2+[k_n\star v]_s^2)^{\frac{1}{2}} <R_* \}\\
	&\le \J(u_n,v_n).
	\end{align*}
This gives rise to a new minimizing sequence, $\{(w_{u_n},w_{v_n})\} \in \mathcal{P}^{+}\D  \cap S_r\D$ and $P_{\alpha}(w_{u_n},w_{v_n})=0$, where $(w_{u_n},w_{v_n})=(k_n\star u_n,k_n\star v_n)$. By Corollary \ref{a10}, we have for all $n$ that $([w_{u_n}]_s^2+[w_{v_n}]_s^2)^{\frac{1}{2}}<R_*-\rho$. Since $(w_{u_n},w_{v_n})\in\mathcal{P}\D$ and $\J(|w_{u_n}|,|w_{v_n}|)=\J(w_{u_n},w_{v_n})$, we can assume that $w_{u_n},w_{v_n}$ are nonnegative. Applying Ekelands's variational principle, there is another minimizing sequence $\{(\tilde{u}_n,\tilde{v}_n)\}$, for which
$m_{\alpha}\D <0$ with $\|(\tilde{u}_n,\tilde{v}_n)-(w_{u_n},w_{v_n})\|_{\h} \to 0$ as $n\to \infty$. This sequence also serves as a Palais-Smale sequence for $\J$ on $S\D$. As a consequence, we obtain
$$\Big([\tilde{u}_n]_s^2+[\tilde{v}_n]_s^2\Big)^{\frac{1}{2}}<R_*-\rho,\quad P_{\alpha}(\tilde{u}_n,\tilde{v}_n)\to 0~\text{as}~n\to\infty.$$	
Subsequently, $\{(\tilde{u}_n,\tilde{v}_n)\}$ satisfies all the conditions of Proposition \ref{a3}. 

To proceed, we assume that alternative \textit{(ii)} from Proposition \ref{a3} does not hold. Then, up to a subsequence, we would have $(\tilde{u}_n,\tilde{v}_n)\rightharpoonup(\tilde{u},\tilde{v})$ weakly in $\h$, but not strongly, with $\tilde{u}\not\equiv0$ and $\tilde{v}\not\equiv0$. In this scenario, $(\tilde{u},\tilde{v})$ represents a solution to \eqref{A_1} for some $\tilde{\lambda}_1,\tilde{\lambda}_2<0$ and 
\begin{equation}\label{A_2}
    c\ge E_{\alpha}(\tilde{u},\tilde{v})+\frac{ab\shl^3}{2}+\frac{b^3\shl^6}{12}+\frac{2}{3}\left(\frac{b^2\shl^4}{4}+a\shl \right)^{\frac{3}{2}} .
\end{equation}
Moreover, the pair satisfies the Pohozaev identity
\begin{align*}
\tilde{P}_{\alpha}(\tilde{u},\tilde{v}):= (a+D_1b)[\tilde{u}]_s^2+(a+D_2b)[\tilde{v}]_s^2-2\int_{\R^N}(I_{\mu}*|\tilde{u}|^{\2})|\tilde{v}|^{\2}-\alpha(\dpq)\int_{\R^N}(I_{\mu}*|\tilde{u}|^{p})|\tilde{v}|^{q}=0.
\end{align*} 
Now, using the fact that $(\tilde{u},\tilde{v})$ solves \eqref{A_1} and satisfies $\|\tilde{u}\|_2^2\le d_1^2$, $\|\tilde{v}\|_2^2\le d_2^2$, we have
\begin{align*}
    E_{\alpha}(\tilde{u},\tilde{v})&=\frac{a}{3}\left([\tilde{u}]_s^2+[\tilde{v}]_s^2\right)+\frac{b}{12}\left(D_1[\tilde{u}]_s^2+D_2[\tilde{v}]_s^2\right) -\frac{\alpha}{6}\left(6-(\dpq)\right)\int_{\R^N}(I_{\mu}*|\tilde{u}|^p)|\tilde{v}|^q\\ 
    &\ge\frac{b}{24}\left([\tilde{u}]_s^2)+[\tilde{v}]_s^2\right)^2-\frac{\alpha}{6}\left(6-(\dpq)\right)B_{N,p,q,s,\mu}(d_1^2+d_2^2)^{\frac{p+q-(\dpq)}{2}}\left([\tilde{u}]_s^2+[\tilde{v}]_s^2\right)^{\frac{\dpq}{2}}\\
&=f\left(([\tilde{u}]_s^2+[\tilde{v}]_s^2)^{\frac{1}{2}}\right)
\end{align*}
where we define $f(t):=\frac{b}{24}t^4-\frac{\alpha}{6}\left(6-(\dpq)\right)B_{N,p,q,s,\mu}(d_1^2+d_2^2)^{\frac{p+q-(\dpq)}{2}}t^{\dpq}$, and minimum of $f$ occurs at $$\tilde{t}:=\left(\frac{\alpha(\dpq)(6-(\dpq))B_{N,p,q,s,\mu}(d_1^2+d_2^2)^{\frac{p+q-(\dpq)}{2}}}{b}\right)^{\frac{1}{4-(\dpq)}}.$$ Thus,
\begin{align}\label{A_3}
   \min_{t\ge0} f(t)=f(\tilde{t})&=-\frac{b}{6}\left(\frac{1}{\dpq}-\frac{1}{4}\right)\tilde{t}^4>-\left(\frac{ab\shl^3}{2}+\frac{b^3\shl^6}{12}+\frac{2}{3}\left(\frac{b^2\shl^4}{4}+a\shl \right)^{\frac{3}{2}}\right)
\end{align} 
as, $\alpha<\alpha_*^*.$ Since $E_{\alpha}(\tilde{u},\tilde{v})\ge f\left(([\tilde{u}]_s^2+[\tilde{v}]_s^2)^{\frac{1}{2}}\right)$, therefore by \eqref{A_2} and \eqref{A_3}, we obtain
$$0>c\le E_{\alpha}(\tilde{u},\tilde{v})+\frac{ab\shl^3}{2}+\frac{b^3\shl^6}{12}+\frac{2}{3}\left(\frac{b^2\shl^4}{4}+a\shl \right)^{\frac{3}{2}}>0,$$
which gives a contradiction, implying our assumption was false. Hence, $(\tilde{u}_n,\tilde{v}_n)\to (\tilde{u},\tilde{v})$ strongly in $\h$. It can be verified that $(\tilde{u},\tilde{v})$ is nonnegative and radially decreasing; by the maximum principle \cite{cabre2014nonlinear}, we have $\tilde{u},\tilde{v}>0$. \par 
The pair $(\tilde{u},\tilde{v})$ is the minimizer, for $m_{\alpha}\D$ on the set $\mathcal{B}_{R_*}$ and since the critical set of $\J|_{S\D}$ lies within $\mathcal{P}\D$, Corollary \ref{a10} ensures $$\J(\tilde{u},\tilde{v})=m_{\alpha}\D=\displaystyle\inf_{\mathcal{P}\D}\J,$$ so $(\tilde{u},\tilde{v})$ is indeed a ground state solution. \par 
To show that any ground state of $\J|_{S\D}$ is a local minimizer of $\J$ in $\mathcal{B}_{R_*}$, take any critical point $(u,v)$ of $\J|_{S\D}$ with $\J(u,v)=m_{\alpha}\D=\displaystyle\inf_{\mathcal{P}\D}\J$. Since $\J(u,v)<0<\displaystyle\inf_{\mathcal{P}^-\D}\J$, we conclude $(u,v)\in \mathcal{P^+}\D$. Again by Corollary \ref{a10}, $\mathcal{P}^+\D\subset\mathcal{B}_{R_*}$, which implies $\big([u]_s^2+[v]_s^2)^{\frac{1}{2}}<R_*$, and therefore we conclude that $(u,v)$ is a local minimizer for $\J|_{\mathcal{B}_{R_*}}$.\par 
Finally, by Lemma \ref{a30}, we have $R_{*}\to 0$ as $\alpha \to 0^+$, and thus $\big([\tilde{u}]_s^2+[\tilde{v}]_s^2)^{\frac{1}{2}}<R_*(d_1,d_2,\alpha)\to0. $ Additionally,
\begin{align*}
    0>m_{\alpha}\D=\J(\tilde{u},\tilde{v})\ge  \frac{a}{2}T+ \frac{b}{8}T^2-\frac{1}{\2(2\shl)^{\2 }} T^{\2}-\alpha B_{N,p,q,s,\mu}(d_1^2+d_2^2)^{\frac{p+q-(\dpq)}{2}}T^{\frac{\dpq}{2}}\to 0,
\end{align*}
where $T=[\tilde{u}]_s^2+[\tilde{v}]^2_s$, indicating $m_{\alpha}\D\to 0$ as $\alpha\to 0^+.$

\section{Proof of Theorem \ref{AA2}}\label{S5}
In this section, we derive the M-P type solution of \eqref{G} for the range $4+\frac{8s-2\mu}{N}<p+q <2\2$. To this end, we utilize the concept of monotonicity, which is precisely established within this section.
\begin{lemma}\label{a11}
	\cite[Lemma 5.1]{li2022normalized}	Let $\tilde{a},\tilde{b},\tilde{c},\tilde{d},\tilde{p},\tilde{q}>0$ and $f(t):=\tilde{a}t^2+\tilde{b}t^4-\tilde{c}t^{\tilde{p}}-\tilde{d}t^{\tilde{q}}$ for $t\ge0$. If $\tilde{p},\tilde{q}\in (4,+\infty)$, $f(t)$ attains a unique maximum point at a positive level on $[0,+\infty)$.
	\end{lemma}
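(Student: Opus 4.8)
The plan is to reduce everything to a sign analysis of $f'$ on $(0,\infty)$, obtained by differentiating twice more and peeling off a factor of $t$ at each stage. First I would record the boundary behaviour: $f(0)=0$, while for $t$ near $0^+$ the term $\tilde a t^2$ dominates (because $\tilde p,\tilde q>4>2$), so $f(t)>0$ for small $t>0$; and since $\tilde p,\tilde q>2$, the leading negative powers force $f(t)\to-\infty$ as $t\to\infty$. Consequently $f$ is positive somewhere, negative for large $t$, and continuous, hence attains its supremum at some interior point $t^*>0$ with $f(t^*)>0$. This already yields a maximum at a positive level; the whole content of the lemma is the \emph{uniqueness} of the maximiser.

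For uniqueness I would show $f$ is strictly increasing up to $t^*$ and strictly decreasing afterwards. Writing, for $t>0$,
$$f'(t)=t\,h(t),\qquad h(t):=2\tilde a+4\tilde b t^2-\tilde p\tilde c\,t^{\tilde p-2}-\tilde q\tilde d\,t^{\tilde q-2},$$
the positive critical points of $f$ are exactly the zeros of $h$. I would analyse $h$ by the same device: for $t>0$,
$$h'(t)=t\,k(t),\qquad k(t):=8\tilde b-\tilde p(\tilde p-2)\tilde c\,t^{\tilde p-4}-\tilde q(\tilde q-2)\tilde d\,t^{\tilde q-4}.$$
Here is where the hypothesis $\tilde p,\tilde q>4$ enters decisively: the exponents $\tilde p-4$ and $\tilde q-4$ are strictly positive, so $k$ is strictly decreasing on $(0,\infty)$ with $k(0^+)=8\tilde b>0$ and $k(+\infty)=-\infty$. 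Hence $k$ has a unique zero $t_0$, giving $h'>0$ on $(0,t_0)$ and $h'<0$ on $(t_0,\infty)$; that is, $h$ is strictly increasing then strictly decreasing, with a single interior maximum at $t_0$.

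Finally I would combine these facts. Since $h(0^+)=2\tilde a>0$ and $h$ increases on $(0,t_0)$, we have $h>0$ throughout $(0,t_0]$; then $h$ decreases strictly from the positive value $h(t_0)$ down to $-\infty$, so it vanishes at exactly one point $t^*>t_0$, with $h>0$ on $(0,t^*)$ and $h<0$ on $(t^*,\infty)$. Recalling $f'(t)=t\,h(t)$, this yields $f'>0$ on $(0,t^*)$ and $f'<0$ on $(t^*,\infty)$, so $t^*$ is the unique critical point of $f$ in $(0,\infty)$ and is a strict global maximiser at a positive level, as claimed. The only delicate point is the strict monotonicity of $k$, which relies crucially on both exponents exceeding $4$; without this (for instance in the mixed regime of Lemma~\ref{a7}, where $\tilde q\in(0,2)$) the shape of $f$ changes and one obtains a local minimum together with a global maximum instead.
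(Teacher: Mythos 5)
Your proof is correct. The paper itself offers no proof of this lemma -- it is simply quoted from \cite{li2022normalized} -- so there is nothing to compare against; your argument (factor $f'(t)=t\,h(t)$, then $h'(t)=t\,k(t)$, observe that $\tilde p,\tilde q>4$ makes $k$ strictly decreasing from $8\tilde b>0$ to $-\infty$, and propagate the resulting sign pattern back up to conclude $f'>0$ on $(0,t^*)$ and $f'<0$ on $(t^*,\infty)$) is the standard and complete way to establish the claim, and your closing remark correctly identifies why the conclusion changes in the mixed regime $\tilde q\in(0,2)$ of the companion Lemma.
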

\begin{lemma}\label{a40}
	Let $s\in (0,1),~a,~b,~d_1,~d_2>0,~4+\frac{8s-2\mu}{N}<p+q <2\2$ and $\alpha>0 $. For every $(u,v)\in S\D, ~\f$ has a unique critical point $k_{(u,v)}\in \R$, which is a strict maximum point at a positive level. Moreover, 
	\begin{itemize}
		\item[(i)] $\mathcal{P}\D=\mathcal{P}^{-}\D $.
		\item[(ii)] $\f$ is strictly decreasing on $(k_{(u,v)},+\infty)$, and $k_{(u,v)} <0 \implies \Pa(u,v)<0.$
		\item[(iii)] The map $(u,v) \in S\D \mapsto k_{(u,v)}\in \R$ is of class $\mathcal{C}^1$.
		\item[(iv)] If $\Pa(u,v) <0,$ then $ k_{(u,v)} <0$.
		\end{itemize}   
\end{lemma}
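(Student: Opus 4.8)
The plan is to reduce the fibering map $\f$ to the one-variable template treated in Lemma~\ref{a11}. Recall that under the $L^2$-invariant scaling one has $[k\star u]_s^2=e^{2sk}[u]_s^2$ and $\|k\star u\|_2^2=\|u\|_2^2$, together with the scalings $\cq\mapsto e^{2s\2 k}\cq$ and $\cpq\mapsto e^{s(\dpq)k}\cpq$, so that after the strictly increasing change of variable $t=e^{sk}$ (a bijection from $\R$ onto $(0,\infty)$) the map $\f$ takes the form $f(t)=\tilde a t^2+\tilde b t^4-\tilde c t^{\tilde p}-\tilde d t^{\tilde q}$ of Lemma~\ref{a11}, with $\tilde p=2\2=6$, $\tilde q=\dpq$, and coefficients $\tilde a=\frac a2([u]_s^2+[v]_s^2)$, $\tilde b=\frac b4([u]_s^4+[v]_s^4)$, $\tilde c=\frac1{\2}\cq$, $\tilde d=\alpha\cpq$. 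Since $(u,v)\in S\D$ forces $u\not\equiv0$ and $v\not\equiv0$ (the masses $d_1,d_2$ are positive), all four coefficients are strictly positive. The only hypothesis of Lemma~\ref{a11} that is not immediate is that both top exponents exceed $4$: the critical one is $2\2=6>4$ automatically, while a direct computation gives $\dpq=\frac{N(p+q-4)+2\mu}{2s}$, so the assumption $p+q>4+\frac{8s-2\mu}{N}$ is exactly equivalent to $\dpq>4$. Hence Lemma~\ref{a11} applies and yields a unique maximum point $t_0>0$ of $f$, at a positive level; pulling back through $t=e^{sk}$ produces the unique critical point $k_{(u,v)}=\frac1s\log t_0$ of $\f$, a strict global maximum with $\f(k_{(u,v)})=f(t_0)>0$.

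Parts (i), (ii) and (iv) then follow from this monotonicity picture together with the elementary identity $(\f)'(0)=s\,\Pa(u,v)$, obtained by differentiating the explicit expression for $\f$ and comparing with the definition of $\Pa$. Because $k_{(u,v)}$ is the unique critical point and a strict maximum, $(\f)'(k)>0$ for $k<k_{(u,v)}$ and $(\f)'(k)<0$ for $k>k_{(u,v)}$; in particular $\f$ is strictly decreasing on $(k_{(u,v)},+\infty)$, which is the first half of (ii). For (i), $(u,v)\in\mathcal{P}\D$ means $(\f)'(0)=0$, i.e.\ $0=k_{(u,v)}$, whence $(\f)''(0)=(\f)''(k_{(u,v)})<0$ and $(u,v)\in\mathcal{P}^-\D$; the reverse inclusion is trivial, so $\mathcal{P}\D=\mathcal{P}^-\D$ (and incidentally $\mathcal{P}^0\D=\mathcal{P}^+\D=\emptyset$). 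Finally, as $s>0$ the sign of $(\f)'(0)$ equals that of $\Pa(u,v)$, so $k_{(u,v)}<0\Leftrightarrow(\f)'(0)<0\Leftrightarrow\Pa(u,v)<0$, which gives both the second half of (ii) and assertion (iv).

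For the regularity claim (iii) I would argue exactly as in Lemma~\ref{a9}: apply the implicit function theorem to the $C^1$ map $\Psi(k,u,v):=(\f)'(k)$ on $\R\times S\D$. One has $\Psi(k_{(u,v)},u,v)=0$ and, by strict maximality together with $\mathcal{P}^0\D=\emptyset$, the nondegeneracy $\partial_k\Psi(k_{(u,v)},u,v)=(\f)''(k_{(u,v)})<0\neq0$, so the implicit function theorem produces a $C^1$ map $(u,v)\mapsto k_{(u,v)}$. The proof is essentially mechanical once Lemma~\ref{a11} is in hand; the only genuinely substantive points are the exponent bookkeeping—verifying that the $L^2$-supercritical window $4+\frac{8s-2\mu}{N}<p+q$ translates precisely into $\dpq>4$, so that Lemma~\ref{a11} is applicable—and the careful identification of $(\f)'(0)$ with $s\,\Pa(u,v)$ that underpins the equivalences in (ii) and (iv).
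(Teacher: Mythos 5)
Your proposal is correct and follows essentially the same route as the paper: the paper's proof also reduces $\f$ to the template of Lemma~\ref{a11} (with $\tilde p=2\2$, $\tilde q=\dpq>4$) and then refers to Soave's Lemma 6.1 for parts (i)--(iv), which are exactly the monotonicity and implicit-function-theorem arguments you spell out via the identity $(\f)'(0)=s\,\Pa(u,v)$. Your write-up is in fact more detailed than the paper's, and the exponent bookkeeping ($p+q>4+\frac{8s-2\mu}{N}\Leftrightarrow\dpq>4$) checks out.
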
	
\begin{proof}
    With the help of Lemma \ref{a11}, we obtain that $\f$ has a unique maximum point at a positive level. The remaining part can be proven using a similar approach as in \cite[Lemma 6.1]{soave2020normalized2}.  
\end{proof}
\begin{lemma}\label{a38}
For $4+\frac{8s-2\mu}{N}<p+q <2\2$, we have 
$$m_{\alpha}\D:=\inf_{\mathcal{P}\D}\J(u,v)>0.$$  	
\end{lemma}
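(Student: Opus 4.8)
The plan is to exploit the Pohozaev constraint to rewrite $\J$ on $\mathcal{P}\D$ as a sum of manifestly positive terms, and then to upgrade this pointwise positivity into a positive infimum by establishing a uniform lower bound on $[u]_s^2+[v]_s^2$ over $\mathcal{P}\D$. The first thing to notice is that, since $\2=3$ is assumed throughout this section, the hypothesis $4+\frac{8s-2\mu}{N}<p+q<6$ is equivalent, via $\dpq=\frac{N(p+q-4)+2\mu}{2s}$, to the clean condition $4<\dpq<6$; this is the range in which all the sign computations below work out.

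First I would use the Pohozaev identity \eqref{A1}. For $(u,v)\in\mathcal{P}\D$ it reads $a([u]_s^2+[v]_s^2)+b([u]_s^4+[v]_s^4)=2\cq+\alpha(\dpq)\cpq$, so that $\alpha\cpq=\frac{1}{\dpq}\big(a([u]_s^2+[v]_s^2)+b([u]_s^4+[v]_s^4)-2\cq\big)$. Substituting this into $\J(u,v)$ and collecting terms yields
$$\J(u,v)=a\Big(\tfrac12-\tfrac{1}{\dpq}\Big)([u]_s^2+[v]_s^2)+b\Big(\tfrac14-\tfrac{1}{\dpq}\Big)([u]_s^4+[v]_s^4)+\Big(\tfrac{2}{\dpq}-\tfrac13\Big)\cq.$$
Because $4<\dpq<6$, all three coefficients $\tfrac12-\tfrac{1}{\dpq}$, $\tfrac14-\tfrac{1}{\dpq}$ and $\tfrac{2}{\dpq}-\tfrac13$ are strictly positive; hence, dropping the last two nonnegative terms,
$$\J(u,v)\ge a\Big(\tfrac12-\tfrac{1}{\dpq}\Big)([u]_s^2+[v]_s^2)\qquad\text{for all }(u,v)\in\mathcal{P}\D.$$

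Next I would establish the uniform coercivity bound: there exists $A_0>0$ with $[u]_s^2+[v]_s^2\ge A_0$ for every $(u,v)\in\mathcal{P}\D$. Discarding the nonnegative $b$-term in the Pohozaev identity gives $a([u]_s^2+[v]_s^2)\le 2\cq+\alpha(\dpq)\cpq$, and then \eqref{Ai2} and \eqref{Ai4} bound the right-hand side by $C_1([u]_s^2+[v]_s^2)^3+C_2([u]_s^2+[v]_s^2)^{\frac{\dpq}{2}}$ for constants $C_1,C_2>0$ depending on $\shl$, $\alpha$, $d_1$, $d_2$. Dividing by $[u]_s^2+[v]_s^2>0$ leaves $a\le C_1([u]_s^2+[v]_s^2)^2+C_2([u]_s^2+[v]_s^2)^{\frac{\dpq}{2}-1}$; since both exponents $2$ and $\tfrac{\dpq}{2}-1$ are strictly positive (the latter because $\dpq>4$), the right-hand side tends to $0$ as $[u]_s^2+[v]_s^2\to0^+$, which forces $[u]_s^2+[v]_s^2$ to remain above a positive threshold $A_0$. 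Combining with the bound from the previous step gives $\M\ge a(\tfrac12-\tfrac{1}{\dpq})A_0>0$; moreover $\mathcal{P}\D\neq\emptyset$ by Lemma \ref{a40}, so $\M$ is a genuine finite quantity and the estimate shows $\M>0$.

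The only delicate point is the algebraic choice in the second paragraph: one must eliminate the lower-order coupling term $\cpq$ rather than the Hardy–Littlewood–Sobolev critical term $\cq$. Eliminating $\cq$ instead would produce the coefficient $\tfrac{\dpq}{6}-1$ in front of $\alpha\cpq$, which is negative on the entire admissible range $\dpq<6$ and therefore useless for a sign argument; it is precisely the elimination of $\cpq$ that converts $4<\dpq<6$ into three simultaneously positive coefficients. Everything else is a routine application of \eqref{A1} together with the estimates \eqref{Ai2} and \eqref{Ai4}.
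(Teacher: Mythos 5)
Your proof is correct and follows essentially the same strategy as the paper's: both arguments first derive a uniform positive lower bound $[u]_s^2+[v]_s^2\ge C_0>0$ on $\mathcal{P}\D$ from the Pohozaev identity combined with \eqref{Ai2} and \eqref{Ai4} (using $\dpq>4$), and then rewrite $\J$ on the constraint as a combination of terms with positive coefficients. The only difference is cosmetic — which term you eliminate: you subtract $\tfrac{1}{\dpq}\Pa$ to remove the coupling term $\alpha\cpq$, whereas the paper subtracts $\tfrac14\Pa$ to remove the quartic Kirchhoff term and arrives at $\J(u,v)=\tfrac{a}{4}([u]_s^2+[v]_s^2)+\alpha\bigl(\tfrac{\dpq}{4}-1\bigr)\cpq+\bigl(\tfrac12-\tfrac{1}{\2}\bigr)\cq$, which has the minor advantage of needing only $\dpq>4$ rather than $4<\dpq<6$ for positivity of the coefficients.
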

\begin{proof}
    Let an arbitrary element $(u,v)\in\mathcal{P}\D$. Using $\Pa(u,v)=0$, we obtain
    \begin{align*}
        a([u]_s^2+[v]_s^2)+\frac{b}{2}([u]_s^2+[v]_s^2)^2&<\alpha(\dpq)\cpq+2\cq\\
        &<\alpha B_{N,p,q,s,\mu}(d_1^2+d_2^2)^{\frac{p+q-(\dpq)}{2}}(\dpq)([u]_s^2+[v]_s^2)^{\frac{\dpq}{2}}+2\left(\frac{[u]_s^2+[v]_s^2}{2\shl}\right)^{\2},
    \end{align*}
    which implies $\displaystyle\inf_{(u,v)\in\mathcal{P}\D}([u]_s^2+[v]^2_s)\ge C_0>0$, for $\dpq>4$, and hence by definition of $\mathcal{P}\D$, we have
    \begin{equation*}
        \inf_{(u,v)\in\mathcal{P}\D}\left(\int_{\R^N}(I_{\mu}*|u|^{p})|v|^q+\int_{\R^N}(I_{\mu}*|u|^{\2})|v|^{\2}\right)>0.
    \end{equation*}
    Finally, combining these facts and using the definition of $\J(u,v)$, we conclude 
    \begin{equation*}\aligned
\inf_{(u,v)\in\mathcal{P}\D}\J(u,v) 
= \inf_{(u,v)\in\mathcal{P}\D}\left(
\frac{a}{4}\left([u]_s^2 + [v]_s^2\right)
+ \alpha\left(\frac{\dpq}{4} - 1\right) \int_{\mathbb{R}^N} (I_{\mu} * |u|^p) |v|^q \right. \\
\quad \left. + \left(\frac{1}{2} - \frac{1}{\2}\right) \int_{\mathbb{R}^N} (I_{\mu} * |u|^{\2}) |v|^{\2} 
\right) > 0,\endaligned
\end{equation*}
as desired.
\end{proof}
\begin{lemma}\label{c56}
	There exists $z>0$, sufficiently small such that 
$$0<\sup_{\overline{\mathcal{B}}_z}\J<m_{\alpha}\D$$	and	$$(u,v)\in {\overline{\mathcal{B}}_z} \implies \J(u,v) >0,~\Pa(u,v)>0,$$
	where $\mathcal{B}_z:=\{(u,v)\in S\D :([u]_s^2+[v]_s^2)^{\frac{1}{2}}<z \}$.
\end{lemma}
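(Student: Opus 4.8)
The plan is to estimate both $\J$ and $\Pa$ from below on $S\D$ by scalar functions of the single quantity $t:=([u]_s^2+[v]_s^2)^{1/2}$, and to exploit that, throughout the $L^2$-supercritical window $4+\frac{8s-2\mu}{N}<p+q<2\2$, one has $\dpq>4$, so that \emph{every} negative contribution carries a power of $t$ strictly larger than $2$. Consequently the positive quadratic term $\tfrac a2 t^2$ dominates near $t=0$, which forces positivity of both functionals on a sufficiently small ball, while a crude upper bound pushes $\sup_{\overline{\mathcal{B}}_z}\J$ below the level $m_{\alpha}\D$ supplied by Lemma~\ref{a38}.

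First I would record that on $S\D$ one always has $t>0$: if $[u]_s=0$ then $u$ is constant, which is incompatible with $\|u\|_2^2=d_1^2>0$; moreover $\mathcal{B}_z\cap S\D\neq\emptyset$ for every $z>0$, since applying the $L^2$-invariant scaling $k\star(u_0,v_0)$ to a fixed pair and letting $k\to-\infty$ drives $t=e^{sk}([u_0]_s^2+[v_0]_s^2)^{1/2}\to 0$. For the positivity of $\J$ I reuse \eqref{A20}, which gives $\J(u,v)\ge g(t)$ with $g(t)=\frac a2 t^2+\frac b8 t^4-\frac{1}{\2(2\shl)^{\2}}t^{2\2}-\alpha B_{N,p,q,s,\mu}(d_1^2+d_2^2)^{\frac{p+q-(\dpq)}{2}}t^{\dpq}$; since the exponents $4,\,2\2,\,\dpq$ all exceed $2$, we have $g(t)=\frac a2 t^2+o(t^2)$ as $t\to 0^+$, hence $g>0$ on some interval $(0,z_1]$ and thus $\J>0$ on $\overline{\mathcal{B}}_{z_1}$. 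For $\Pa$ I run the identical argument: applying \eqref{Ai2} and \eqref{Ai4} to $\Pa(u,v)=a t^2+b([u]_s^4+[v]_s^4)-2\cq-\alpha(\dpq)\cpq$ together with $[u]_s^4+[v]_s^4\ge\frac12 t^4$ yields $\Pa(u,v)\ge h(t)$, where $h(t)=a t^2+\frac b2 t^4-\frac{2}{(2\shl)^{\2}}t^{2\2}-\alpha(\dpq)B_{N,p,q,s,\mu}(d_1^2+d_2^2)^{\frac{p+q-(\dpq)}{2}}t^{\dpq}$; again $h(t)=a t^2+o(t^2)$, so $h>0$ on some $(0,z_2]$ and $\Pa>0$ on $\overline{\mathcal{B}}_{z_2}$.

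For the strict gap below $m_{\alpha}\D$ I would discard the (negative) nonlinear terms in $\J$ and use $[u]_s^4+[v]_s^4\le t^4$ to obtain the uniform bound $\J(u,v)\le\frac a2 t^2+\frac b4 t^4\le\frac a2 z^2+\frac b4 z^4$ on $\overline{\mathcal{B}}_z$. The right-hand side tends to $0$ as $z\to 0^+$, whereas Lemma~\ref{a38} guarantees $m_{\alpha}\D>0$ in this regime; hence there is $z_3>0$ with $\frac a2 z_3^2+\frac b4 z_3^4<m_{\alpha}\D$, giving $\sup_{\overline{\mathcal{B}}_{z_3}}\J<m_{\alpha}\D$. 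Taking $z:=\min\{z_1,z_2,z_3\}$, all requirements hold simultaneously on the nonempty set $\overline{\mathcal{B}}_z$, and $\sup_{\overline{\mathcal{B}}_z}\J>0$ because $\J>0$ there.

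The argument is essentially elementary once the coercivity estimates \eqref{Ai2}, \eqref{Ai4} and \eqref{A20} are in hand, so I do not expect a genuine obstacle. The only point demanding care is the bookkeeping of exponents: one must verify that $\dpq>4>2$ throughout the supercritical range so that the $t^{\dpq}$ term is truly of higher order than the leading $t^2$, and that a single radius $z$ can be chosen to enforce $\J>0$, $\Pa>0$, and the strict inequality $\sup_{\overline{\mathcal{B}}_z}\J<m_{\alpha}\D$ at once.
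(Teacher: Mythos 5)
Your proposal is correct and follows essentially the same route as the paper: both bound $\J$ and $\Pa$ from below by scalar functions of $t=([u]_s^2+[v]_s^2)^{1/2}$ via \eqref{Ai2} and \eqref{Ai4}, use that all negative terms carry powers of $t$ strictly greater than $2$ to get positivity for small $z$, and then bound $\J$ from above by $\tfrac a2 t^2+\tfrac b4([u]_s^4+[v]_s^4)$ together with $m_{\alpha}\D>0$ from Lemma~\ref{a38} to obtain the strict gap. Your additional remarks on the non-emptiness of $\mathcal{B}_z\cap S\D$ and the explicit choice $z=\min\{z_1,z_2,z_3\}$ are fine refinements of the same argument.
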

\begin{proof}
    Applying \eqref{Ai2} and \eqref{Ai4}, we have
    \begin{align*}
        \J(u,v)\ge&\frac{a}{2}\Big([u]_s^2+[v]_s^2  \Big)+ \frac{b}{8}\Big([u]_s^2+[v]_s^2  \Big)^2-\frac{1}{\2} \Big(\frac{[u]_s^2+[v]_s^2}{2\shl}  \Big)^{\2}
-\\
&\alpha B_{N,p,q,s,\mu}(d_1^2+d_2^2)^{\frac{p+q-(\dpq)}{2}}\Big([u]_s^2+[v]_s^2  \Big)^{\frac{\dpq}{2}}>0,
    \end{align*} 
    and
    \begin{align*}
        P_{\alpha}(u,v)\ge &a\Big([u]_s^2+[v]_s^2  \Big)+ \frac{b}{2}\Big([u]_s^2+[v]_s^2  \Big)^2-{2}\Big(\frac{[u]_s^2+[v]_s^2}{2\shl}  \Big)^{\2}
\\&-\alpha B_{N,p,q,s,\mu}(d_1^2+d_2^2)^{\frac{p+q-(\dpq)}{2}}(\dpq)\Big([u]_s^2+[v]_s^2  \Big)^{\frac{\dpq}{2}}>0,
    \end{align*} 
    if $(u,v)\in {\overline{\mathcal{B}}_z}$ with $z$ small enough, since $\dpq>4.$ If necessary replacing $z$ with a small quantity, we also have $\J(u,v)\le  \frac{a}{2}\Big([u]_s^2+[v]_s^2  \Big)+ \frac{b}{4}\Big([u]_s^4+[v]_s^4  \Big)<m_{\alpha}\D$, for every $(u,v)\in \overline{\mathcal{B}}_z$.
\end{proof}
\begin{lemma}\label{a12}
	Let $d_1,~d_2,~\tilde{d_1},~\tilde{d_2}>0$ such that $\tilde{d_1}<d_1$ and $\tilde{d_2}<d_2$. Then $m_{\alpha}\D <m_{\alpha}(\tilde{d_1},\tilde{d_2})$.
\end{lemma}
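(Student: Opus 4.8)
The plan is to split the claim into two monotonicity statements that are each tractable, and then chain them. Set $c:=\min\{d_1/\tilde d_1,\ d_2/\tilde d_2\}>1$, so that $c\tilde d_1\le d_1$ and $c\tilde d_2\le d_2$ with equality in (at least) one slot. I will prove
\begin{equation*}
m_\alpha(c\tilde d_1,c\tilde d_2)<m_\alpha(\tilde d_1,\tilde d_2)\qquad\text{and}\qquad \M\le m_\alpha(c\tilde d_1,c\tilde d_2),
\end{equation*}
the first being a \emph{strict} drop under a common dilation of both components, the second a \emph{non-strict} monotonicity under increasing a single mass; chaining them gives $\M<m_\alpha(\tilde d_1,\tilde d_2)$. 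Throughout I use Lemma~\ref{a40}: for every $(u,v)\in S\D$ the fiber map $\f(k)=\J(k\star(u,v))$ has a unique critical point, which is its strict maximum, and $\mathcal P\D=\mathcal P^{-}\D$; consequently $\M=\inf\big\{\max_{k\in\R}\J(k\star(u,v)):(u,v)\in S\D\big\}$, and any $(u,v)\in\mathcal P\D$ satisfies $\max_k\f(k)=\f(0)=\J(u,v)$.

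For the strict step I take a (near-)minimizer $(u,v)\in\mathcal P(\tilde d_1,\tilde d_2)$ and scale both components by the \emph{same} factor $c$, which is what makes the comparison close. Writing $c=e^{s\beta}$ and noting that the seminorm term, the Kirchhoff quartic term and the critical term $\cq$ all share homogeneity in $(k,\beta)$, a direct computation gives the identity
\begin{equation*}
\J\big(k\star(cu,cv)\big)=\f(k+\beta)+\alpha\big(c^{\dpq}-c^{p+q}\big)\,e^{s(\dpq)k}\cpq .
\end{equation*}
The decisive fact is the exponent inequality $\dpq<p+q$: since $\delta_r-r=\big((N-2s)r-2N+\mu\big)/2s$ vanishes at $r=\2$ and is strictly negative for $r<\2$, the hypothesis $2_{\mu,*}<p,q<\2$ forces $\delta_p<p$ and $\delta_q<q$. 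With $c>1$ and $\cpq>0$ the correction term is strictly negative for all $k$, whence $\max_k\J(k\star(cu,cv))<\max_k\f(k)=\J(u,v)$, and projecting the competitor back onto $\mathcal P(c\tilde d_1,c\tilde d_2)$ yields $m_\alpha(c\tilde d_1,c\tilde d_2)<m_\alpha(\tilde d_1,\tilde d_2)$.

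For the non-strict step I increase a single mass, say the second, from $\tilde\rho$ to $\rho\ge\tilde\rho$ with the first mass fixed. Given a near-minimizer $(u,v)$ I glue a widely spread, far-away nonnegative bump $\psi_L(x)=L^{-N/2}\chi(x/L)$, for which $\|\psi_L\|_2$ is fixed while $[\psi_L]_s^2=L^{-2s}[\chi]_s^2\to0$; translating it by $z$ with $|z|\to\infty$ kills every cross term (the fractional cross seminorm, the $L^2$ overlap, and the cross Choquard integrals) through the decay of $|x-y|^{-\mu}$ and $|x-y|^{-N-2s}$. Choosing $\|\chi\|_2^2=\rho^2-\tilde\rho^2$, the pair $\big(u,\ v+\psi_L(\cdot-z)\big)$ meets the larger constraint, and its fiber coefficients converge to those of $(u,v)$; since the leading coefficients stay bounded away from $0$ and $\infty$, the fiber maximizers remain in a compact set and $\max_k\J(k\star(\cdot))$ converges, giving $\limsup\le \max_k\f(k)<m_\alpha(d_1',\tilde\rho)+\eta$ and hence $m_\alpha(d_1',\rho)\le m_\alpha(d_1',\tilde\rho)$ after letting the optimization error $\eta\to0$.

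The main obstacle lies in the strict step. Two issues need care: the compactness of the fiber maximizers under the bump perturbation, which I obtain from the uniqueness and coercivity in Lemma~\ref{a40}; and, more seriously, transferring the strict gain $\alpha\big(c^{p+q}-c^{\dpq}\big)e^{s(\dpq)k}\cpq$ to the \emph{infimum}, which requires $\cpq$ to stay bounded away from $0$ along a minimizing sequence for $m_\alpha(\tilde d_1,\tilde d_2)$. This is precisely where $\alpha>0$ and the $L^2$-subcritical coupling are indispensable: for the purely critical problem the scaling is an exact symmetry and the level would be mass-independent. I would secure the uniform lower bound on $\cpq$ from the existence of an actual minimizer on $\mathcal P(\tilde d_1,\tilde d_2)$; absent that, one must rule out $\cpq\to0$ separately, which does \emph{not} follow from $\Pa=0$ and Lemma~\ref{a38} alone (these bound only the total $\cq+\cpq$ and the quantity $[u]_s^2+[v]_s^2$ from below), and this is the genuinely delicate point of the argument.
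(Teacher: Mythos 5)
Your decomposition is genuinely different from the paper's. Your Step 2 (gluing a widely spread, far-away bump that raises one mass while perturbing the fiber coefficients only by $o(1)$, then using boundedness of the fiber maximizers) is in substance the paper's entire proof: the paper truncates $(u,v)$ to compact support, adds a disjointly supported pair $\tau\star(\psi_{d_1},\psi_{d_2})$ raising \emph{both} masses at once, checks that the projection parameter $t_\tau$ stays bounded so that all error terms are $O(\epsilon)$, and concludes $\M\le m_{\alpha}(\tilde d_1,\tilde d_2)+\epsilon$ for every $\epsilon>0$. Your Step 1 is an ingredient the paper does not have, and the dilation identity is correct: since $\delta_r-r=\frac{(N-2s)r-2N+\mu}{2s}$ vanishes at $r=\2$ and is negative for $r<\2$, indeed $\dpq<p+q$, so the correction term $\alpha(c^{\dpq}-c^{p+q})e^{s(\dpq)k}\cpq$ is strictly negative whenever $\cpq>0$.

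The genuine gap is exactly where you place it, and it is not removed: the strict inequality does not survive passage to the infimum. For a fixed $(u,v)\in\mathcal P(\tilde d_1,\tilde d_2)$ you get $m_{\alpha}(c\tilde d_1,c\tilde d_2)\le\max_k\J(k\star(cu,cv))<\f(0)=\J(u,v)$, but along a minimizing sequence $(u_n,v_n)$ the gain $\alpha\left(c^{p+q}-c^{\dpq}\right)e^{s(\dpq)k_n^*}\cnq$ must be bounded away from zero. The factor $e^{s(\dpq)k_n^*}$ can be controlled (from $\Pa=0$, the bound $[u_n]_s^2+[v_n]_s^2\ge C_0>0$ of Lemma \ref{a38}, and $\dpq>4$ one gets $e^{k_n^*}$ bounded below), but a uniform positive lower bound on $\cnq$ is not available at this stage: an actual minimizer on $\mathcal P(\tilde d_1,\tilde d_2)$ is precisely what the surrounding theorem is trying to produce, and Lemma \ref{a38} only bounds the combination $2\cq+\alpha(\dpq)\cpq$ from below. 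A possible repair is to note that if $\cnq\to0$ then the computation of Step 3 of Proposition \ref{a3} forces $m_{\alpha}(\tilde d_1,\tilde d_2)\ge\frac{ab\shl^3}{2}+\frac{b^3\shl^6}{12}+\frac{2}{3}\left(\frac{b^2\shl^4}{4}+a\shl\right)^{3/2}$ and to conclude via the strict upper bound of Lemma \ref{lemma_5.6}; but that lemma is proved \emph{using} the present one, so the logic would need rearranging to avoid circularity. As written, your argument delivers only $\M\le m_{\alpha}(\tilde d_1,\tilde d_2)$ --- which, it should be said, is also all that the paper's own proof establishes, since proving $\M\le m_{\alpha}(\tilde d_1,\tilde d_2)+\epsilon$ for all $\epsilon>0$ does not give the strict inequality claimed in the statement.
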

\begin{proof}
	To prove this lemma, it is sufficient to show that for arbitrary $\epsilon>0$, one has 
	\begin{equation}\label{A21}
	m_{\alpha}\D \le m_{\alpha}(\dg,\df)+\epsilon.
	\end{equation}   
	By definition of $m_{\alpha}(\dg,\df)$, there exists $(u,v)\in \mathcal{P}(\dg,\df)$, such that 
\begin{equation}\label{A22}
	\J(u,v)\le m_{\alpha}(\dg,\df,)+\frac{\epsilon}{2}.
\end{equation}
	Let $\phi \in C_0^{\infty}(\R^N)$ be radial function, such that $0\le\phi\le1$ and 
	\begin{equation*}\phi(x)=
	\begin{cases}
	1 &\text{if} ~\|x\|_2 \le 1,\\
	0 &\text{if}~\|x\|_2 \ge 2 .
	\end{cases}
	\end{equation*}
	For $\delta>0$, we define
	$u_{\delta}(x):=u(x)\phi(\delta x)~and~v_{\delta}(x):=v(x)\phi(\delta x)$,
	so that $(u_{\delta},v_{\delta}) \to (u,v)$ in $\h$ as $\delta \to 0^+$.
	Then by Corollary \ref{a0}, $t_{\alpha}\star(u_{\delta},v_{\delta})\to t_{\alpha}\star(u,v)$ in $\h$ as $\delta \to 0^+$. Fix $\delta>0$, such that 
	\begin{equation}\label{A23}
\J(t_{\alpha}\star(u_{\delta},v_{\delta}))\le \J(u,v)+\frac{\epsilon}{4}.	
	\end{equation}

	Next, we choose $\psi \in C_0^{\infty}(\R^N)$, with $supp(\psi)
\subset \R^N\backslash B\left(0,\frac{4}{\delta}\right)$ and set 
\begin{equation*}\psi_{d_1}=\dfrac{\sqrt{d_1^2-\|u\|_2^2}}{\|\psi\|_2^2}\psi\quad \text{and}\quad  \psi_{d_2}=\dfrac{\sqrt{d_2^2-\|v\|_2^2}}{\|\psi\|_2^2}\psi,
\end{equation*}
and, thus
$$(\tilde{u}_{\tau},\tilde{v}_{\tau}):=(u_{\delta}+\tau\star\psi_{d_1},v_{\delta}+\tau\star\psi_{d_2})\in S\D,$$
as $\left(supp(u_{\delta})\cup supp(v_{\delta})\right)\cap \left(supp(\tau\star\psi_{d_1})\cup supp(\tau\star\psi_{d_2})\right)=\emptyset
$ for $\tau\le 0$.\\
By Lemma \ref{a40}, there exists a unique $t_{\tau}$, such that $\Pa(t_{\tau}\star(\tilde{u}_{\tau},\tilde{v}_{\tau}))=0$, as a result, we have
\begin{align*}
\frac{a}{e^{(2\2-2)st_{\tau}}}([\tilde{u}_{\tau}]_s^2+[\tilde{v}_{\tau}]_s^2)+\frac{b}{e^{(2\2-4)st_{\tau}}}([\tilde{u}_{\tau}]^4_s+[\tilde{v}_{\tau}]^4_s)&=
\frac{\alpha(\dpq)}{e^{(2\2-(\dpq))st_{\tau}}}\int_{\R^N} (I_{\alpha}*|\tilde{u}_{\tau}|^{p})|\tilde{v}_{\tau}|^{q}\\
&
+2\int_{\R^N} (I_{\alpha}*|\tilde{u}_{\tau}|^{\2})|\tilde{v}_{\tau}|^{\2}.
\end{align*}
It follows that $\limsup\limits_{\tau \to -\infty}t_{\tau}<+\infty$, because $(\tilde{u}_{\tau},\tilde{v}_{\tau}) \to (u_{\delta},v_{\delta})\not=0 $ as $\tau\to -\infty$.
\par 
Consequently $t_{\tau}+\tau \to -\infty$ as $\tau\to -\infty$ and  for small $\tau$ enough, we have 
\begin{align}\label{A24}
2\left( \int_{\R^N}\int_{\R^N}
\dfrac{(t_{\tau}\star u_{\delta}(x))((t_{\tau}+\tau)\star\psi_{d_1}(x))}{|x-y|^{N+2s}}\dx\dy+\int_{\R^N}\int_{\R^N}\dfrac{(t_{\tau}\star v_{\delta}(x))((t_{\tau}+\tau)\star\psi_{d_2}(x))}{|x-y|^{N+2s}}\dx\dy\right) <\frac{\epsilon}{8}
\end{align}

and 
\begin{equation}\label{A25}
\J\big((t_{\tau}+\tau)\star(\psi_{d_1},\psi_{d_2})\big)<\frac{\epsilon}{8}.
\end{equation}
Now using equation \eqref{A22}-\eqref{A25} in \eqref{A21}, we deduce
\begin{align*}
m_{\alpha}\D &\le \J(t_{\tau}\star(\tilde{u}_{\tau},\tilde{v}_{\tau}))=\J\left(t_{\tau}\star(u_{\delta},v_{\delta})+(t_\tau+\tau)\star(\psi_{d_1},\psi_{d_2})\right)\\
& \le \J(t_{\tau}\star(u_{\delta},v_{\delta}))+\J((t_{\tau}+\tau)\star(\psi_{d_1},\psi_{d_2}))+\frac{\epsilon}{8}\\
&\le \J(t_{\tau}\star(u_{\delta},v_{\delta}))+\frac{\epsilon}{4}\\
&\le m_{\alpha}(\dg,\df)+\epsilon,
\end{align*} 
which completes the proof.
\end{proof}

\begin{lemma}\label{lemma_5.6}	Let $s\in (0,1),~a,~d_1,~d_2>0,~4+\frac{8s-2\mu}{N}<p+q <2\2=6$ and $\alpha>0$. Then $$m_{r,\alpha}\D<\frac{ab\shl^3}{2}+\frac{b^3\shl^6}{12}+\frac{2}{3}\left(\frac{b^2\shl^4}{4}+a\shl \right)^{\frac{3}{2}}. $$
	
\end{lemma}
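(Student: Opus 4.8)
The plan is to bound $m_{r,\alpha}\D$ from above by the fiber--maximum of a single, well--chosen radial test pair, and then to show that this maximum lies strictly below the stated threshold.

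First I would use Lemma \ref{a40}: every $(u,v)\in S_r\D$ admits a unique $k_{(u,v)}\in\R$ with $k_{(u,v)}\star(u,v)\in\mathcal{P}\D$, and this point realizes $\max_{k\in\R}\f(k)=\max_{k\in\R}\J(k\star(u,v))$. Since $k_{(u,v)}\star(u,v)$ remains radial, this yields
$$m_{r,\alpha}\D=\inf_{\mathcal{P}\D\cap\h_r}\J\le \sup_{k\in\R}\J\big(k\star(u_\epsilon,v_\epsilon)\big)$$
for every radial pair $(u_\epsilon,v_\epsilon)\in S_r\D$. Hence it suffices to exhibit one such pair whose fiber--maximum is below the threshold.

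Next I would take the truncated Choquard bubble $w_\epsilon$ from \eqref{a102} and set $(u_\epsilon,v_\epsilon):=\big(d_1 w_\epsilon/\|w_\epsilon\|_2,\,d_2 w_\epsilon/\|w_\epsilon\|_2\big)\in S_r\D$, so that $[u_\epsilon]_s^2+[v_\epsilon]_s^2$, $\cq$ and $\cpq$ all reduce to the scalar quantities estimated in Lemma \ref{Ak3}. Writing the fiber map in the variable $t=e^{2sk}([u_\epsilon]_s^2+[v_\epsilon]_s^2)$ and dropping for the moment the subcritical term, the principal part is, in the balanced configuration, the cubic $\tfrac a2 t+\tfrac b8 t^2-\tfrac{t^3}{24\shl^3}$, where I use \eqref{Ai2} together with the sharp values $[w_\epsilon]_s^2=\shl^{3/2}+O(\epsilon^{N-2s})$ and $\int_{\R^N}(I_\mu*|w_\epsilon|^3)|w_\epsilon|^3=\shl^{3/2}+O(\epsilon^{N})$ from Lemma \ref{Ak3}. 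A direct maximization of this cubic is governed by the very root $\zeta=\tfrac{b\shl^2}{2}+\sqrt{\tfrac{b^2\shl^4}{4}+a\shl}$ appearing in \eqref{A16}, and it returns precisely $\frac{ab\shl^3}{2}+\frac{b^3\shl^6}{12}+\frac{2}{3}\big(\tfrac{b^2\shl^4}{4}+a\shl\big)^{3/2}$, i.e.\ the threshold itself. Consequently the leading order of the fiber--maximum equals the threshold, and strictness must come from the lower--order terms.

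This is the heart of the argument. The subcritical coupling contributes the negative term $-\alpha e^{s(\dpq)k}\cpq$, and after evaluation at the near--optimal $k$ its magnitude is controlled exactly by the ratio $\int_{\R^N}(I_\mu*|w_\epsilon|^p)|w_\epsilon|^q/\|w_\epsilon\|_2^{\,p+q-(\dpq)}$ estimated in Lemma \ref{Ak3}(v); competing against it are the positive remainders $O(\epsilon^{N-2s})$, $O(\epsilon^{N})$ and the mass contribution of Lemma \ref{Ak3}(iii) arising from $[w_\epsilon]_s^2$ and $\int_{\R^N}(I_\mu*|w_\epsilon|^3)|w_\epsilon|^3$. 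I would therefore expand $\sup_{k}\J(k\star(u_\epsilon,v_\epsilon))$ to this order and check, using $\dpq>4$ (equivalently $p+q>4+\tfrac{8s-2\mu}{N}$), that the negative subcritical gain dominates the positive critical remainder as $\epsilon\to0^{+}$, so that a sufficiently small $\epsilon$ forces the strict inequality.

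The main obstacle is precisely this order comparison: one must verify that the power of $\epsilon$ (or the factor $|\log\epsilon|$) carried by the gain in Lemma \ref{Ak3}(v) beats those of the critical remainders, and this is exactly where the fine case distinctions on $N,p,q,s$ enter. A secondary difficulty is the mass asymmetry $d_1\neq d_2$: a single aligned bubble cannot simultaneously minimize the Kirchhoff contribution $[u_\epsilon]_s^4+[v_\epsilon]_s^4$ and saturate the Hardy--Littlewood--Sobolev bound \eqref{Ai2}, so the construction must be arranged (for instance by tuning the relative concentration scales of the two components) so that the leading fiber--maximum sits \emph{at} the threshold rather than above it; once this is achieved, the subcritical term secures the strict inequality.
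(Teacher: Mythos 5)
Your overall strategy (bound $m_{r,\alpha}\D$ by the fiber--maximum of a bubble--based test pair, identify the leading term with the threshold via the cubic whose maximiser is $\zeta=\tfrac{b\shl^2}{2}+\sqrt{\tfrac{b^2\shl^4}{4}+a\shl}$, and let the subcritical term force strictness) is exactly the paper's, and your order--comparison step (the ratio in Lemma \ref{Ak3}(v) versus the $O(\epsilon^{\frac{N-2s}{2}})$ remainders, with the case distinctions of Theorem \ref{AA2}) matches the paper's computation. But there is a genuine gap at the point you yourself flag: the mass asymmetry. With the aligned pair $(d_1\we/\|\we\|_2,\,d_2\we/\|\we\|_2)$ and $d_1\neq d_2$, the leading--order fiber maximum is \emph{strictly above} the threshold, not at it: the Kirchhoff term carries the factor $\frac{d_1^4+d_2^4}{(d_1^2+d_2^2)^2}>\frac12$, and the critical Choquard coupling carries the factor $\frac{d_1^3d_2^3}{\left(\frac{d_1^2+d_2^2}{2}\right)^3}<1$, so relative to the balanced cubic $\tfrac a2t+\tfrac b8t^2-\tfrac{t^3}{24\shl^3}$ both deviations push the maximum up by an amount of order one. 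No $o(1)$ subcritical gain can overcome an $O(1)$ excess, and your proposed remedy of tuning the relative concentration scales of the two components only makes matters worse, since decorrelating the bubbles further degrades the cross term $\int_{\R^N}(I_{\mu}*|u|^{3})|v|^{3}$ relative to the seminorms.

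The missing idea is the reduction to equal masses via the strict monotonicity of the level in the masses (Lemma \ref{a12}): assuming without loss of generality $d_1\le d_2$, one has $m_{r,\alpha}\D<m_{r,\alpha}(d_1,d_1)$, and it then suffices to test with the \emph{symmetric} pair $(u_{\epsilon},u_{\epsilon})\in S(d_1,d_1)$, $u_{\epsilon}=d_1\we/\|\we\|_2$, for which the balanced configuration is exact and the leading fiber maximum equals the threshold up to $O(\epsilon^{\frac{N-2s}{2}})$. This is precisely how the paper proceeds (it also needs the lower bound $e^{2st_{\epsilon,\alpha}}\ge K_1\|\we\|_2^2/d_1^2$ on the perturbed maximum point so that the coefficient of the negative term $-\alpha e^{s(\dpq)t_{\epsilon,\alpha}}\cdots$ does not degenerate, a step you should make explicit). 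Without this reduction, or some substitute for it, your argument does not close.
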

	\begin{proof} Let $u_{\epsilon}:=d_1\frac{ w_{\epsilon}}{\|w_{\epsilon}\|_2}$, where $w_{\epsilon}$ is define in \eqref{a102}. One can observe that $(u_{\epsilon},u_{\epsilon})\in S(d_1,d_1)$, by Lemma \ref{a40}, there exists a unique $t_{\epsilon,\alpha}\in \R$, such that 
    \begin{equation*}
        m_{r,\alpha}(d_1,d_1)\le \J(t_{\epsilon,\alpha}\star(u_{\epsilon},u_{\epsilon}))=\displaystyle\max_{k\in\R}\J(k\star (u_{\epsilon},u_{\epsilon}))=\max_{k\in\R}\f(k)\quad \forall \epsilon>0.
    \end{equation*}
Without loss of generality we can assume that $d_1<d_2$, then by Lemma \ref{a12}, we have 
 \begin{equation*}
     m_{r,\alpha}\D<m_{r,\alpha}(d_1,d_1).
 \end{equation*}
So, it is sufficient to show that $$\displaystyle\max_{k\in \R}\f(k)=\J(t_{\epsilon,\alpha}\star (u_{\epsilon},u_{\epsilon}))<\frac{ab\shl^3}{2}+\frac{b^3\shl^6}{12}+\frac{2}{3}\left(\frac{b^2\shl^4}{4}+a\shl \right)^{\frac{3}{2}} .$$
First, we prove 
$$\displaystyle\max_{k\in \R}\Phi_0(k)=\mathcal{J}_0(t_{\epsilon,0}\star(u_{\epsilon},
u_{\epsilon})) =\frac{ab\shl^3}{2}+\frac{b^3\shl^6}{12}+\frac{2}{3}\left(\frac{b^2\shl^4}{4}+a\shl \right)^{\frac{3}{2}} +O(\epsilon^{\frac{N-2s}{2}}).$$ Since 
\begin{align*}
\Phi_0(t)
=ae^{2st}\frac{d_1^2}{\|w_{\epsilon}\|_2^2}[w_{\epsilon}]_s^2+\frac{be^{4st}d_1^4}{2\|w_{\epsilon}\|_2^4}[w_{\epsilon}]_s^4-\frac{e^{6st}d_1^6}{3\|w_{\epsilon}\|_2^6}\int_{\R^N}\left(I_{\mu}*|w_{\epsilon}|^3\right)|w_{\epsilon}|^3,  
\end{align*}

we deduce $\Phi_0(t)$ has a maximum point $t_{\epsilon,0}$, such that 
\begin{align*}
\frac{d_1^2e^{2s\teo}}{\|w_{\epsilon}\|^2_2}&=\frac{b[\we]_s^4}{2\int_{\R^N}(I_{\mu}*|\we|^3)|\we|^3}+\sqrt{\frac{b^2[\we]_s^8}{4((\int_{\R^N}(I_{\mu}*|\we|^3)|\we|^3)^2}+\frac{a[\we]_s^2}{\int_{\R^N}(I_{\mu}*|\we|^3)|\we|^3}}\\
&\le \frac{b(\shl^{\frac{3}{2}}+O(\epsilon^{N-2s}))^2}{2(\shl^{\frac{3}{2}}-O(\epsilon^N))}+\sqrt{\frac{b^2(\shl^{\frac{3}{2}}+O(\epsilon^{N-2s}))^4}{4(\shl^{\frac{3}{2}}-O(\epsilon^N))^2}+\frac{a(\shl^{\frac{3}{2}}+O(\epsilon^{N-2s}))}{(\shl^{\frac{3}{2}}-O(\epsilon^N))} }\\
&=\frac{b\shl^{\frac{3}{2}}}{2}+\sqrt{a+\frac{b^2\shl^3}{4}+O(\epsilon^{N-2s})}+O(\epsilon^{N-2s})\\
&\le \frac{b\shl^{\frac{3}{2}}}{2}+\sqrt{a+\frac{b^2\shl^3}{4}}+O(\epsilon^{\frac{N-2s}{2}})
=\frac{\zeta}{\sqrt{\shl}}+O(\epsilon^{\frac{N-2s}{2}}),
\end{align*} 
where $\zeta=\dfrac{b\shl^2}{2}+\sqrt{\dfrac{b^2\shl^4}{4}+a\shl}$.
Similarly, using \eqref{Ai}, we have
\begin{align*}
\frac{d_1^2e^{2s\teo}}{\|w_{\epsilon}\|^2_2}\ge\frac{\zeta}{\sqrt{\shl}}.
\end{align*} 
 Which leads to 
\begin{align*}
\sup_{k\in\R}\Phi_{0}(k)&=\Phi_{0}(\teo) =a\frac{d_1^2e^{2s\teo}}{\|w_{\epsilon}\|^2_2}[w_{\epsilon}]_s^2+\frac{be^{4st}}{2}\frac{d_1^4e^{4s\teo}}{\|w_{\epsilon}\|^4_2}[w_{\epsilon}]_s^4-\frac{d_1^6e^{6s\teo}}{3\|w_{\epsilon}\|^6_2}\int_{\R^N}(I_{\mu}*|w_{\epsilon}|^3)|w_{\epsilon}|^3\\
\notag &\le a\left(\frac{\zeta}{\sqrt{\shl}}+O(\epsilon^{\frac{N-2s}{2}}) \right)\left(\shl^{\frac{3}{2}}+O(\epsilon^{N-2s}) \right)\\ \notag &+
\frac{b}{2}\left(\frac{\zeta}{\sqrt{\shl}}+O(\epsilon^{\frac{N-2s}{2}}) \right)^2\left(\shl^{\frac{3}{2}}+O(\epsilon^{N-2s}) \right)^2-\left(\frac{\zeta}{\sqrt{\shl}}\right)^3\frac{\shl^{\frac{3}{2}}-O(\epsilon^N)}{3}\\
&=a\zeta\shl+\frac{b\zeta^2\shl^2}{2}-\frac{\zeta^3}{3}+O(\epsilon^{\frac{N-2s}{2}}) \\
&<\frac{2a\shl\zeta}{3}+\frac{b\shl^2\zeta^2}{6}+O(\epsilon^{\frac{N-2s}{2}})\\
&=\frac{ab\shl^3}{2}+\frac{b^3\shl^6}{12}+\frac{2}{3}\left(\frac{b^2\shl^4}{4}+a\shl \right)^{\frac{3}{2}} +O(\epsilon^{\frac{N-2s}{2}}).
\end{align*}
To get an estimate for $t_{\epsilon,\alpha}$, from $(\f)'(t_{\epsilon,\alpha})=\Pa(t_{\epsilon,\alpha}\star (u_{\epsilon},u_{\epsilon}))=0$, we obtain
 \begin{align*}
  ae^{2s\twe}d_1^2\frac{[\we]_s^2}{\|\we\|_2^2}+ be^{4s\twe}d_1^4\frac{[\we]_s^4}{\|\we\|_2^4}&=\alpha(\dpq)e^{(\dpq)s\twe}\frac{d_1^{p+q}}{\|\we\|^{p+q}}\int_{\R^N}(I_{\mu}*|\we|^p)|\we|^q \\
  &+\frac{e^{6s\twe}d_1^6}{\|\we\|^6_2}\int_{\R^N}(I_{\mu}*|\we|^3)|\we|^3
  \\&>\frac{e^{6s\twe}d_1^6}{\|\we\|^6_2}\int_{\R^N}(I_{\mu}*|\we|^3)|\we|^3.
 \end{align*}
 It results in that $\frac{e^{2s\twe}d_1^2}{\|\we\|_2^2}<\frac{e^{2st_{\epsilon,0}}d_1^2}{\|\we\|_2^2}$, so we have
 \begin{align}\label{A30}
 \notag \frac{e^{2s\twe}d_1^2}{\|\we\|_2^2}&
\le \frac{b[\we]_s^4}{\int_{\R^N}(I_{\mu}*|\we|^3)|\we|^3}+\sqrt{\frac{b^2[\we]_s^8}{4\int_{\R^N}(I_{\mu}*|\we|^3)|\we|^3)^2} +\frac{a[\we]_s^2}{\int_{\R^N}(I_{\mu}*|\we|^3)|\we|^3}}
\\&\le \frac{b[\we]_s^4}{\int_{\R^N}(I_{\mu}*|\we|^3)|\we|^3}+\frac{\sqrt{a}[\we]_s}{\sqrt{\int_{\R^N}(I_{\mu}*|\we|^3)|\we|^3}}.
 \end{align}
 On the other hand, we deduce 
 \begin{align*}
  \frac{e^{4s\twe}d_1^4}{\|\we\|_2^4}&=\frac{a[\we]_s^2}{\int_{\R^N}(I_{\mu}*|\we|^3)|\we|^3} +\frac{be^{2s\twe}d_1^2[\we]_s^4}{\|\we\|_2^2\int_{\R^N}(I_{\mu}*|\we|^3)|\we|^3}\\
  &-\alpha(\dpq)e^{(\dpq-2)s\twe}\frac{d_1^{p+q-2}\int_{\R^N}(I_{\mu}*|\we|^p)|\we|^q}{\|\we\|_2^{p+q-2}\int_{\R^N}(I_{\mu}*|\we|^3)|\we|^3}
  \\&\ge \frac{be^{2s\twe}d_1^2[\we]_s^4}{\|\we\|_2^2\int_{\R^N}(I_{\mu}*|\we|^3)|\we|^3}-\alpha(\dpq)e^{(\dpq-2)s\twe}\frac{d_1^{p+q-2}\int_{\R^N}(I_{\mu}*|\we|^p)|\we|^q}{\|\we\|_2^{p+q-2}\int_{\R^N}(I_{\mu}*|\we|^3)|\we|^3}\\
 \frac{e^{2s\twe}d_1^2}{\|\we\|_2^2}&\ge\frac{b[\we]_s^4}{\int_{\R^N}(I_{\mu}*|\we|^3)|\we|^3}-\alpha(\dpq)\frac{e^{(\dpq-4)s\twe}d_1^{p+q-4}\int_{\R^N}(I_{\mu}*|\we|^p)|\we|^q}{\|\we\|_2^{p+q-4}\int_{\R^N}(I_{\mu}*|\we|^3)|\we|^3}.
 \end{align*}
 
 By the inequality $(l_1+l_2)^{\theta}\le l_1^{\theta}+l_2^{\theta}$ for $l_1,~l_2>0$, $\theta\in (0,1]$ and \eqref{A30}, we deduce
\begin{align}\label{A211}
\notag\frac{e^{2s\twe}d_1^2}{\|\we\|_2^2}&\ge\frac{b[\we]_s^4}{\int_{\R^N}(I_{\mu}*|\we|^3)|\we|^3}-\alpha(\dpq)\frac{d_1^{p+q-4}\int_{\R^N}(I_{\mu}*|\we|^p)|\we|^q}{\|\we\|_2^{p+q-4}\int_{\R^N}(I_{\mu}*|\we|^3)|\we|^3}e^{2st_{\epsilon,\alpha}{\frac{(\dpq-4)}{2}}}\\
\notag&\ge\frac{b[\we]_s^4}{\int_{\R^N}(I_{\mu}*|\we|^3)|\we|^3}-\alpha(\dpq)\frac{d_1^{p+q-4}\int_{\R^N}(I_{\mu}*|\we|^p)|\we|^q}{\|\we\|_2^{p+q-4}\int_{\R^N}(I_{\mu}*|\we|^3)|\we|^3}\\&\notag\quad\times\Bigg(\frac{\|\we\|_2^2}{d_1^2}\Bigg( \frac{b[\we]_s^4}{\int_{\R^N}(I_{\mu}*|\we|^3)|\we|^3}\quad +\frac{\sqrt{a}[\we]_s}{\sqrt{\int_{\R^N}(I_{\mu}*|\we|^3)|\we|^3}} \Bigg)\Bigg)^{\frac{\dpq-4}{2}}\\
&\notag\ge\frac{b[\we]_s^4}{\int_{\R^N}(I_{\mu}*|\we|^3)|\we|^3}-\alpha(\dpq)\frac{d_1^{p+q-(\dpq)}\int_{\R^N}(I_{\mu}*|\we|^p)|\we|^q}{\|\we\|_2^{p+q-(\dpq)}\int_{\R^N}(I_{\mu}*|\we|^3)|\we|^3}\\&\quad\times\Bigg( \frac{b[\we]_s^4}{\int_{\R^N}(I_{\mu}*|\we|^3)|\we|^3}+\frac{\sqrt{a}[\we]_s}{\sqrt{\int_{\R^N}(I_{\mu}*|\we|^3)|\we|^3}}\Bigg)^{\frac{\dpq-4}{2}}.
\end{align}  
Therefore, we obtain
\begin{align*}  
e^{2s\twe}&\ge\frac{\|\we\|_2^2}{d_1^2}\left(K_1-\alpha K_2(\dpq)d_1^{p+q-(\dpq)}\frac{\int_{\R^N}(I_{\mu}*|\we|^p)|\we|^q }{\|\we\|_2^{p+q-(\dpq)}} \right)\\
&=\frac{\|\we\|_2^2}{d_1^2}\left(K_1-\alpha K_2(\dpq)d_1^{p+q-(\dpq)}f(\epsilon) \right)
\end{align*}
  
  where $K_1=K_1(b,\shl)>0$, $K_2=K_2(a,b,p,q,\shl)>0$ and with help of Lemma \ref{Ak3}, we define $f(\epsilon)$ as
  
  \begin{equation}\label{ar1}
  f(\epsilon):=\frac{\int_{\R^N}(I_{\mu}*|\we|^p)|\we|^q }{\|\we\|_2^{p+q-(\dpq)}}=    \begin{cases}
\textbf{Case 1: } 4s<N<6s, \\[4pt]
\quad
\begin{cases}
 \text{Constant}~~, & p>\tfrac{3}{2},\; q>\tfrac{3}{2},
\end{cases} \\[12pt]

\textbf{Case 2: } N=4s, \\[4pt]
\quad
\begin{cases}
|\log \epsilon|^{\tfrac{p+q-6}{2}}, & p>\tfrac{3}{2},\; q>\tfrac{3}{2}, \\[4pt]

|\log \epsilon|^{\tfrac{-3+\max\{p,q\}}{2}}, & \big(p>\tfrac{3}{2},\,q=\tfrac{3}{2}\big)\;\text{or}\;\big(p=\tfrac{3}{2},\,q>\tfrac{3}{2}\big),
\end{cases}\\[12pt]
\textbf{Case 3: } 2s<N<4s, \\[4pt]
\quad
\begin{cases}
\epsilon^{\tfrac{(N-2s)(4s-N)(6-(p+q))}{4s}}, & p>\tfrac{3}{2},\; q>\tfrac{3}{2}, \\[4pt]
\epsilon^{\tfrac{(N-2s)}{4s}\big(18s+N(p+q)-4s\max\{p,q\}-6N\big)}, & \big(p>\tfrac{3}{2},\,q<\tfrac{3}{2}\big)\;\text{or}\;\big(p<\tfrac{3}{2},\,q>\tfrac{3}{2}\big), \\[4pt]
\epsilon^{\tfrac{(N-2s)(9-2\max\{p,q\})(4s-N)}{8s}}\,|\log \epsilon|^{\frac{3(N-2s)}{2N}}, & \big(p>\tfrac{3}{2},\,q=\tfrac{3}{2}\big)\;\text{or}\;\big(p=\tfrac{3}{2},\,q>\tfrac{3}{2}\big).
\end{cases}
\end{cases}
  \end{equation}

   From \eqref{ar1}, together with the assumptions \textit{(i)-(v)} in Theorem \ref{AA2}  on $p,~q,~N$ and $\alpha$, we deduce
   \begin{equation*}
   e^{2s\twe}\ge \frac{\|\we\|_2^2K_1}{d_1^2},
   \end{equation*}
for sufficiently small $\epsilon>0$.\par 
Since $4+\frac{8s-2\mu}{N}<p+q<6$, we get
\begin{align}\label{A205}
\notag\sup_{k\in\R}\Phi_{\alpha}(k)&=\Phi_{\alpha}(\twe)=\Phi_0(\twe)-\alpha e^{s(\dpq)\twe}\frac{d_1^{p+q}}{\|\we\|_2^{p+q}}\int_{\R^N}(I_{\mu}*|\we|^p)|\we|^q\\
\notag&\le \sup_{k\in\R}\Phi_{0}(k)-\alpha e^{s(\dpq)\twe}\frac{d_1^{p+q}}{\|\we\|_2^{p+q}}\int_{\R^N}(I_{\mu}*|\we|^p)|\we|^q\\
\notag&= \Phi_{0}(t_{\epsilon,0})-\alpha e^{s(\dpq)\twe}\frac{d_1^{p+q}}{\|\we\|_2^{p+q}}\int_{\R^N}(I_{\mu}*|\we|^p)|\we|^q\\
\notag&\le \frac{2a\shl^3\zeta}{2}+\frac{b^3\shl^6\zeta^2}{12}+O(\epsilon^{\frac{N-2s}{2}})-\alpha\frac{K_1^{\frac{(\dpq)}{2}}d_1^{p+q-(\dpq)}\int_{\R^N}(I_{\mu}*|\we|^p)|\we|^q}{\|\we\|_2^{p+q-(\dpq)}}\\
&=\frac{2a\shl^3\zeta}{2}+\frac{b^3\shl^6\zeta^2}{12}+O(\epsilon^{\frac{N-2s}{2}})-\alpha K_1^{\frac{(\dpq)}{2}}d_1^{p+q-(\dpq)} f(\epsilon).
\end{align}
Again, using assumptions $(i)-(v)$ in Theorem \ref{AA2}, for sufficiently same $\epsilon$, we have
$$O(\epsilon^{\frac{N-2s}{2}})-\alpha K_1^{\frac{(\dpq)}{2}}d_1^{p+q-(\dpq)} f(\epsilon)<0.$$
Therefore, we conclude
\begin{align*}
\sup_{k\in\R}\Phi_{\alpha}(k)&<\frac{2a\shl^3\zeta}{2}+\frac{b^3\shl^6\zeta^2}{12}\\
&=\frac{ab\shl^3}{2}+\frac{b^3\shl^6}{12}+\frac{2}{3}\left(\frac{b^2\shl^4}{4}+a\shl \right)^{\frac{3}{2}},
\end{align*}
we get the desired result.
\end{proof} 
	\textbf{Proof of Theorem \ref{AA2}} Let  $\J^0$ represent the closed sub-level set $\{(u,v)\in S\D:\J\le 0 \}$. Define 
	\begin{equation*}
	\Gamma :=\{\gamma=(\vartheta,\beta)\in C([0,1],\R\times S_r\D ):\gamma(0)=(0,\overline{\mathcal{B}}_z),\gamma(1)\in (0,\J^0)\},
	\end{equation*}
	 with the associated minimax level
	$$\sigma_{\alpha}\D:=\inf_{\gamma\in \Gamma}\max_{(k,u,v)\in \gamma([0,1])}\J(k\star(u,v)).$$
	 Let $(u,v)\in S_r\D$. Since $[k\star u]_s^2+[k\star v]_s^2\to 0^+$ as $k\to -\infty$ and $\J(k\star(u,v))\to -\infty$ as $k\to +\infty$, there exists $k_0<<-1$ and $k_1>>1$, such that 
	$$\gamma_{(u,v)}:\tau \in [0,1] \mapsto (0,((1-\tau)k_0+\tau k_1)\star(u,v))\in \R\times S_r\D$$ 
is a path in $\Gamma$. Then $\sigma_{\alpha}\D$ is a real number. Now, for any $\gamma=(\vartheta,\beta)\in \Gamma$, we introduce a function 
$$K_{\gamma}:k\in [0,1]\mapsto \Pa(\vartheta(k)\star\beta(k))\in \R.$$

By Lemma \ref{a38} and Lemma \ref{c56}, we find that $K_{\gamma}(0)=\Pa(\beta(0))>0$. Moreover, since $\f(k)>0$ for each $k\in (-\infty,k_{\beta(1)})$
 and $\f(k_{\beta(1)})=\J(\beta(1))\le 0$, it follows that $k_{\beta(1)}<0.$
  Therefore, by Lemma \ref{a40}, we have $K_{\gamma}=\Pa(\beta(1))<0.$ Moreover, the map $\tau\mapsto \vartheta(\tau) \star \beta(\tau)$ is continuous from $[0,1]$ to $\h$, so we infer that there exists $\tau_{\gamma}\in (0,1)$ such that $K_{\gamma}(\tau_{\gamma})=0$. Consequently, $\vartheta(\tau_{\gamma})\star\beta(\tau_{\gamma})\in \mathcal{P}$, which implies that
  $$\max_{\gamma([0,1])}\J(k\star(u,v))\ge \J(\vartheta(\tau_{\gamma})\star\beta(\tau_{\gamma})\ge\sigma_{\alpha}\D\ge \inf_{\mathcal{P}\cap S_r\D}\J =m_{r,\alpha}\D.$$
  Therefore, $\sigma_{\alpha}\D\ge m_{r,\alpha}\D.$ On the other hand, if $(u,v)\in \mathcal{P}\cap S_{r}\D$, then $\gamma_{(u,v)}$ is a path in $\Gamma$ with 
  $$\J(u,v) =\max_{\gamma([0,1])}\J(k\star(u,v))\ge \sigma_{\alpha}\D,$$
which implies that
$$m_{r,\alpha}\ge\sigma_{\alpha}\D.$$
Combining this with Lemma \ref{a38}, we have
$$\sigma_{\alpha}\D=m_{r,\alpha}\D>\sup_{(\overline{\mathcal{B}}_z\cup\J^m)\cap S_r\D}\J=\sup_{((0,\overline{\mathcal{B}}_z)\cup(0,\J^m))\cap(\R\times S_r\D)}\J(k\star(u,v)).$$

Following the same approach as in \cite[Theorem 1.1(2)]{soave2020normalized2}, we obtain a Palais-Smale sequence $\{(u_n,v_n)\}\subset S_r\D$ for $\J|_{S\D}$, at a level $m_{\alpha,r}\D\in\left(0,\frac{ab\shl^3}{2}+\frac{b^3\shl^6}{12}+\frac{2}{3}\left(\frac{b^2\shl^4}{4}+a\shl \right)^{\frac{3}{2}}\right)$. This sequence has the properties that $P_{\alpha}(u_n,v_n)\to 0$, $u_n^-\to0$ and $v_n^-\to0$ a.e. in $\R^N$.

At this point, one of the two alternatives of Proposition \ref{a3} occurs. Suppose that alternative \textit{(i)} holds: there exists a nontrivial function $(\tilde{u},\tilde{v})\in\h$ such that $(u_n,v_n)\rightharpoonup(\tilde{u},\tilde{v})$ weakly (but not strongly) and $(\tilde{u},\tilde{v})$ is a solution of $(F_\alpha)$, and satisfies
\begin{equation*}
   \J(\tilde{u},\tilde{v})\le m_{r,\alpha}\D-\left(\frac{ab\shl^3}{2}+\frac{b^3\shl^6}{12}+\frac{2}{3}\left(\frac{b^2\shl^4}{4}+a\shl \right)^{\frac{3}{2}} \right)<0.\end{equation*}
But, since $P_{\alpha}(\tilde{u},\tilde{v})=0$ and $\dpq>4$, implies
\begin{align*}
    \J(\tilde{u},\tilde{v})=
\frac{a}{4}\left([\tilde{u}]_s^2 + [\tilde{v}]_s^2\right)
+ \alpha\left(\frac{\dpq}{4} - 1\right) \int_{\mathbb{R}^N} (I_{\mu} * |\tilde{u}|^p) |\tilde{v}|^q + \left(\frac{1}{2} - \frac{1}{\2}\right) \int_{\mathbb{R}^N} (I_{\mu} * |\tilde{u}|^{\2}) |\tilde{v}|^{\2} 
>0,
\end{align*}
a contradiction. It's evident that alternative \textit{(ii)} of Proposition \ref{a3} is clearly valid, leading us to our desired result.\qed

\printbibliography

\end{document}